\newtheorem{thm}{Theorem}[section]
\newtheorem{lem}[thm]{Lemma}
\newtheorem{prop}[thm]{Proposition}
\theoremstyle{remark} 
\newtheorem{rem}[thm]{Remark}
\theoremstyle{definition}
\numberwithin{equation}{section}
\newcommand{\dhook}{\stackrel{d}{\hookrightarrow}} 
\newcommand{\ddo}{\ddot{\text{o}}} 
\newcommand{\re}{\text{Re}} 
\newcommand{\Tone}{\mathbb{T}}
\newcommand{\hAlphaN}{\left( h^{\alpha}(\Tone) \right)^n}
\newcommand{\dT}{d_{\mathbb{T}}}
\begin{document}

\title[Elliptic Operators and Maximal Regularity]
{Elliptic Operators and Maximal Regularity on Periodic little-H\"older Spaces}
\author[Jeremy LeCrone]{Jeremy LeCrone}
\address{Department of Mathematics,
         Vanderbilt University,
         Nashville, TN~37240, USA}
\email{jeremy.lecrone@Vanderbilt.Edu}

\begin{abstract}
We consider one-dimensional inhomogeneous parabolic equations with higher-order elliptic differential operators subject to 
periodic boundary conditions. In our main result we show that the property of continuous maximal regularity is
satisfied in the setting of periodic little-H$\ddo$lder spaces, provided the coefficients of the differential operator
satisfy minimal regularity assumptions.
We address parameter-dependent elliptic equations, deriving invertibility and resolvent bounds which lead
to results on generation of analytic semigroups.
We also demonstrate that the techniques and results of the paper hold for elliptic differential
operators with operator-valued coefficients, in the setting of vector-valued functions.
\end{abstract}
\keywords{Elliptic Operators, Periodic Boundary Conditions, Maximal Regularity, Inhomogeneous Cauchy Problem, Fourier Multipliers}
\subjclass[2010]{Primary: 35G16, 35J30; Secondary: 35K59, 42A45}



\maketitle


\setcounter{section}{-1}
\section{Introduction}

In this paper we consider the following abstract periodic inhomogeneous equation
\begin{equation}\label{Eqn:CP}
\begin{cases}
\partial_t u(t,x) + \mathcal{A}(x,D)u(t,x) = f(t,x), & \text{$t > 0, \, x \in \mathbb{R}$}\\
u(0,x) = u_0(x), & \text{$x \in \mathbb{R}$},
\end{cases}
\end{equation}
where $\mathcal{A}(\cdot,D) := \sum_{k=0}^{2m} b_k(\cdot) D^k$ is a differential operator
of order $2m$, with variable coefficients $b_k: \mathbb{R} \rightarrow \mathbb{C}$. 
Further, we enforce periodic boundary conditions on the problem by assuming the given 
functions $u_0, b_k, f(t,\cdot)$, for $t \geq 0, \, k = 0, \ldots, 2m,$ are all $2 \pi$-periodic in $x \in \mathbb{R}$.
Hence, we will be looking for solutions $u(t,\cdot)$ which also exhibit $2 \pi$-periodicity on $\mathbb{R}$, for $t > 0$.
We will also consider the more general setting of 
vector-valued functions $u_0, f(t, \cdot), u(t, \cdot): \mathbb{R} \rightarrow E$,
and operator-valued coefficients $b_k: \mathbb{R} \rightarrow \mathcal{L}(E)$, 
for an arbitrary Banach space $E$ over $\mathbb{C}$. This more
general setting is discussed in Section~\ref{Section:VectorSetting}.

Understanding the nature of solutions (i.e. existence, uniqueness and
regularity) to inhomogeneous equations of this form is integral 
to the study of abstract quasilinear equations. In the quasilinear setting, 
we see that \eqref{Eqn:CP} takes the form
\[
\partial_t u + \texttt{A}(u,D) u = \texttt{F}(u),
\]
where the coefficients $b_k = b_k(u, u', \ldots, u^{(2m - 1)})$, and subsequently the 
differential operator $\texttt{A}$, may depend upon the solution $u$ and its
lower order derivatives $u^{(j)}, \, j \leq 2m - 1$. Meanwhile, the inhomogeneity $f$ takes
the form $\texttt{F}(u) = \texttt{F}(u, u', \ldots, u^{(2m - 1)})$, for some nonlinear mapping $\texttt{F}$. 
Several authors have studied abstract quasilinear equations, including \cite{Am93, AN90, CS01, KPW10, LUN95, Pru03}.  
Among the techniques employed to study quasilinear problems, 
the notion of maximal regularity has proven to be a valuable tool in establishing both
qualitative and quantitative results, c.f.
\cite{Am05, AN90, CS01, EMS98, KPW10, LUN95, Pru03, PSZ09, Sim95}. 
For a 
specific example of a quasilinear problem to which our results will apply, we reference the 
axisymmetric surface diffusion flow with periodic boundary conditions, 
as considered in \cite{BBW98}. 
In a forthcoming article, the results contained herein will be used to 
establish well-posedness, regularity and stability results for the 
periodic axisymmetric surface diffusion flow,
a one-dimensional fourth-order quasilinear problem.

Given an elliptic differential operator with periodic coefficients,
it is the goal of this paper to show that one can 
establish (continuous) maximal regularity results in the setting of periodic little-H$\ddo$lder spaces.
Moreover, we assume only minimal regularity conditions on the coefficients $b_k$, 
lending our results to applications in periodic quasilinear problems. In order to establish maximal 
regularity, we make use of a result originally proved by DaPrato and Grisvard \cite{DPG79},
which gives a construction of pairs of function spaces with
the property of continuous maximal regularity
for a given operator, under the assumption that the operator generates a strongly continuous analytic semigroup.
Hence, we focus first on showing generation of an analytic semigroup.

In fact, our results will show that elliptic operators with periodic coefficients 
generate analytic semigroups in the periodic H$\ddo$lder and little-H$\ddo$lder settings.
However, we focus on the results in the little-H$\ddo$lder space setting, because we get strong continuity
of the semigroups generated, due to density of embeddings in the little-H$\ddo$lder scale, a necessary condition
for applying the results of DaPrato and Grisvard.
To the best of the author's awareness, the work contained herein constitutes the first systematic treatment
of semigroup generation in the case of variable coefficients for elliptic operators with periodic boundary conditions. 
A related result for constant coefficients in the periodic setting was proved by Escher and Matioc \cite{EM08}, 
see also \cite{MAT09}, where
they considered a specific abstract operator of third order, in the periodic little-H$\ddo$lder setting.

In the process of establishing semigroup generation results, we consider the parameter-dependent 
elliptic equation
\[
(\lambda - \mathcal{A}(\cdot, D)) u = f, \qquad \lambda \in \mathbb{C},
\]
for which we show invertibility in the periodic H$\ddo$lder and little-H$\ddo$lder settings, provided 
$\re \, \lambda$ is sufficiently large. Additionally, we establish parameter-dependent estimates on
the resolvent of an elliptic operator under minimal regularity assumptions on the cefficients.
With invertibility and resolvent estimates, semigroup generation
follows from a standard result in semigroup theory, \cite{AM95} and \cite{FAT83}.
One will note that semigroup generation results are sufficient to derive well-posedness for the 
inhomogeneous problem \eqref{Eqn:CP} by classic semigroup techniques. However, as stated above, we focus on
establishing maximal regularity results, for which the little-H$\ddo$lder setting is desirable.

The paper is organized as follows. In the first section, we express regularity conditions for periodic functions on $\mathbb{R}$, exploiting a connection with functions defined over the one-dimensional torus $\Tone$, and establish necessary results regarding these spaces. In the second section we state and prove a Marcinkiewicz-type Fourier multiplier result, which is a slight generalization of a result in \cite{AB02}. In the third section we prove that a simplified operator $- \mathcal{A}_b$, with highest-order terms and constant coefficients, generates a (strongly continuous) analytic semigroup on periodic little-H$\ddo$lder spaces. In the fourth section, we extend this result to the principal part $-\mathcal{A}_p$, with highest-order terms and variable coefficients, using a partition technique as seen in \cite{AM01,AMHS94}. In the fifth section, we present a generation result for the full operator, discuss maximal regularity and solutions to the linear problem \eqref{Eqn:CP}. We conclude the paper by discussing the case of vector-valued functions and necessary modifications to our methods for results to carry over to this setting.

\section{Periodic Functions Over $\mathbb{R}$}\label{Section:PeriodicFunctions}

Given a $2 \pi$-periodic function $\tilde{f}: \mathbb{R} \rightarrow \mathbb{C}$ with some known regularity, we can restrict $\tilde{f}$ to an interval of periodicity (the interval $[-\pi, \pi]$, for instance) and the full function can still be recovered, i.e. the restricted function $f := \tilde{f}|_{[- \pi, \pi]}$ can be extended periodically to all of $\mathbb{R}$ and this extension will coincide exactly with $\tilde{f}$. Reversing this process, we want to start with a function $f: [ -\pi, \pi] \rightarrow \mathbb{C}$ and prescribe minimal conditions on $f$ so that the periodic extension exhibits desired regularity on $\mathbb{R}$. 
In this section, we characterize several regularity classes for periodic functions with respect to their properties on the interval $[-\pi,\pi]$.

Let $\Tone := [- \pi, \pi]$, where the points $\pi$ and $-\pi$ are identified; we denote this point by $\{ \pi, - \pi \}$. Endow $\Tone$ with the metric topology $\tau$ generated by the metric
\[
\dT: \Tone \times \Tone \rightarrow \mathbb{R} \qquad \dT(x,y) := |x - y| \wedge (2\pi - |x - y|), \quad \text{where } a \wedge b := \min \{a,b\}.
\]
(For computational purposes, we follow the convention $\{ \pi, - \pi \} = \pi$ so that\\ $\dT (x, \{ \pi, - \pi \}) = | x - \pi | \wedge (2 \pi - |x - \pi|)$ and $\{ \pi, - \pi \} \geq x$ for all $x \in \Tone$.) Notice that $(\Tone, \tau)$ is a topological group which is isomorphic to the quotient group $\mathbb{R} / 2 \pi \mathbb{Z}$ endowed with the quotient topology. We see, moreover, that $\Tone$ is a complete, compact, metric space and we denote open balls of $\Tone$ by 
\[
\mathbb{B}_{\Tone}(x,\varepsilon) := \{ y \in \Tone : \dT(x,y) < \varepsilon \} \qquad \varepsilon > 0, \; x \in \Tone.
\]
We will demonstrate how using this metric gives intrinsic regularity conditions for functions 
defined on $\Tone$, which can be naturally extended periodically to $\mathbb{R}$. 

\subsection{Regularity on $\Tone$}\label{SubSec:Regularity}

Given a function $f: \Tone \rightarrow \mathbb{C}$, we define its periodic extension
\[
\tilde{f}(x) := f(x - 2 \pi k) \qquad \text{for} \quad x \in [\pi (2k - 1), \pi (2k + 1)], \quad k \in \mathbb{Z}.
\]
Denote by $\phi$ the periodic extension operator taking $f$ defined on $\Tone$ to $\tilde{f}$ defined on $\mathbb{R}$. One can immediately see that $\phi$ is bijective from $ \mathbb{C}^{\Tone}$ to $\left( \mathbb{C}^{\mathbb{R}} \right)_{per}$, the space of $2 \pi$-periodic functions on $\mathbb{R}$. Now, we define spaces of regular periodic functions over $\mathbb{R}$ into which we want $\phi$ to map.

Denote by $C_{per}(\mathbb{R})$ and $C_{per}^k(\mathbb{R})$ the spaces of $2 \pi$-periodic functions over $\mathbb{R}$ which are continuous and $k$-times continuously differentiable, respectively, for $k \in \mathbb{N}_0 := \mathbb{N} \cup \{ 0 \}$; we take $C^0_{per}(\mathbb{R}) = C_{per}(\mathbb{R})$ by convention. Each is a closed subspace of the corresponding non-periodic spaces and are Banach spaces when equipped with the following norms
\begin{equation}
\| f \|_{C(\mathbb{R})} := \sup_{x \in \mathbb{R}} |f(x)|, \qquad
\| f \|_{C^k(\mathbb{R})} := \sum_{j = 0}^k \| f^{(j)} \|_{C(\mathbb{R})}.
\end{equation}
Moreover, for $\alpha \in (0,1)$ and $k \in \mathbb{N}_0$, we define the space of H$\ddo$lder continuous functions $C^{k + \alpha}_{per}(\mathbb{R})$  to be those functions $f \in C^{k}_{per}(\mathbb{R})$ such that 
\[
[ f^{(k)} ]_{\alpha, \mathbb{R}} := \sup_{\substack{x,y \in \mathbb{R}\\ x \not= y}} \frac{|f^{(k)}(x) - f^{(k)}(y)|}{|x - y|^{\alpha}} < \infty.
\]
We call $[ \cdot ]_{\alpha, \mathbb{R}}$ the $\alpha$-H$\ddo$lder seminorm over $\mathbb{R}$ and one can see that $C^{k + \alpha}_{per}(\mathbb{R})$ is a Banach space with the norm
\begin{equation}\label{Eqn:HolderSpaceNorm}
\| f \|_{C^{k + \alpha}(\mathbb{R})} := \| f \|_{C^k(\mathbb{R})} + [ f^{(k)} ]_{\alpha, \mathbb{R}}.
\end{equation} 
For simplicity of notation, given $\theta \in \mathbb{R}_+$, we define $C^{\theta}_{per}(\mathbb{R}) := C^{\lfloor \theta \rfloor + \{ \theta \}}_{per}(\mathbb{R})$, where $\lfloor \theta \rfloor$ denotes the largest integer not exceeding $\theta$ and $\{ \theta \} := \theta - \lfloor \theta \rfloor$. 

With the periodic spaces over $\mathbb{R}$ established we define the spaces over $\Tone$ as follows. For $\theta \in \mathbb{R}_+$ let 
\begin{equation}
C^{\theta}(\Tone) := \left\{f \in \mathbb{C}^{\Tone} : \phi(f) \in C^{\theta}_{per}(\mathbb{R}) \right\} \qquad \text{with} \quad \| f \|_{C^{\theta}(\Tone)} := \| \phi(f) \|_{C^{\theta}(\mathbb{R})}.
\end{equation}
It follows immediately that $C^{\theta}(\Tone)$ is a Banach space and $\phi$ is a linear isometric isomorphism from $C^{\theta}(\Tone)$ to $C^{\theta}_{per}(\mathbb{R})$. Further, if $\theta \geq 1$ and $f \in C^{\theta}(\Tone)$, we define the derivative $f' \in \mathbb{C}^{\Tone}$ by $f' := \phi^{-1}( \phi(f)') = \left( \frac{d}{dx}\tilde{f} \right) \big|_{\Tone} \;$.

It is interesting to note that continuity, differentiability and H$\ddo$lder continuity can all be defined intrinsically on $\Tone$, making use of an ordered adaptation of the metric $\dT$, such that $\phi$ remains a linear isomorphism. Intrinsic definitions of regularity provide a different perspective for functions over the periodic domain $\Tone$ and setting for regularity independent of periodic extensions. Although the connection between functions over $\Tone$ and periodic functions over $\mathbb{R}$ has been used widely in the literature (c.f. \cite{AB04,EM08,ST87}), little attention has been paid to the local conditions and geometry on $\Tone$, which are important to the partition argument that we use in Section~\ref{Section:PartitionAndGeneration}. We will state some of the results regarding this intrinsic viewpoint that will be of use later in the paper, in particular we state equivalent definitions for (H$\ddo$lder) continuity over $\Tone$ and an application of the Mean Value theorem. For simplicity of notation, we denote by $\dT^{\alpha}(\cdot,\cdot)$ the quantity $\dT (\cdot,\cdot)^{\alpha}$.

\begin{prop}\label{Prop:IntrinsicDef} Let $f \in \mathbb{C}^{\Tone}$, then
\begin{enumerate}
	\item $f \in C(\Tone)$ if and only if $f$ is continuous in the metric topology $\tau$.
	\item for $\alpha \in (0, 1)$, $f \in C^{\alpha}(\Tone)$ if and only if $\displaystyle [ f ]_{\alpha, \Tone} := \sup_{\substack{x, y \in \Tone\\ x \not= y}} \frac{|f(x) - f(y)|}{\dT^{\alpha}(x,y)} < \infty.$\\ \vspace{.2em}
	Moreover, $[ f ]_{\alpha, \Tone} = [ \tilde{f} ]_{\alpha, \mathbb{R}}$ in this case.
	\item if $f \in C^1(\Tone)$ and $x, y \in \Tone$, then $|f(x) - f(y)| \leq \| f' \|_{C(\Tone)} \dT(x,y)$.
\end{enumerate}
\end{prop}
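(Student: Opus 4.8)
The plan is to prove each of the three statements by transferring between the intrinsic condition on $\Tone$ and the corresponding condition for the periodic extension $\tilde{f} = \phi(f)$ on $\mathbb{R}$, using the key geometric fact that for $x,y$ lying in a common interval of length $2\pi$ one has $\dT(x,y) = |x-y|$, while in general $\dT(x,y) \le |x-y|$ and $\dT(x,y) = \dT(x+2\pi k, y+2\pi \ell)$ for all integers $k,\ell$. For part a), one direction is essentially the definition combined with Proposition statements about $\phi$: if $f \in C(\Tone)$ then $\tilde{f} \in C_{per}(\mathbb{R})$, and since $\phi$ restricted to the open interval $(-\pi,\pi)$ is a homeomorphism onto its image and $\Tone$ with $\tau$ is locally homeomorphic to $\mathbb{R}$ near the identified point $\{\pi,-\pi\}$ (via $\dT$), continuity of $\tilde{f}$ at every real point pulls back to $\tau$-continuity of $f$; the converse is the same argument run backwards, the only subtlety being to check continuity at $\{\pi,-\pi\}$, where one uses that a small $\dT$-ball around $\{\pi,-\pi\}$ corresponds under $\phi$ to a union of two half-neighborhoods of $\pi$ and $-\pi$ in $\mathbb{R}$, glued by periodicity.

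For part b), I would argue the equivalence and the seminorm identity simultaneously. First suppose $[f]_{\alpha,\Tone} < \infty$; I want to show $[\tilde{f}]_{\alpha,\mathbb{R}} \le [f]_{\alpha,\Tone}$. Given $x,y \in \mathbb{R}$ with $x \ne y$, translate by a multiple of $2\pi$ so that, say, $x \in [-\pi,\pi]$; if $y$ also lies in $[-\pi,\pi]$ then $|\tilde f(x) - \tilde f(y)| = |f(x)-f(y)| \le [f]_{\alpha,\Tone}\,\dT^\alpha(x,y) \le [f]_{\alpha,\Tone}\,|x-y|^\alpha$ since $\dT \le |\cdot|$. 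If $y \notin [-\pi,\pi]$, then $|x - y| \ge$ the distance from $y$ to the nearest of $\pm\pi$, and in particular $|x-y|^\alpha$ dominates $\dT^\alpha(x, y \bmod 2\pi)$ — more carefully, one shows $\dT(x,y) \le |x-y|$ always, so the bound persists. Conversely, if $[\tilde f]_{\alpha,\mathbb{R}} < \infty$, take $x,y \in \Tone$; choose representatives in $\mathbb{R}$ realizing $\dT(x,y) = |x - y|$ (always possible: pick $x \in [-\pi,\pi]$ and then the representative of $y$ among $y, y\pm 2\pi$ closest to $x$), whence $|f(x) - f(y)| = |\tilde f(x) - \tilde f(y)| \le [\tilde f]_{\alpha,\mathbb{R}}\,|x-y|^\alpha = [\tilde f]_{\alpha,\mathbb{R}}\,\dT^\alpha(x,y)$. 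Combining the two inequalities gives both the equivalence $f \in C^\alpha(\Tone) \iff [f]_{\alpha,\Tone} < \infty$ and the identity $[f]_{\alpha,\Tone} = [\tilde f]_{\alpha,\mathbb{R}}$.

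For part c), I would reduce to the classical Mean Value inequality on $\mathbb{R}$. Given $f \in C^1(\Tone)$ and $x,y \in \Tone$, pick representatives $\hat x, \hat y \in \mathbb{R}$ with $|\hat x - \hat y| = \dT(x,y)$ as above; since $\dT(x,y) \le \pi$, the two representatives lie in a common interval on which $\tilde f$ is $C^1$, so the ordinary Mean Value theorem gives $|\tilde f(\hat x) - \tilde f(\hat y)| \le \|\tilde f'\|_{C(\mathbb{R})}\,|\hat x - \hat y|$; then translate back using $f(x) = \tilde f(\hat x)$, $f(y) = \tilde f(\hat y)$, $f' = \phi^{-1}(\tilde f')$ and $\|f'\|_{C(\Tone)} = \|\tilde f'\|_{C(\mathbb{R})}$ to conclude $|f(x)-f(y)| \le \|f'\|_{C(\Tone)}\,\dT(x,y)$.

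The only genuine obstacle in any of this is bookkeeping around the identified point $\{\pi,-\pi\}$ and making the "choose the closest representative" step precise — i.e. verifying carefully that for every pair $x,y \in \Tone$ there exist lifts to $\mathbb{R}$ whose ordinary distance equals $\dT(x,y)$, and that such lifts lie in a single fundamental domain of length $2\pi$ so that periodicity and the classical one-variable results apply verbatim. Everything else is a routine unwinding of the isometric isomorphism $\phi$ and the inequality $\dT(x,y) \le |x-y|$, which I would state once as a short lemma-style observation at the start of the proof and then invoke repeatedly.
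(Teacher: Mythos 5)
Your proposal is correct and follows essentially the same route as the paper: everything is transferred through the periodic extension $\phi$, with part b) proved by a two-sided comparison of the seminorms and part c) by the one-variable mean value inequality applied to $\tilde f$. The only cosmetic difference is that you package the case analysis as a single observation ($\dT \le |\cdot|$ on lifts, plus the existence of representatives with $|\hat x - \hat y| = \dT(x,y)$), whereas the paper writes out the corresponding two or three cases explicitly; the underlying computation is the same.
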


\begin{proof}
{\em a)} Follows from direct computation.

{\em b)} First, assume $f \in C^{\alpha}(\Tone)$ and let $x, y \in \Tone$ such that $x \not= y$, without loss of generality assume $x < y$ (recalling the convention the $\{ \pi, - \pi \} \geq x$ for all $x \in \Tone$). By definition of the metric $\dT$, we see that $\dT(x,y)$ is either equal to $|x-y|$ or $|(x + 2 \pi) - y|$. Now, we examine both cases
\begin{itemize}
	\item if $\dT(x,y) = |x - y|$, then $\displaystyle 	\frac{|f(x) - f(y)|}{\dT^{\alpha}(x,y)} = \frac{|\tilde{f}(x)-\tilde{f}(y)|}{|x - y|^{\alpha}} \leq [\tilde{f}]_{\alpha,\mathbb{R}}, $
	\item if $\dT(x,y) = |(x + 2 \pi) - y|$, then, by periodicity of $\tilde{f}$,
	\[
	\frac{|f(x) - f(y)|}{\dT^{\alpha}(x,y)} = \frac{|\tilde{f}(x)-\tilde{f}(y)|}{|(x + 2 \pi) - y|^{\alpha}} = \frac{|\tilde{f}(x + 2 \pi ) - \tilde{f}(y)|}{|(x + 2 \pi) - y|^{\alpha}} \leq [\tilde{f}]_{\alpha,\mathbb{R}}.
	\]
\end{itemize}
Hence, $[ f ]_{\alpha, \Tone} \leq [ \tilde{f} ]_{\alpha, \mathbb{R}} < \infty$. Conversely, assume that $[ f ]_{\alpha, \Tone} < \infty$ and consider $x, y \in \mathbb{R}$, $x < y$. Here we consider the following three cases
\begin{itemize}
	\item if $|x - y| \leq \pi$ and there exists $k \in \mathbb{Z}$ so that $x, y \in [\pi (2k - 1), \pi (2k + 1)]$. Then we have $(x - 2 \pi k), (y - 2 \pi k) \in \Tone, \; |x - y| = \dT((x - 2 \pi k), (y - 2 \pi k))$ and
	\[
	\frac{|\tilde{f}(x) - \tilde{f}(y)|}{|x - y|^{\alpha}} = \frac{|f(x - 2 \pi k) - f(y - 2 \pi k)|}{\dT^{\alpha}((x - 2 \pi k),(y - 2 \pi k))} \leq [ f ]_{\alpha, \Tone}.
	\]
	
	\item if $|x - y| \leq \pi$ and there exists $k \in \mathbb{Z}$ so that $x \in [\pi (2k - 1), \pi (2k + 1)]$ and $y \in (\pi (2k + 1), \pi (2k + 3)]$. Then we have that $(x - 2 \pi k), (y - 2 \pi (k + 1)) \in \Tone, \; |x - y| = \dT((x - 2 \pi k), (y - 2 \pi (k + 1)))$ and
	\[
	\frac{|\tilde{f}(x) - \tilde{f}(y)|}{|x - y|^{\alpha}} = \frac{|f(x - 2 \pi k) - f(y - 2 \pi (k + 1))|}{\dT^{\alpha}((x - 2 \pi k),(y - 2 \pi (k + 1) ))} \leq [ f ]_{\alpha, \Tone}.
	\]
	
	\item if $|x - y| > \pi$, then we can find $l \in \mathbb{Z}$ so that $|(x + 2 \pi l) - y| \leq \pi$. Then we have, taking advantage of the periodicity of $\tilde{f}$ and the fact that $|x - y| \geq |(x + 2 \pi l) - y|$, that
	\[
	\frac{|\tilde{f}(x) - \tilde{f}(y)|}{|x - y|^{\alpha}} \leq \frac{|\tilde{f}(x + 2 \pi l) - \tilde{f}(y)|}{|(x + 2 \pi l) - y|^{\alpha}} \leq [ f ]_{\alpha, \Tone},
	\]
	where the last inequality follows from the previous two cases.
\end{itemize}
Therefore, we can see that $[ \tilde{f} ]_{\alpha, \mathbb{R}} \leq [ f ]_{\alpha, \Tone} < \infty$, so $f \in C^{\alpha}(\Tone)$ and the claim follows. Moreover, we see that $[ f ]_{\alpha, \Tone} = [ \tilde{f} ]_{\alpha, \mathbb{R}}$.

{\em c)} Fix $f \in C^1(\Tone), \; x, y \in \Tone$ and assume, without loss of generality, that $x \leq y$. As before, it follows that $\dT(x,y)$ equals either $|x - y|$ or $|(x + 2 \pi) - y|$. We consider these two cases separately and see that the claim holds;
\begin{itemize}
	\item if $\dT(x,y) = |x - y|$, then  $\displaystyle |f(x) - f(y)| = |\tilde{f}(y) - \tilde{f}(x)| = \left| \int_x^y \tilde{f}'(t) dt \right|$\\
	$\displaystyle \leq \| \tilde{f}' \|_{C(\mathbb{R})}|y - x| = \| f' \|_{C(\Tone)} \dT(x,y).$
	\item if $\dT(x,y) = |(x + 2 \pi) - y|$, then  $\displaystyle	|f(x) - f(y)| = |\tilde{f}(x + 2 \pi) - \tilde{f}(y)| = \left| \int_y^{x + 2 \pi} \tilde{f}'(t) dt \right| \leq \| \tilde{f}' \|_{C(\mathbb{R})} |(x + 2 \pi) - y| = \| f' \|_{C(\Tone)} \dT(x,y).$
\end{itemize}
\end{proof}

Finally, we define the so-called little-H$\ddo$lder spaces over $\mathbb{R}$ and $\Tone$. We discuss equivalent characterizations and results on little-H$\ddo$lder spaces, important for maximal regularity and generation of analytic semigroups. For $\theta \in \mathbb{R}_+ \setminus \mathbb{Z}$ define the periodic little-H$\ddo$lder spaces over $\mathbb{R}$ as 
\begin{equation*}
h^{\theta}_{per}(\mathbb{R}) := \left\{ f \in C^{\theta}_{per}(\mathbb{R}) : \lim_{\delta \rightarrow 0} \sup_{\substack{x, y \in \mathbb{R}\\ 0 < |x - y| < \delta}} \frac{|f^{\lfloor \theta \rfloor}(x) - f^{\lfloor \theta \rfloor}(y)|}{|x - y|^{\{ \theta \}}} = 0 \right\}.
\end{equation*}
Then, $h^{\theta}_{per}(\mathbb{R})$ is a closed subspace of $C^{\theta}_{per}(\mathbb{R})$ and likewise a Banach space with the inherited norm $\| \cdot \|_{C^{\theta}(\mathbb{R})},$ defined by \eqref{Eqn:HolderSpaceNorm}. Moreover, it follows that the little-H$\ddo$lder spaces are, in fact, Banach algebras, in both the periodic and non-periodic settings. Now, we define $h^{\theta}(\Tone) := \{ f \in \mathbb{C}^{\Tone}: \phi(f) \in h^{\theta}_{per}(\mathbb{R}) \},$ for $\theta \in \mathbb{R}_+ \setminus \mathbb{Z}$. Following Proposition~\ref{Prop:IntrinsicDef}, one easily verifies that an equivalent definition is
\begin{equation}\label{IntrisicLittleHolder}
h^{\theta}(\Tone) := \left\{ f \in C^{\theta}(\Tone): \lim_{\delta \rightarrow 0} \sup_{\substack{x, y \in \Tone\\ 0 < \dT(x,y) < \delta}} \frac{|f^{\lfloor \theta \rfloor}(x) - f^{\lfloor \theta \rfloor}(y)|}{\dT^{\{ \theta \}}(x,y)} = 0 \right\}.
\end{equation}
Little-H$\ddo$lder spaces have been studied by several authors in context with analytic semigroups and maximal regularity, c.f. \cite{CS01,EM08,EMS98,LUN95}. The proposition that follows demonstrates two properties of little-H$\ddo$lder spaces which make them a natural choice for maximal regularity results. Before we state these results, we make a couple of comments on notation. 

If $E$ and $F$ are Banach spaces, we say that $E$ is continuously embedded in $F$, denoted $E \hookrightarrow F$, if there exists a continuous injective operator $i: E \rightarrow F$. Moreover, we say $E$ is densely embedded in $F$, denoted $E \dhook F$, if $i(E) \subset F$ is dense. Further, let $(\cdot, \cdot)_{\eta} := ( \cdot, \cdot)_{\eta, \infty}^0$ denote the continuous interpolation functor of Da Prato and Grisvard, with exponent $\eta \in (0,1)$, see \cite{AM95, LUN95} for reference. The following proposition is the periodic analog of well-known results on little-H$\ddo$lder spaces over $\mathbb{R}$, c.f. \cite{LUN95}.

\begin{prop}\label{Thm:hAlphaCAlphaClosure} \hfill
\begin{enumerate}
	\item For $\theta \in \mathbb{R}_+ \setminus \mathbb{Z}$ and $\sigma \in (\theta, \infty]$, $h^{\theta}(\Tone)$ is the closure of $C^{\sigma}(\Tone)$ in\\ $(C^{\theta}(\Tone), \| \cdot \|_{C^{\theta}(\Tone)})$. Hence, $h^{\sigma}(\Tone) \dhook h^{\theta}(\Tone)$ for $\sigma \in (\theta,\infty) \setminus \mathbb{Z}$. 
	\item For $\theta_1, \theta_2 \in \mathbb{R}_+ \setminus \mathbb{Z}$ with $\theta_2 \geq \theta_1$, it follows that\\ $(h^{\theta_1}(\Tone),h^{\theta_2}(\Tone))_{\eta} = h^{\eta \theta_2 + (1 - \eta)\theta_1}(\Tone)$, provided $(\eta \theta_2 + (1 - \eta)\theta_1) \notin \mathbb{Z}$.
\end{enumerate}
\end{prop}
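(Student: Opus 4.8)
The plan is to reduce both statements to their well-known non-periodic analogues over $\mathbb{R}$ via the isometric isomorphism $\phi$, using the fact — established in the preceding discussion — that $\phi$ maps $C^\theta(\Tone)$ isometrically onto $C^\theta_{per}(\mathbb{R})$ and $h^\theta(\Tone)$ onto $h^\theta_{per}(\mathbb{R})$, and that $C^\theta_{per}(\mathbb{R})$ is a closed subspace of $C^\theta(\mathbb{R})$.

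For part (a), first I would recall the classical fact (see \cite{LUN95}) that $h^\theta(\mathbb{R})$ is the closure of $C^\sigma(\mathbb{R})$ in $C^\theta(\mathbb{R})$, for $\sigma \in (\theta,\infty]$, and more precisely that smooth functions (or $C^\infty$ functions of bounded derivatives) are dense in $h^\theta(\mathbb{R})$. The periodic version requires checking two things: that the approximating sequence can be taken $2\pi$-periodic, and that the closure operation commutes with passing to the periodic subspace. The cleanest route is mollification: given $f \in h^\theta_{per}(\mathbb{R})$, the mollifications $f_\varepsilon := f * \rho_\varepsilon$ are again $2\pi$-periodic, lie in $C^\infty_{per}(\mathbb{R}) \subset C^\sigma_{per}(\mathbb{R})$, and converge to $f$ in $C^\theta(\mathbb{R})$ precisely because the little-Hölder condition forces the Hölder seminorm of $f_\varepsilon - f$ to vanish as $\varepsilon \to 0$ (this is the standard argument: split the difference quotient into small and large scales, control the small scale by the little-Hölder modulus of $f$ and the large scale by $\|f_\varepsilon - f\|_\infty \to 0$). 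This shows $C^\sigma_{per}(\mathbb{R})$, hence $C^\sigma(\Tone)$, is dense in $h^\theta(\Tone)$. Conversely, one checks $h^\theta_{per}(\mathbb{R})$ is closed in $C^\theta(\mathbb{R})$ — immediate from the definition, since a uniform limit of functions whose Hölder moduli vanish uniformly also has vanishing Hölder modulus — and that $C^\sigma(\Tone) \subset h^\theta(\Tone)$ because any $C^\sigma_{per}$ function has $f^{(\lfloor\theta\rfloor)}$ of class $C^{\{\theta\}+\epsilon}$ for small $\epsilon > 0$ when $\sigma > \theta$, whose difference quotient of order $\{\theta\}$ is $O(|x-y|^\epsilon) \to 0$; when $\sigma = \infty$ this is clear. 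The dense embedding $h^\sigma(\Tone) \dhook h^\theta(\Tone)$ then follows since $C^\tau(\Tone) \subset h^\sigma(\Tone) \subset h^\theta(\Tone)$ for any $\tau \in (\sigma,\infty)\setminus\mathbb{Z}$, and the first set is already dense in $h^\theta(\Tone)$ by what was just shown.

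For part (b), I would again transfer to $\mathbb{R}$ and invoke the known continuous-interpolation identity $(h^{\theta_1}(\mathbb{R}), h^{\theta_2}(\mathbb{R}))_\eta = h^{\eta\theta_2 + (1-\eta)\theta_1}(\mathbb{R})$ from \cite{LUN95}. The subtlety is that interpolation of the periodic subspaces need not automatically coincide with the periodic subspace of the interpolation space. To handle this I would use that the periodic extension/restriction pair gives a \emph{retraction–coretraction}: the restriction operator $r: C^\theta(\mathbb{R}) \to C^\theta(\Tone)$ (restrict to $[-\pi,\pi]$ and extend periodically, i.e. $\phi^{-1} \circ$ periodization) together with $\phi$ realizes $C^\theta(\Tone)$ as a complemented subspace of $C^\theta(\mathbb{R})$, compatibly across the scale in $\theta$. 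By the standard fact that continuous interpolation functors commute with retractions, $(h^{\theta_1}(\Tone), h^{\theta_2}(\Tone))_\eta$ is the image under the retraction of $(h^{\theta_1}(\mathbb{R}), h^{\theta_2}(\mathbb{R}))_\eta = h^{\theta_3}(\mathbb{R})$ with $\theta_3 := \eta\theta_2 + (1-\eta)\theta_1$, which is exactly $h^{\theta_3}(\Tone)$; one must check the retraction is bounded on each $h^\theta(\mathbb{R})$, which holds because periodization does not increase Hölder constants and preserves the little-Hölder property. The hypothesis $\theta_3 \notin \mathbb{Z}$ is needed so that $h^{\theta_3}$ is defined on the nose.

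The main obstacle I anticipate is part (b): making the retraction–coretraction argument fully rigorous requires verifying that a single pair of operators works simultaneously on the whole Hölder scale and restricts correctly to the little-Hölder subspaces, and that the interpolation functor of Da Prato–Grisvard indeed commutes with such retractions — a point that is standard but must be cited carefully (cf. \cite{AM95, LUN95}). In part (a) the only delicate computation is the little-Hölder convergence of mollifications, which is routine but must be done with the correct two-scale splitting.
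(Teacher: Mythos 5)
Part (a) of your proposal is correct and is essentially the paper's own argument: the paper disposes of (a) by noting that Lunardi's proof (Proposition 0.2.1 of the cited monograph) runs verbatim in the periodic setting because the approximating functions are convolutions with mollifiers, and a convolution of a $2\pi$-periodic function with any kernel is again $2\pi$-periodic; your two-scale mollification argument is exactly this, with the routine details (closedness of $h^{\theta}_{per}$, the inclusion $C^{\sigma}\subset h^{\theta}$) filled in.

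For part (b) you take a genuinely different route --- transfer to $\mathbb{R}$ via a retraction--coretraction pair and quote the known identity $(h^{\theta_1}(\mathbb{R}),h^{\theta_2}(\mathbb{R}))_{\eta}=h^{\eta\theta_2+(1-\eta)\theta_1}(\mathbb{R})$ --- whereas the paper again simply reruns Lunardi's proof (Theorem 1.2.17) in the periodic setting, observing that the smoothing kernels $\varphi_t$ appearing there, convolved with periodic $f$, produce periodic functions. Your route is viable in principle, but the retraction you actually write down does not exist as stated: ``restrict to $[-\pi,\pi]$ and extend periodically'' is not a bounded map from $C^{\theta}(\mathbb{R})$ (or $h^{\theta}(\mathbb{R})$) into $C^{\theta}(\Tone)$, since a non-periodic $g$ generally has $g(-\pi)\neq g(\pi)$, so its periodization is not even continuous at $\{\pi,-\pi\}$ --- take $g(x)=\sin(x/2)$, smooth on $\mathbb{R}$, whose restriction-and-periodic-extension jumps there. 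Hence your claim that ``periodization does not increase H\"older constants and preserves the little-H\"older property'' fails for that operator, and this is precisely the step carrying all of (b). The gap is repairable: choose $\psi\in C_c^{\infty}(\mathbb{R})$ with $\sum_{k\in\mathbb{Z}}\psi(x+2\pi k)\equiv 1$ and set $Rg:=\phi^{-1}\bigl(\sum_{k\in\mathbb{Z}}(\psi g)(\cdot+2\pi k)\bigr)$; this is bounded on each $C^{\theta}(\mathbb{R})$ and $h^{\theta}(\mathbb{R})$ (the sum is locally finite and multiplication by $\psi$ preserves the little-H\"older condition), its image is periodic, and $R\circ\phi=\mathrm{id}$ because the translates of $\psi$ sum to one. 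With this corrected pair, functoriality of the Da Prato--Grisvard functor under retractions does yield $(h^{\theta_1}(\Tone),h^{\theta_2}(\Tone))_{\eta}=h^{\eta\theta_2+(1-\eta)\theta_1}(\Tone)$ up to equivalent norms. Compared with the paper's route, yours buys a black-box use of the non-periodic interpolation identity at the price of constructing and verifying the retraction across the whole scale; the paper's observation that every kernel in Lunardi's proof respects periodicity avoids that construction entirely.
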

\begin{proof}[Remarks on Proof:]
{\em a)} The proof of this statement is identical to the non-periodic case and can be found in Lunardi, \cite[Proposition 0.2.1]{LUN95}. We remark that the approximating functions from $C^{\infty}(\mathbb{R})$ established in Lunardi's proof, which are convolutions with smooth approximations of the identity, are in $C^{\infty}_{per}(\mathbb{R})$ in the periodic case. This fact follows from a property of convolutions involving periodic functions. Namely, given a function $\varphi$ and a $2 \pi$-periodic function $f$ such that the convolution $f \ast \varphi$ is well-defined, then the convolution is periodic, as
\[
(f \ast \varphi)(x + 2 \pi) = \int_{\mathbb{R}} f((x + 2 \pi) - y) \varphi(y) dy = \int_{\mathbb{R}} f(x - y) \varphi(y) dy = (f \ast \varphi)(x).
\]

{\em b)} The proof of this statement is identical to the non-periodic case, as demonstrated in \cite[Theorem 1.2.17]{LUN95}. Again, this method applies to the periodic case because we consider convolutions of smoothing kernels $\varphi_t$ with periodic functions $f$ over $\mathbb{R}$. Hence, the resulting convolutions are contained in $C^{\infty}_{per}(\mathbb{R})$.
\end{proof}

\subsection{Periodic Besov Spaces}

In order to state the Fourier multiplier theorem upon which our generation results heavily rely, we must first introduce the scale of Sobolev and Besov spaces. We present here a definition of periodic Besov spaces with respect to dyadic-type decompositions, similar to the development in \cite{AB04}, for more details on these spaces, and equivalent definitions, see Triebel and Schmeisser \cite[Section 3.5]{ST87}.

Following the notation of Arendt and Bu \cite{AB04}, let $\mathcal{D}(\Tone)$ denote the space $C^{\infty}(\Tone)$ equipped with the locally convex topology generated by the family of semi-norms $\| f \|_k := \| f^{(k)} \|_{C(\Tone)},$ for $k \in \mathbb{N}_0$. We define the space of periodic distributions $\mathcal{D}'(\Tone) := (\mathcal{D}(\Tone))^*$, the set of all bounded linear functionals on $\mathcal{D}(\Tone),$ and we equip $\mathcal{D}'(\Tone)$ with the weak-star topology over $\mathcal{D}(\Tone)$. Now we will investigate how the Fourier transform interacts with these spaces.

Denote by $e_k$ the function $[x \mapsto e^{ikx}]: \Tone \rightarrow \mathbb{C}$, then $e_k \in \mathcal{D}(\Tone)$ for $k \in \mathbb{Z}$. For $T \in \mathcal{D}'(\Tone)$, we define the \emph{Fourier coefficients} $\hat{T}(k) := \langle T, e_{-k} \rangle$, where $\langle \cdot, \cdot \rangle: \mathcal{D}'(\Tone) \times \mathcal{D}(\Tone) \rightarrow \mathbb{C}$ is the duality pairing. Notice that every test function $\varphi \in \mathcal{D}(\Tone)$ can be identified with the induced distribution $T_{\varphi} \in \mathcal{D}'(\Tone)$ defined by $\langle T_{\varphi}, \psi \rangle := \frac{1}{\sqrt{2 \pi}} \int_{- \pi}^{\pi} \varphi(x) \psi(x) dx, \, \psi \in \mathcal{D}(\Tone)$. Then the Fourier coefficients of $T_{\varphi}$ coincide with the usual Fourier coefficients for $\varphi \in \mathcal{D}(\Tone)$, namely
\[
\hat{T}_{\varphi}(k) = \hat{\varphi}(k) = \frac{1}{\sqrt{2 \pi}} \int_{-\pi}^{\pi} \varphi(x)e^{-ikx}dx.
\]
When no confusion is likely, we will denote by $\varphi$ both the function and its induced distribution. Moreover, by \cite[Theorem 12.5.3]{EDW82}, we have the Fourier series representation 
\[
f = \sum_{k \in \mathbb{Z}} \hat{f}(k) e_k  \qquad \text{for} \quad f \in \mathcal{D}'(\Tone) \quad \text{(convergence in $\mathcal{D}'(\Tone)$)}.
\]

To define Besov spaces over $\Tone$, let $\mathcal{S}(\mathbb{R})$ be the Schwartz space on $\mathbb{R}$ and $\mathcal{S}'(\mathbb{R})$ the space of tempered distributions on $\mathbb{R}$. Further, let $\Phi(\mathbb{R})$ denote the collection of all systems $(\varphi_j)_{j \in \mathbb{N}} \subset \mathcal{S}(\mathbb{R})$ satisfying the properties:
\begin{itemize}
	\item $\text{supp} \, \varphi_0 \subset [-2, 2], \qquad \text{supp} \, \varphi_j \subset [- 2^{j + 1}, - 2^{j - 1}] \cup [2^{j - 1}, 2^{j + 1}], \quad j \geq 1$,
	\item $\displaystyle \sum_{j \in \mathbb{N}_0} \varphi_j (x) = 1, \quad  x \in \mathbb{R}$,
	\item $\displaystyle \forall \, l \in \mathbb{N}_0, \; \exists \, C_l > 0 \text{ so that } \sup_{j \in \mathbb{N}_0} 2^{l j} \| \varphi_j^{(l)}\|_{C(\mathbb{R})} \leq C_l$.
\end{itemize}

Now, let $1 \leq p, q \leq \infty$, $s \in \mathbb{R}$ be fixed parameters and $\varphi = (\varphi_j) \in \Phi(\mathbb{R})$. For $f \in \mathcal{D}'(\Tone)$, $j \in \mathbb{N}_0$, the series $\sum_{k \in \mathbb{Z}} \varphi_j(k) \hat{f}(k) e_k$ has only finitely many nonzero terms, by compactness of the support of $\varphi_j$ (we refer to finite series of this form as \emph{trigonometric polynomials}), and it follows that $\sum_{k \in \mathbb{Z}} \varphi_j(k) \hat{f}(k) e_k \in L_p(\Tone)$. The norm on $L_p(\Tone)$ is given by
\[
\| g \|_p := \begin{cases} \displaystyle \left( \frac{1}{2 \pi} \int_{\Tone} |g(x)|^p dx \right)^{1/p} & \text{$1 \leq p < \infty$,}\\ \displaystyle \text{ess}\sup_{\hspace{-1.5em} x \in \Tone} |g(x)| & \text{$p = \infty$}. \end{cases}
\]
Now we define the periodic Besov space
\begin{equation}\label{Eqn:BesovSpaceDefined}
B^{s,\varphi}_{p,q}(\Tone) := \left\{ f \in \mathcal{D}'(\Tone) : \left( 2^{s j} \left\| \sum_{k \in \mathbb{Z}} \varphi_j(k) \hat{f}(k) e_k \right\|_p \right)_{j \in \mathbb{N}_0} \in \ell^q(\mathbb{N}_0) \right\}.
\end{equation}
Then $B^{s, \varphi}_{p,q}(\Tone)$ is a Banach space when equipped with the norm
\begin{equation}
\| f \|_{B^{s,\varphi}_{p,q}} := \begin{cases} \displaystyle \left( \sum_{j \in \mathbb{N}_0} 2^{s j q} \left\| \sum_{k \in \mathbb{Z}} \varphi_j(k) \hat{f}(k) e_k \right\|^q_p \right)^{1/q} & \text{for $q < \infty$,}\\ \displaystyle \sup_{j \in \mathbb{N}_0} 2^{s j} \left\| \sum_{k \in \mathbb{Z}} \varphi_j(k) \hat{f}(k) e_k \right\|_p & \text{for $q = \infty$.} \end{cases}
\end{equation}

Although the definition of a periodic Besov space depends explicitly upon the choice of system $\varphi \in \Phi(\mathbb{R})$, it can be shown that $(B^{s,\varphi}_{p,q}(\Tone), \| \cdot \|_{B^{s,\varphi}_{p,q}})$ is equivalent to $(B^{s,\psi}_{p,q}(\Tone), \| \cdot \|_{B^{s,\psi}_{p,q}})$, for two systems $\varphi, \psi \in \Phi(\mathbb{R})$, c.f. \cite[Theorem 3.5.1(i)]{ST87}. Hence, we drop reference to particular systems $\varphi \in \Phi(\mathbb{R})$ and simply refer to Besov spaces parametrized by $1 \leq p, q \leq \infty$ and $s \in \mathbb{R}$. See \cite{AM97,AM01,AB04,ST87} for more information on Besov spaces and their properties. We mention one property that comes up in the sequel, c.f. \cite[Theorem 3.1 (ii)]{AB04} or \cite[Theorem 3.5.4 (i)]{ST87}.

\begin{prop}\label{Thm:BisC}
For $s \in \mathbb{R}_+ \setminus \mathbb{Z}$, it holds that $B^s_{\infty, \infty}(\Tone) = C^s (\Tone)$.
\end{prop}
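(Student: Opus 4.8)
The plan is to establish the two continuous embeddings $B^s_{\infty,\infty}(\Tone) \hookrightarrow C^s(\Tone)$ and $C^s(\Tone) \hookrightarrow B^s_{\infty,\infty}(\Tone)$, so that the two Banach spaces coincide with equivalent norms. First I would reduce to the case $s \in (0,1)$. The lifting property of the periodic Besov scale does this: since $\widehat{f'}(k) = ik\hat f(k)$ and $|k|$ is comparable to $2^j$ on $\operatorname{supp}\varphi_j$ for $j \geq 1$, differentiation maps $B^{s}_{\infty,\infty}(\Tone)$ boundedly into $B^{s-1}_{\infty,\infty}(\Tone)$, and conversely a distribution $f$ with $f' \in B^{s-1}_{\infty,\infty}(\Tone)$ and prescribed mean $\hat f(0)$ lies in $B^{s}_{\infty,\infty}(\Tone)$. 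Coupled with the evident identity $C^{s}(\Tone) = \{ f \in C^1(\Tone) : f' \in C^{s-1}(\Tone)\}$ for $s > 1$ (immediate from the definition of $C^\theta$ via $\phi$), this lets me peel off $\lfloor s \rfloor$ derivatives and assume $0 < s < 1$ throughout the main argument.

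For $s \in (0,1)$, fix a system $\varphi = (\varphi_j) \in \Phi(\mathbb R)$ and write $\Delta_j f := \sum_{k \in \mathbb{Z}} \varphi_j(k)\hat f(k) e_k$, a convolution $\Delta_j f = K_j * f$ against a periodic kernel $K_j$ with $\widehat{K_j}(k) = \varphi_j(k)$. \emph{Direction $C^s(\Tone) \subset B^s_{\infty,\infty}(\Tone)$.} Given $f \in C^s(\Tone)$, the cancellation $\int_{\Tone} K_j = \widehat{K_j}(0) = \varphi_j(0) = 0$ for $j \geq 1$ lets me subtract $\tilde f(x)$ inside the convolution integral, whence $\|\Delta_j f\|_\infty \lesssim [f]_{s,\Tone} \int_{\Tone} |K_j(y)|\,\dT^s(0,y)\,dy \lesssim 2^{-sj}[f]_{s,\Tone}$; here the final kernel bound comes from the decay estimates forced by the third defining property of $\Phi(\mathbb R)$ (the $L^1$-normalized dilation behavior $K_j \approx 2^j K_1(2^j \cdot)$ near the origin, the periodic tails being harmless). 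Since also $\|\Delta_0 f\|_\infty \lesssim \|f\|_\infty$, we obtain $\|f\|_{B^s_{\infty,\infty}} = \sup_j 2^{sj}\|\Delta_j f\|_\infty \lesssim \|f\|_{C^s(\Tone)}$.

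\emph{Direction $B^s_{\infty,\infty}(\Tone) \subset C^s(\Tone)$.} Given $f \in B^s_{\infty,\infty}(\Tone)$ with $M := \|f\|_{B^s_{\infty,\infty}}$, we have $\|\Delta_j f\|_\infty \leq M 2^{-sj}$; since $s > 0$, the series $\sum_j \Delta_j f$ converges absolutely and uniformly, its sum is continuous, and by the Fourier series representation recalled before the proposition it equals $f$, so $\|f\|_\infty \lesssim M$. To estimate $[f]_{s,\Tone}$, fix $x,y \in \Tone$ with $0 < \dT(x,y) < 1$, choose $N$ with $2^{-N} \leq \dT(x,y) < 2^{-N+1}$, and split $f(x) - f(y) = \sum_{j \leq N}(\Delta_j f(x) - \Delta_j f(y)) + \sum_{j > N}(\Delta_j f(x) - \Delta_j f(y))$. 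For the high frequencies, $|\Delta_j f(x) - \Delta_j f(y)| \leq 2\|\Delta_j f\|_\infty \leq 2M 2^{-sj}$ sums to $\lesssim M 2^{-sN} \lesssim M\,\dT^s(x,y)$. For the low frequencies, the periodic Bernstein inequality $\|(\Delta_j f)'\|_\infty \lesssim 2^j \|\Delta_j f\|_\infty$ (valid since $\Delta_j f$ is a trigonometric polynomial with spectrum in $|k| \leq 2^{j+1}$) combined with Proposition~\ref{Prop:IntrinsicDef}(c) gives $|\Delta_j f(x) - \Delta_j f(y)| \lesssim M 2^{(1-s)j}\dT(x,y)$, which sums over $j \leq N$ to $\lesssim M 2^{(1-s)N}\dT(x,y) \lesssim M\,\dT^s(x,y)$ because $1 - s > 0$. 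Hence $[f]_{s,\Tone} \lesssim M$ and $\|f\|_{C^s(\Tone)} \lesssim \|f\|_{B^s_{\infty,\infty}}$.

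The main obstacle is purely technical: transferring the two classical Littlewood–Paley ingredients — the kernel bound $\int_{\Tone}|K_j(y)|\,\dT^s(0,y)\,dy \lesssim 2^{-sj}$ and the Bernstein inequality — from $\mathbb R$ to the torus, since the periodic kernels $K_j$ are only approximately self-similar under dilation. I would handle this either by relating $K_j$ to the corresponding non-periodic kernel through Poisson summation, or by stating these two facts as lemmas citing \cite{ST87} and \cite{AB04}. An alternative, shorter route, which I would mention, is to bypass the dyadic estimates entirely by combining the isometric isomorphism $\phi$ of Section~\ref{Section:PeriodicFunctions} with the known real-line identification $B^s_{\infty,\infty}(\mathbb R) = C^s(\mathbb R)$ for $s \in \mathbb{R}_+\setminus\mathbb Z$, together with the compatibility of the periodic Besov spaces $B^s_{\infty,\infty}(\Tone)$ with restrictions of periodic tempered distributions on $\mathbb R$, as in \cite[Section 3.5]{ST87}.
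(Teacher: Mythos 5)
Your proposal is correct, but it proves considerably more than the paper does: the paper offers no argument for this proposition at all, simply citing \cite[Theorem 3.1 (ii)]{AB04} and \cite[Theorem 3.5.4 (i)]{ST87}, whereas you reconstruct the classical H\"older--Zygmund identification by hand. Your scheme is the standard one and all the steps check out: the lifting reduction to $s \in (0,1)$ is sound (and could even be run through the paper's own Theorem~\ref{Thm:FourierMultipliers}, whose proof is independent of Proposition~\ref{Thm:BisC}, using the multipliers $ik$ and $(ik)^{-1}$ on the mean-zero part, so there is no circularity); the cancellation argument with $\varphi_j(0)=0$ for $j \ge 1$, the dyadic splitting at $2^{-N} \le \dT(x,y) < 2^{-N+1}$, and the Bernstein-plus-Proposition~\ref{Prop:IntrinsicDef}(c) estimate for the low frequencies all work, and your choice of $N$ uses both inequalities correctly. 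The two ingredients you flag are indeed the only real work: the kernel moment bound $\int_{\Tone}|K_j(y)|\,\dT^s(0,y)\,dy \lesssim 2^{-sj}$ follows from the pointwise decay $|K_j(y)| \lesssim 2^j(1+2^j|y|)^{-2}$ on $[-\pi,\pi]$, obtained by two summations by parts using the third defining property of $\Phi(\mathbb{R})$ (or via Poisson summation against the rapidly decaying inverse Fourier transform of $\varphi_j$), and the periodic Bernstein inequality is classical for trigonometric polynomials of degree at most $2^{j+1}$. Two trivial loose ends worth a sentence each in a full write-up: the case $\dT(x,y) \ge 1$ of the seminorm estimate, which follows from the sup-norm bound alone since then $\dT^s(x,y) \ge 1$, and the $k=0$ mode in the inverse lifting, which you already handle by prescribing $\hat f(0)$. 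Your alternative shorter route (transfer via $\phi$ to the known identification $B^s_{\infty,\infty}(\mathbb{R}) = C^s(\mathbb{R})$, or a direct citation of \cite{ST87}) is exactly what the paper does; the value of your main argument is that it is self-contained and exhibits explicitly why the little-H\"older/Besov machinery works on $\Tone$, at the cost of the two kernel lemmas.
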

\section{A Fourier Multiplier Theorem}

The Fourier multiplier result that we will need is a slight modification of the result \cite[Theorem 4.5 (ii)]{AB04}, which gives sufficient conditions on the symbol of a Fourier multiplier so that the associated operator is continuous from $B^s_{p,q}(\Tone)$ to itself. We modify the result to get sufficient conditions for continuity from $B^s_{p,q}(\Tone)$ to $B^r_{p,q}(\Tone)$ for distinct values of $r$ and $s$. The modification we apply is the same technique used by B.V. Matioc \cite{MAT09} in altering the result \cite[Theorem 4.5 (i)]{AB04}. 

For $1 \leq p \leq \infty$, we define the Sobolev space $W^1_p(\Tone) := \{ f \in L_p(\Tone): f' \in L_p(\Tone) \}$ with the norm $\| f \|_{W^1_p} := \| f \|_p + \| f' \|_p.$

\begin{thm}\label{Thm:FourierMultipliers}
Let $r, s \in \mathbb{R}_+$ and $1 \leq p, q \leq \infty$. Suppose that $(M_k)_{k \in \mathbb{Z}} \subset \mathbb{C}$ is a sequence such that
\begin{equation}\label{Eqn:MultiplierAssumptions}
\begin{split}
s_1 &:= \sup_{k \in \mathbb{Z} \setminus \{ 0 \}} |k|^{r - s} | M_k | < \infty,\\
s_2 &:= \sup_{k \in \mathbb{Z} \setminus \{ 0 \}} |k|^{r - s + 1} | M_{k + 1} - M_k | < \infty.
\end{split}
\end{equation}
Then the Fourier multiplier with symbol $( M_k )_{k \in \mathbb{Z}}$ is a continuous mapping from $B^{s}_{p,q}(\Tone)$ to $B^{r}_{p,q}(\Tone)$, namely
\[
T: \left[ \sum_{k \in \mathbb{Z}} \hat{f}(k) e_k \longmapsto \sum_{k \in \mathbb{Z}} M_k \hat{f}(k) e_k \right] \in \mathcal{L} \left( B^{s}_{p,q}(\Tone), B^{r}_{p,q}(\Tone) \right).
\]
\end{thm}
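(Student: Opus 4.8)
\emph{Proof proposal.}
The plan is to follow the proof of Arendt and Bu for \cite[Theorem 4.5 (ii)]{AB04}, incorporating the device of \cite{MAT09} that allows the target index $r$ to differ from the source index $s$. Fix a system $\varphi=(\varphi_j)_{j\in\mathbb N_0}\in\Phi(\mathbb R)$; since, as noted after \eqref{Eqn:BesovSpaceDefined}, the Besov norm is independent of this choice, we may take one adapted to dyadic scaling with $\varphi_i\varphi_j\equiv 0$ on $\mathbb Z$ whenever $|i-j|\ge 2$, so that $\varphi_j=\varphi_j\cdot(\varphi_{j-1}+\varphi_j+\varphi_{j+1})$ on $\mathbb Z$ (with $\varphi_{-1}:=0$). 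For $g\in\mathcal D'(\Tone)$ write $\Delta_j g:=\sum_{k}\varphi_j(k)\hat g(k)e_k$, so that $\|g\|_{B^s_{p,q}}=\big\|\,(2^{sj}\|\Delta_j g\|_p)_j\,\big\|_{\ell^q}$. First I would record that $s_1<\infty$ forces $|M_k|\le s_1|k|^{s-r}$, so that, since $f\in\mathcal D'(\Tone)$ has polynomially bounded Fourier coefficients, the series $Tf:=\sum_k M_k\hat f(k)e_k$ converges in $\mathcal D'(\Tone)$. It then remains to estimate $\|Tf\|_{B^r_{p,q}}=\big\|\,(2^{rj}\|\Delta_j^M f\|_p)_j\,\big\|_{\ell^q}$, where $\Delta_j^M f:=\sum_k\varphi_j(k)M_k\hat f(k)e_k$.

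The heart of the matter is one block estimate, uniform in $j$, $p$ and $f$:
\[
\|\Delta_j^M f\|_p\;\le\;C\,(s_1+s_2)\,2^{(s-r)j}\sum_{|i-j|\le 1}\|\Delta_i f\|_p .
\]
To obtain it I would first use the identity $\sum_{|i-j|\le 1}\varphi_i\equiv 1$ on $\operatorname{supp}\varphi_j$ to write $\varphi_j(k)M_k\hat f(k)=\sum_{|i-j|\le1}\varphi_j(k)M_k\,\widehat{\Delta_i f}(k)$, whence $\Delta_j^M f=\sum_{|i-j|\le1}K_j*(\Delta_i f)$ with the periodic convolution kernel $K_j:=\sum_k\varphi_j(k)M_k e_k$; by Young's inequality on $\Tone$ the block estimate follows once $\|K_j\|_{L_1(\Tone)}\le C(s_1+s_2)2^{(s-r)j}$ is established. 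The symbol $n_k:=\varphi_j(k)M_k$ is supported in $\{2^{j-1}\le|k|\le 2^{j+1}\}$, and using the $\Phi$-system bound $\sup_j 2^{j}\|\varphi_j'\|_{C(\mathbb R)}\le C_1$ together with \eqref{Eqn:MultiplierAssumptions} one verifies the discrete Marcinkiewicz bounds $\sup_k|n_k|\le C(s_1+s_2)2^{(s-r)j}$ and $\sum_k|n_{k+1}-n_k|\le C(s_1+s_2)2^{(s-r)j}$; the smoothness of $\varphi_j$ is what controls $n_k$ near the edges of the dyadic block. Passing from these bounds to the $L_1(\Tone)$-estimate for $K_j$ is precisely the kernel estimate underlying \cite[Theorem 4.5]{AB04}: one sums by parts against de la Vall\'ee-Poussin type kernels, whose $L_1(\Tone)$-norms are bounded independently of the degree (unlike the Dirichlet kernels), and that argument carries over unchanged. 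The only new ingredient is the weight $|k|^{r-s}$, which on a fixed dyadic block is comparable to the constant $2^{(s-r)j}$ and hence merely produces that prefactor — this is exactly the modification of \cite{MAT09}.

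Granting the block estimate, the proof finishes by resummation: with $a_i:=2^{si}\|\Delta_i f\|_p$ (and $a_i:=0$ for $i<0$) one has $2^{rj}\|\Delta_j^M f\|_p\le C(s_1+s_2)2^{sj}\sum_{|i-j|\le1}\|\Delta_i f\|_p\le C\,2^{|s|}(s_1+s_2)(a_{j-1}+a_j+a_{j+1})$ because $|j-i|\le 1$; taking $\ell^q(\mathbb N_0)$-norms and using the triangle inequality and translation invariance of $\ell^q$ yields $\|Tf\|_{B^r_{p,q}}\le 3C\,2^{|s|}(s_1+s_2)\|f\|_{B^s_{p,q}}$, which is the assertion (and also shows $Tf\in B^r_{p,q}(\Tone)$). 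I expect the one real obstacle to be the uniform $L_1(\Tone)$-bound for the block kernels $K_j$: a crude Abel summation against Dirichlet kernels loses a factor of order $j$, so the smoothness of the cut-offs $\varphi_j$ and the first-difference hypothesis on $(M_k)$ must be exploited jointly, exactly as in \cite{AB04}. Everything else — the support bookkeeping, the reduction via $\Delta_i f$ to $L_1$-kernel convolutions, and the $\ell^q$ resummation — is routine, and by Proposition~\ref{Thm:BisC} the result applies directly to the periodic H\"older and little-H\"older scales used in the later sections.
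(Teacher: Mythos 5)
Your overall architecture (reduce to a block estimate, handle the blocks by convolution with $K_j:=\sum_k\varphi_j(k)M_k e_k$ and Young's inequality, then resum in $\ell^q$) is fine, and the reduction steps, the weight $|k|^{r-s}\sim 2^{(r-s)j}$ observation from \cite{MAT09}, and the $\ell^q$ resummation are all correct. But there is a genuine gap at the step you yourself flag as the crux: the passage from your two ``discrete Marcinkiewicz bounds'' ($\sup_k|n_k|\le C(s_1+s_2)2^{(s-r)j}$ and $\sum_k|n_{k+1}-n_k|\le C(s_1+s_2)2^{(s-r)j}$) to the uniform bound $\|K_j\|_{L_1(\Tone)}\le C(s_1+s_2)2^{(s-r)j}$. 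Those two bounds alone do not imply any $j$-independent $L_1$ bound: the indicator of the block $\{2^{j-1}\le|k|\le 2^{j+1}\}$ has sup and total variation $O(1)$, yet its kernel is a modulated Dirichlet kernel whose $L_1(\Tone)$-norm grows like $j$. Moreover, the mechanism you invoke --- summation by parts against de la Vall\'ee-Poussin/Fej\'er type kernels --- genuinely requires control of weighted \emph{second} differences, i.e.\ precisely the extra hypothesis $s_3$ that appears in the operator-valued Theorem~\ref{Thm:BanachFourierMultipliers} (and in the general Banach-space case of \cite{AB04}); it is not available under \eqref{Eqn:MultiplierAssumptions}, and it is not ``the kernel estimate underlying \cite[Theorem 4.5]{AB04}'' in the Fourier type~2 case.

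The intermediate claim itself ($\|K_j\|_{L_1(\Tone)}\lesssim (s_1+s_2)2^{(s-r)j}$) is true, but to prove it you must use more than sup plus summed variation: you need the \emph{pointwise} first-difference bound $|n_{k+1}-n_k|\lesssim (s_1+s_2)2^{(s-r)j}2^{-j}$ (coming from $s_2$ together with $\|\varphi_j'\|_{C(\mathbb{R})}\lesssim 2^{-j}$), and then a Plancherel/Bernstein-type argument: the piecewise affine interpolant $g_j$ of $(n_k)$ is supported on an interval of length $\sim 2^j$ with $\|g_j\|_\infty\lesssim s_1 2^{(s-r)j}$ and $\|g_j'\|_\infty\lesssim (s_1+s_2)2^{(s-r)j}2^{-j}$, whence $\|\check g_j\|_{L_1(\mathbb{R})}\lesssim \|g_j\|_{L_2}^{1/2}\|g_j'\|_{L_2}^{1/2}\lesssim (s_1+s_2)2^{(s-r)j}$, and Poisson summation transfers this to $K_j$. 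This scale-invariant $W^1_2$ estimate is exactly the content of Lemma~\ref{Lem:FourierTypeBound} (the Fourier type~2 input of \cite[Lemma 4.4]{AB04}), which is how the paper proceeds: it applies Lemma~\ref{Lem:FourierTypeBound} directly to the rescaled piecewise-affine symbols $m_j(2^j\cdot)$, bypassing the explicit kernel altogether. So either quote Lemma~\ref{Lem:FourierTypeBound} (or reprove the $W^1_2\Rightarrow \mathcal{F}L_1$ estimate) at your crucial step, or strengthen your hypotheses to include second differences; as written, the de la Vall\'ee-Poussin summation-by-parts claim does not close the argument.
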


The proof of this result relies upon the following Lemma, which is a simple version of \cite[Lemma 4.4]{AB04}. Here we only consider $\mathbb{C}$-valued functions over $\mathbb{R}$, so that the spaces involved are of Fourier type 2 and the statement is simplified as follows.

\begin{lem}\label{Lem:FourierTypeBound}
Let $1 \leq p \leq \infty$ and let $m \in C_c (\mathbb{R}, \mathbb{C}) \cap \mathcal{F}L_1 (\mathbb{R}, \mathbb{C})$. Then
\begin{equation}\label{Eqn:FourierTypeBound}
\left\| \sum_{k \in \mathbb{Z}} m(k) \hat{f}(k) e_k \right\|_p \leq C_p \eta_{2} (m) \| \sum_{k \in \mathbb{Z}} \hat{f}(k) e_k \|_p
\end{equation}
holds whenever $f \in L_p(\Tone)$ is a trigonometric polynomial, where $C_p$ is a constant depending only on $p$, and $\eta_{2}(m) := \inf \{ \| m(a \cdot) \|_{W^1_2}: a > 0 \}.$
\end{lem}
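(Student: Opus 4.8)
The plan is to reduce the periodic estimate to a transference argument involving the Fourier transform on $\mathbb{R}$, exploiting the fact that $m$ is compactly supported and lies in $\mathcal{F}L_1$. First I would observe that since $m \in \mathcal{F}L_1(\mathbb{R},\mathbb{C})$, we may write $m = \hat{g}$ (or $\check{g}$, up to normalization conventions) for some $g \in L_1(\mathbb{R},\mathbb{C})$, so that the operator $f \mapsto \sum_k m(k)\hat f(k) e_k$ acts on trigonometric polynomials as a "sampled" convolution. The key classical fact to invoke is the Poisson-type relation: sampling the Fourier transform of $g$ at the integers corresponds, on the periodic side, to periodization of $g$; concretely, for a trigonometric polynomial $f = \sum_k \hat f(k) e_k$ one gets $\sum_k m(k)\hat f(k)e_k = G \ast f$ where $G = \sum_{n\in\mathbb{Z}} g(\cdot + 2\pi n)$ is the $2\pi$-periodization of $g$ (with suitable normalizing constants). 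Then $\|G\ast f\|_p \le \|G\|_{L_1(\Tone)}\|f\|_p \le (2\pi)^{-1}\|g\|_{L_1(\mathbb{R})}\|f\|_p$ by Young's inequality on the torus.

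Next I would need to bound $\|g\|_{L_1(\mathbb{R})}$ by $C_p\,\eta_2(m)$, which is where the Fourier-type-2 (Plancherel) property of $L_2$ enters and explains the appearance of the $W^1_2$ norm. The standard trick: for any $a > 0$, write $g(x) = \langle x\rangle^{-1}\cdot \langle x\rangle g(x)$ where $\langle x \rangle = (1+|x|^2)^{1/2}$, apply Cauchy–Schwarz to get $\|g\|_{L_1} \le \|\langle\cdot\rangle^{-1}\|_{L_2}\,\|\langle\cdot\rangle g\|_{L_2}$, and then recognize $\|\langle\cdot\rangle g\|_{L_2} \approx \|g\|_{L_2} + \|x g\|_{L_2}$, which by Plancherel equals (up to constants) $\|m\|_{L_2} + \|m'\|_{L_2} = \|m\|_{W^1_2}$. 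A rescaling $m \mapsto m(a\cdot)$ (equivalently $g \mapsto a^{-1}g(\cdot/a)$) leaves $\|g\|_{L_1}$ invariant but changes $\|m(a\cdot)\|_{W^1_2}$, so taking the infimum over $a>0$ yields $\|g\|_{L_1(\mathbb{R})} \le C\,\eta_2(m)$; absorbing the torus-normalization constant gives the constant $C_p$ (here genuinely independent of $p$, since Young's inequality on a probability space has constant $1$, but we keep the notation $C_p$ for consistency with the general statement). I would cite \cite[Lemma 4.4]{AB04} for the general Fourier-type version and simply note that for scalar-valued functions the relevant space $L_2$ has Fourier type $2$, so $\eta_{q'} (m)$ with $q'=2$ is exactly the quantity $\eta_2(m)$ defined here.

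The main obstacle I anticipate is making the periodization/sampling identity fully rigorous at the level of trigonometric polynomials and $\mathcal{D}'(\Tone)$ — in particular, justifying that $\sum_k m(k)\hat f(k)e_k$ genuinely equals $G \ast f$ with $G$ the periodization of $g$, including verifying that $G \in L_1(\Tone)$ (absolute convergence of $\sum_n g(\cdot+2\pi n)$ in $L_1(\Tone)$, which follows from $g \in L_1(\mathbb{R})$ by Tonelli) and pinning down the normalization constants coming from the conventions $\hat\varphi(k) = \frac{1}{\sqrt{2\pi}}\int\varphi e^{-ikx}dx$ and the Schwartz-transform convention on $\mathbb{R}$. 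Since $f$ is a trigonometric polynomial the sums are finite, so no convergence issue arises on that side; the only care needed is the interchange of the (finite) sum over $k$ with the (absolutely convergent) periodization sum over $n$. Once the identity and the $L_1$ bound on $g$ are in hand, the inequality \eqref{Eqn:FourierTypeBound} is immediate from Young's inequality.
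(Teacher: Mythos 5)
The paper itself gives no proof of this lemma---it is quoted directly from \cite[Lemma 4.4]{AB04}, specialized to the scalar (Fourier type $2$) case---and your argument (write $m$ as the transform of $g\in L_1(\mathbb{R})$, identify the periodic multiplier with convolution by the $2\pi$-periodization of $g$ via Tonelli on a trigonometric polynomial, apply Young/Minkowski on the torus, bound $\|g\|_{L_1}$ by Cauchy--Schwarz against $(1+x^2)^{-1/2}$ plus Plancherel, and use the scaling $g\mapsto a^{-1}g(\cdot/a)$ to pass to the infimum defining $\eta_2(m)$) is precisely the standard proof of that cited result. It is correct as proposed; the only points left implicit---that the estimate is vacuous when $\eta_2(m)=\infty$, and that $xg\in L_2$ is legitimate exactly when $m'\in L_2$ so the Plancherel step makes sense---are harmless.
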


\begin{proof}[Proof of Theorem \ref{Thm:FourierMultipliers}:]
We provide the proof here for the reader's convenience and reference \cite[Theorem 4.5(ii)]{AB04} and \cite[Theorem 2.2.1]{MAT09}.
Fix $(M_k)_{k \in \mathbb{Z}} \subset \mathbb{C}$ satisfying \eqref{Eqn:MultiplierAssumptions} and parameters $s,r \in \mathbb{R}$, $1 \leq p, q \leq \infty$ and $\varphi := \{ \varphi_j \}_{j \geq 0} \in \Phi(\mathbb{R})$. We follow the same method as Arendt and Bu, with modifications to account for the (possibly nonzero) difference $|r-s|$, which is zero in the case considered in \cite{AB04}. To see that $T$ is a bounded operator from $B^{s}_{p,q}(\Tone)$ to $B^{r}_{p,q}(\Tone)$ as stated, it will suffice to show that there exists some constant $C > 0$ such that the bound
\[
\left\| \sum_{k \in \mathbb{Z}} \left( 2^{(r-s)j} M_k \right) \varphi_j(k) \hat{f}(k) e_k \right\|_p \leq C \left\|  \sum_{k \in \mathbb{Z}} \varphi_j(k) \hat{f}(k) e_k  \right\|_p,
\]
holds uniformly for $f \in B^s_{p,q}(\Tone)$ and $j \geq 0$. 
\[
\left\| \sum_{k \in \mathbb{Z}} \left( 2^{(r-s)j} M_k \right) \varphi_j(k) \hat{f}(k) e_k \right\|_p \leq C \left\|  \sum_{k \in \mathbb{Z}} \varphi_j(k) \hat{f}(k) e_k  \right\|_p.
\]
To demonstrate this bound, we define an appropriate sequence of compactly supported continuous functions and take advantage of Lemma \ref{Lem:FourierTypeBound}.

For $j \geq 1$, define $m_j: \mathbb{R} \rightarrow \mathbb{C}$ by $m_j(x) = 0$ if $|x| \geq 2^{j + 2}$ or $|x| \leq 2^{j - 2}$, $m_j(k) = 2^{(r-s)j}M_k$ for $k \in \mathbb{Z}$ with $2^{j - 1} \leq |k| \leq 2^{j + 1}$, and $m_j$ is affine on $[k, k+1]$ for all $k \in \mathbb{Z}$.
We define $m_0$ in a similar manner, where $m_0(x) = 0$ if $|x| \geq 2, \; m_0(k) = M_k$ for $-1 \leq |k| \leq 1$, and $m_0$ is affine on every interval $[k, k+1], \; k \in \mathbb{Z}.$

Now, one can see that $m_j \in C_c(\mathbb{R}) \cap \mathcal{F}L^1(\mathbb{R})$ and, by compactness of $\text{supp} \, \varphi_j$, $\sum_{k \in \mathbb{Z}} \varphi_j(k) \hat{f}(k) e_k$ is a trigonometric polynomial, for $j \geq 0$. Hence, we can apply Lemma \ref{Lem:FourierTypeBound} to see that, for $j \geq 1$, the following bounds hold.
\[
\begin{split}
&\left\| \sum_{k \in \mathbb{Z}} \left( 2^{(r - s)j} M_k \right) \varphi_j(k) \hat{f}(k) e_k \right\|_p = \left\| \sum_{2^{j - 1} \leq |k| \leq 2^{j + 1}} m_j(k) \varphi_j(k) \hat{f}(k) e_k \right\|_p\\
&\quad \leq C_p \, \eta_{_2}(m_j) \left\| \sum_{k \in \mathbb{Z}} \varphi_j(k) \hat{f}(k) e_k \right\|_p
\leq C_p \, \| m_j(2^j \cdot) \|_{W^1_2} \left\| \sum_{k \in \mathbb{Z}} \varphi_j(k) \hat{f}(k) e_k \right\|_p.
\end{split}
\]
Hence, it suffices to show that $\{ \| m_j(2^j \cdot ) \|_{W^1_2} \}_{j \geq 1}$ is uniformly bounded. From direct computation, one can see that this bound follows from the property $\text{supp} \, m_j \subset [ \frac{1}{4}, 4 ]$ and the bounds 
\[
\begin{split}
\sup_{x \in \mathbb{R}} |m_j(2^j x)| \leq \sup_{2^{j-1} \leq |k| \leq 2^{j+1}} 2^{(r - s)j} |M_k| \leq \sup_{2^{j-1} \leq |k| \leq 2^{j+1}} \left(  \frac{2^{(r-s)j}}{|k|^{r-s}} \right) s_1 \leq 2^{|r-s|} s_1, \\
\sup_{2^{j-1} \leq |p| \leq 2^{j+1}} 2^{(r-s+1)j}|M_{p+1} - M_p| \leq \sup_{2^{j-1} \leq |p| \leq 2^{j+1}} \left( \frac{2^{(r-s+1)j}}{|p|^{(r-s+1)}} \right) s_2 \leq 2^{|r-s+1|} s_2.
\end{split}
\]
Then, the $W_2^1(\Tone)$ norms can be bounded explicitly, for all $j \geq 0$, and it follows that the operator norm of $T$, as a bounded linear operator from $B^{s}_{p,q}(\Tone)$ to $B^{r}_{p,q}(\Tone)$, can be bounded in terms of the constants $s_1$ and $s_2$ alone.
\end{proof}
\section{Ellipticity and Generation of Analytic Semigroups}

Having established a setting within which we will look for solutions to the inhomogeneous Cauchy problem \eqref{Eqn:CP} in Section~\ref{Section:PeriodicFunctions}, we turn our attention back to the differential operator $\mathcal{A} = \mathcal{A}(\cdot, D)$. First, we define ellipticity conditions on a differential operator of order $2m$ and then we demonstrate our first result regarding generation of analytic semigroups on periodic little-H$\ddo$lder spaces. 

Denote by $D := i \frac{d}{dx}$ the elementary differential operator over $\Tone$ and let $m \in \mathbb{N}$ be an arbitrary positive integer. Now, fix a collection $\{ b_k : k = 0, \ldots, 2m \} \subset h^{\alpha}(\Tone)$ of coefficient functions and consider the differential operator $\mathcal{A}$, acting on $h^{2m + \alpha}(\Tone)$, defined by
\[
\mathcal{A}u(x) := \mathcal{A}(x,D) u(x) := \sum_{k = 0}^{2m} b_k(x) \, (D^k u)(x) = \sum_{k = 0}^{2m} i^k \, b_k(x) \, u^{(k)}(x), \qquad x \in \Tone.
\]
By the embedding property Proposition~\ref{Thm:hAlphaCAlphaClosure}(a) and the fact that $h^{\alpha}(\Tone)$ is a Banach algebra, it follows immediately that $\mathcal{A}$ maps $h^{2m + \alpha}(\Tone)$ into $h^{\alpha}(\Tone)$. Now, denote by $\sigma \mathcal{A}: \Tone \times \mathbb{R} \rightarrow \mathbb{C}$ the \emph{principal symbol} of $\mathcal{A}$, defined by $\sigma \mathcal{A}(x,\xi) :=  b_{2m}(x) \xi^{2m}.$ Then we say that $\mathcal{A}$ is a \emph{uniformly elliptic} operator on $\Tone$ if there exists a constant $c_1 > 0$ such that 
\begin{equation}\label{Eqn:UniformEllipticity}
\re \, \big( \sigma \mathcal{A}(x,\xi) \big) \geq c_1 \qquad \text{for all} \quad x \in \Tone, \; | \xi | = 1.
\end{equation}
In case $b_{2m}$ is simply a $\mathbb{R}$-valued function, we see that uniform ellipticity is equivalent to the condition $b_{2m}(x) \geq c_1$ for all $x \in \Tone$. Meanwhile, when $b_{2m}$ takes values in $\mathbb{C} \setminus \mathbb{R}$, uniform ellipticity is equivalent to the more general condition $b_{2m}(\Tone) \subset \{ z \in \mathbb{C} : \re \, z \geq c_1 \}.$ Also notice, by assumption we have $b_{2m}$ continuous on $\Tone$, so that there always exists some constant $c_2 > 0$ for which $\| b_{2m} \|_{C(\Tone)} \leq c_2$. 

Following the notation of Amann \cite{AM95}, given Banach spaces $E_0$ and $E_1$ with $E_1 \dhook E_0$, we denote by $\mathcal{H}(E_1,E_0)$ the collection of $A \in \mathcal{L}(E_1,E_0)$ such that $-A$ is the infinitesimal generator of an analytic semigroup on $E_0$, with domain $D(A) = E_1$. Moreover, given parameters $\kappa \geq 1$, $\omega > 0$, we denote by $\mathcal{H}(E_1, E_0, \kappa, \omega)$ the set of linear operators $A: E_1 \rightarrow E_0$, closed in $E_0$, such that $\omega + A \in \mathcal{L}_{isom}(E_1, E_0)$ and 
\[
\kappa^{-1} \leq \frac{\|(\lambda + A)x\|_0}{|\lambda| \|x\|_0 + \|x\|_1} \leq \kappa, \qquad x \in E_1 \setminus \{ 0 \}, \, \re \, \lambda \geq \omega.
\]
Then, it follows, c.f. \cite[Theorem 1.2.2]{AM95}, that $\displaystyle \mathcal{H}(E_1,E_0) = \bigcup_{\substack{\kappa \geq 1\\ \omega > 0}} \mathcal{H}(E_1,E_0,\kappa,\omega)$.

\begin{thm}\label{Thm:GenerationWithConstant}
Let $m \in \mathbb{N}$, $\alpha \in \mathbb{R}_+ \setminus \mathbb{Z}$ and consider the differential operator $\mathcal{A}_b := b \,  D^{2m}$ with constant coefficient $b \in \mathbb{C}$. If $\mathcal{A}_b$ is uniformly elliptic, with constant $c_1 > 0$,
and $c_2 \geq c_1 > 0$ is chosen so that $|b| \leq c_2$, then $- \mathcal{A}_b$ generates a (strongly continuous) analytic semigroup on $h^{\alpha}(\Tone)$ with domain $h^{2m + \alpha}(\Tone)$. Moreover, for any $\omega > 0$, there exists $\kappa = \kappa(\omega, c_1, c_2, m)$ such that 
\[
\mathcal{A}_b \in \mathcal{H} \big(h^{2m + \alpha},h^{\alpha},\kappa(\omega, c_1, c_2, m), \omega \big).
\]
\end{thm}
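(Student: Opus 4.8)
The plan is to reduce the statement to a bound on an explicit family of Fourier multipliers and then invoke Theorem~\ref{Thm:FourierMultipliers} together with the characterization $B^s_{\infty,\infty}(\Tone) = C^s(\Tone)$ from Proposition~\ref{Thm:BisC}. Since $\mathcal{A}_b = b\,D^{2m}$ has constant coefficients, it diagonalizes in the Fourier basis: on $e_k$ we have $\mathcal{A}_b e_k = b\,(i k)^{2m} e_k = b\,(-1)^m k^{2m} e_k$. Write $\beta_k := b(-1)^m k^{2m}$; uniform ellipticity gives $\re \beta_k = \re\big(b(-1)^m\big)\,k^{2m}$, and (after absorbing the sign $(-1)^m$ into the statement — note $\sigma\mathcal{A}_b(x,\xi) = b\xi^{2m}$ and ellipticity forces $\re b \ge c_1$, so in fact $\re\big((-1)^m\beta_k/k^{2m}\big)$ must be handled with the correct sign convention, which I would pin down first) we get $\re\beta_k \gtrsim c_1 k^{2m}$ and $|\beta_k| \le c_2 k^{2m}$. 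For $\lambda$ with $\re\lambda \ge \omega > 0$, the resolvent $(\lambda + \mathcal{A}_b)^{-1}$ is the Fourier multiplier with symbol $M_k := (\lambda + \beta_k)^{-1}$.

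First I would establish the elementary scalar estimate: there is $C = C(c_1,c_2,m,\omega)$ with $|\lambda + \beta_k| \ge C^{-1}\big(|\lambda| + k^{2m}\big)$ for all $k \in \mathbb{Z}$ and all $\re\lambda \ge \omega$, and also $|\lambda + \beta_k| \ge C^{-1}(|\lambda|+1)$ uniformly (including $k=0$). This is the standard sectoriality computation: the numbers $\{\beta_k\}$ lie in a sector $\{\re z \ge c_1 |z|/c_2\cdot(\text{const})\}$ (more precisely in $\{z : \re z \ge (c_1/c_2)|z|\}$ after normalizing), which is a proper subsector of the right half-plane, so adding $\lambda$ from the right half-plane cannot cause cancellation and the claimed lower bound holds with a constant depending only on $c_1/c_2$ and $\omega$.

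Next I would verify the two Marcinkiewicz-type conditions of Theorem~\ref{Thm:FourierMultipliers} for the symbol $M_k = (\lambda+\beta_k)^{-1}$, with $r = s = \alpha$ — so I need $\sup_{k\ne 0}|M_k| < \infty$ and $\sup_{k\ne 0}|k|\,|M_{k+1}-M_k| < \infty$, with both quantities bounded by $C(c_1,c_2,m,\omega)$ uniformly in $\lambda$ with $\re\lambda\ge\omega$. The first is immediate from Step~1: $|M_k| \le C/(|\lambda|+k^{2m}) \le C$. For the second, $M_{k+1}-M_k = \dfrac{\beta_k - \beta_{k+1}}{(\lambda+\beta_k)(\lambda+\beta_{k+1})}$, and $|\beta_{k+1}-\beta_k| = |b|\,|k^{2m}-(k+1)^{2m}| \le c_2 \cdot 2m\,(|k|+1)^{2m-1} \lesssim k^{2m-1}$ for $|k|\ge 1$; combined with the two lower bounds $|\lambda+\beta_k|,|\lambda+\beta_{k+1}| \gtrsim |\lambda| + k^{2m} \gtrsim k^{2m}$, this gives $|k|\,|M_{k+1}-M_k| \lesssim |k|\cdot k^{2m-1}/k^{4m} = k^{-2m} \le 1$. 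Hence $T_\lambda := (\lambda+\mathcal{A}_b)^{-1}$ is bounded on $B^\alpha_{\infty,\infty}(\Tone) = h^\alpha(\Tone) \cap C^\alpha(\Tone)$... — here I must be careful: Theorem~\ref{Thm:FourierMultipliers} gives boundedness on $B^\alpha_{\infty,\infty}(\Tone)=C^\alpha(\Tone)$, and then one checks that the multiplier preserves the little-Hölder subspace $h^\alpha(\Tone)$, e.g. because $C^\infty(\Tone)$ is dense in $h^\alpha$, each $T_\lambda$ maps trigonometric polynomials to trigonometric polynomials, and $T_\lambda$ is continuous in the $C^\alpha$-norm, so it maps the closure $h^\alpha$ into itself.

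To finish, I would read off the two-sided estimate defining $\mathcal{H}(E_1,E_0,\kappa,\omega)$. Applying Theorem~\ref{Thm:FourierMultipliers} with $r = 2m+\alpha$, $s = \alpha$ to the symbol $M_k$ (the conditions $\sup_{k\ne0}|k|^{2m}|M_k| < \infty$ and $\sup_{k\ne0}|k|^{2m+1}|M_{k+1}-M_k| < \infty$ follow from exactly the same computation, now using $|\lambda+\beta_k| \gtrsim k^{2m}$) shows $(\lambda+\mathcal{A}_b)^{-1} \in \mathcal{L}(h^\alpha, h^{2m+\alpha})$; together with the multiplier bound $|\lambda|\,|M_k| \le C$ applied with $r=s=\alpha$, one obtains $\|(\lambda+\mathcal{A}_b)^{-1}f\|_{C^\alpha} \le \kappa|\lambda|^{-1}\|f\|_{C^\alpha}$ and $\|(\lambda+\mathcal{A}_b)^{-1}f\|_{C^{2m+\alpha}} \le \kappa\|f\|_{C^\alpha}$, which rearrange to the lower bound $\|(\lambda+\mathcal{A}_b)u\|_\alpha \ge \kappa^{-1}(|\lambda|\|u\|_\alpha + \|u\|_{2m+\alpha})$; the matching upper bound $\|(\lambda+\mathcal{A}_b)u\|_\alpha \le \kappa(|\lambda|\|u\|_\alpha + \|u\|_{2m+\alpha})$ is trivial from $\mathcal{A}_b \in \mathcal{L}(h^{2m+\alpha},h^\alpha)$. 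Invertibility of $\omega+\mathcal{A}_b$ for a single $\omega>0$ also follows since the symbol $(\omega+\beta_k)^{-1}$ is then a well-defined bounded multiplier both ways. This places $\mathcal{A}_b \in \mathcal{H}(h^{2m+\alpha},h^\alpha,\kappa,\omega)$, and strong continuity of the semigroup comes for free because $h^{2m+\alpha}(\Tone) \dhook h^\alpha(\Tone)$ by Proposition~\ref{Thm:hAlphaCAlphaClosure}(a). \textbf{The main obstacle} I anticipate is bookkeeping rather than conceptual: getting the uniform-in-$\lambda$ scalar resolvent lower bound with the constant depending only on $c_1, c_2, m, \omega$ (and not on $\lambda$ or $k$), and cleanly transferring boundedness from the Besov/Hölder space $C^\alpha(\Tone)$ to its little-Hölder subspace $h^\alpha(\Tone)$ via density of $C^\infty(\Tone)$.
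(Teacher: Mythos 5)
Your proposal follows essentially the same route as the paper's proof: realize $(\lambda+\mathcal{A}_b)^{-1}$ as a Fourier multiplier, verify the two Marcinkiewicz conditions of Theorem~\ref{Thm:FourierMultipliers} uniformly in $\re\,\lambda\ge\omega$ (with $r=s=\alpha$ for the $|\lambda|$-estimate and $r=2m+\alpha$, $s=\alpha$ for the smoothing estimate, both controlled by $c_1,c_2,m,\omega$ via the sectoriality bound), use $B^{\sigma}_{\infty,\infty}(\Tone)=C^{\sigma}(\Tone)$ and a density argument (the paper's Lemma~\ref{Lem:Ctoh}; your trigonometric-polynomial variant works equally well) to descend to the little-H\"older scale, and read off membership in $\mathcal{H}(h^{2m+\alpha},h^{\alpha},\kappa,\omega)$. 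The only point to settle is the symbol itself: with the paper's convention $D=i\frac{d}{dx}$ one has $\mathcal{A}_b e_k = b\,k^{2m}e_k$ with no factor $(-1)^m$, so the sign ambiguity you flag disappears and your estimates $\re\beta_k\ge c_1 k^{2m}$, $|\beta_k|\le c_2 k^{2m}$ apply verbatim.
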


The method for proving this theorem is inspired by an argument presented by Escher and Matioc in \cite{EM08}, where they demonstrated that a particular third order operator, associated with Stokesian Hele-Shaw flow, generates an analytic semigroup on periodic little-H$\ddo$lder spaces.  Before we present the proof, we need to state a result which helps establish a connection between little-H$\ddo$lder spaces and Fourier multiplier results, stated earlier in the scale of Besov spaces. First, if we can apply Theorem~\ref{Thm:FourierMultipliers} for the case $p = q = \infty$, then the identification in Proposition~\ref{Thm:BisC} gives results on $C^s(\Tone)$ which we then need to connect with the little-H$\ddo$lder spaces $h^s(\Tone)$. 

\begin{lem}\label{Lem:Ctoh}
Suppose $T \in \mathcal{L}(C^{k + \alpha}(\Tone), C^{l + \alpha}(\Tone))$ such that\\ $T (C^{k + r}(\Tone)) \subset C^{l + r}(\Tone)$, for $k,l \in \mathbb{N}_0, \, \alpha \in \mathbb{R}_+ \setminus \mathbb{Z}$ and $r > \alpha$. Then $T \in \mathcal{L}(h^{k + \alpha}(\Tone), h^{l + \alpha}(\Tone))$.
\end{lem}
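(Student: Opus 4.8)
The plan is to exploit the characterization of little-H\"older spaces from Proposition~\ref{Thm:hAlphaCAlphaClosure}(a): $h^{k+\alpha}(\Tone)$ is precisely the closure of $C^{\sigma}(\Tone)$ in $C^{k+\alpha}(\Tone)$ for any $\sigma \in (k+\alpha, \infty]$, and likewise $h^{l+\alpha}(\Tone) = \overline{C^{\sigma'}(\Tone)}^{\,C^{l+\alpha}(\Tone)}$. Concretely, I would fix $\sigma := k + r$ and $\sigma' := l + r$ with $r > \alpha$; then $C^{k+r}(\Tone) = C^{\sigma}(\Tone)$ is dense in $h^{k+\alpha}(\Tone)$, and its image under $T$ lies in $C^{l+r}(\Tone)$ by hypothesis, which in turn sits inside $h^{l+\alpha}(\Tone)$ (since $C^{l+r} \hookrightarrow h^{l+\alpha}$ whenever $l+r > l+\alpha$, again by Proposition~\ref{Thm:hAlphaCAlphaClosure}(a)).

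The key step is then a routine density/continuity argument. First I would show $T(h^{k+\alpha}(\Tone)) \subset h^{l+\alpha}(\Tone)$: take $f \in h^{k+\alpha}(\Tone)$, pick a sequence $(f_n) \subset C^{k+r}(\Tone)$ with $f_n \to f$ in $C^{k+\alpha}(\Tone)$; then $Tf_n \to Tf$ in $C^{l+\alpha}(\Tone)$ by continuity of $T$ on the big H\"older spaces, and each $Tf_n \in C^{l+r}(\Tone) \subset h^{l+\alpha}(\Tone)$; since $h^{l+\alpha}(\Tone)$ is a closed subspace of $C^{l+\alpha}(\Tone)$, the limit $Tf$ belongs to $h^{l+\alpha}(\Tone)$. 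Second, the restriction $T|_{h^{k+\alpha}(\Tone)} : h^{k+\alpha}(\Tone) \to h^{l+\alpha}(\Tone)$ is automatically bounded: it is the restriction of a bounded operator to a closed subspace, with range in a closed subspace, so its norm is at most $\|T\|_{\mathcal{L}(C^{k+\alpha},C^{l+\alpha})}$. Hence $T \in \mathcal{L}(h^{k+\alpha}(\Tone), h^{l+\alpha}(\Tone))$.

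I do not anticipate a serious obstacle here; the only point requiring a little care is making sure the density statement of Proposition~\ref{Thm:hAlphaCAlphaClosure}(a) is invoked with a valid smoothness exponent on both the domain and target side — i.e. that $r > \alpha$ guarantees both $k + r > k + \alpha$ and $l + r > l + \alpha$ with $k+r, l+r \notin \mathbb{Z}$ may fail, so I would instead simply choose $\sigma = \infty$ (or any non-integer exponent strictly between $l+\alpha$ and $l+r$) to sidestep the integrality caveat, using that $C^{l+r}(\Tone) \hookrightarrow C^{l+\beta}(\Tone)$ for any $\beta \in (\alpha, r)$ and that the latter is contained in $h^{l+\alpha}(\Tone)$. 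With that choice the inclusions are unambiguous and the argument goes through verbatim.
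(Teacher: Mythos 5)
Your proposal is correct and follows essentially the same route as the paper's proof: approximate $f \in h^{k+\alpha}(\Tone)$ by a sequence in $C^{k+r}(\Tone)$, use continuity of $T$ on the H\"older spaces and the hypothesis $T(C^{k+r}) \subset C^{l+r}$, and conclude via $\overline{C^{l+r}(\Tone)}^{\,\|\cdot\|_{C^{l+\alpha}}} = h^{l+\alpha}(\Tone)$ from Proposition~\ref{Thm:hAlphaCAlphaClosure}(a). Your worry about integer exponents is unnecessary, since part (a) of that proposition allows any $\sigma \in (\theta,\infty]$ (only the dense-embedding corollary requires $\sigma \notin \mathbb{Z}$), but your workaround is harmless.
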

\begin{proof}
This result is a straight forward consequence of the dense embedding\\ $C^{l + r}(\Tone) \dhook h^{l + \alpha}(\Tone)$, c.f. Proposition~\ref{Thm:hAlphaCAlphaClosure}(a), we present the proof here for the readers convenience. First, notice that for $T \in \mathcal{L}(C^{k + \alpha}(\Tone), C^{l + \alpha}(\Tone))$, it follows that $T \in \mathcal{L}(h^{k + \alpha}(\Tone),C^{l + \alpha}(\Tone))$. Hence, it suffices to show that $T(h^{k + \alpha}(\Tone)) \subset h^{l + \alpha}(\Tone)$. Let $f \in h^{k + \alpha}(\Tone)$ and we can find $(f_j)_j \subset C^{k + r}(\Tone)$ such that $f_j \rightarrow f$ in $\| \cdot \|_{C^{k + \alpha}}$. Then $Tf_j \rightarrow Tf$ in $\| \cdot \|_{C^{l + \alpha}},$ by $T \in \mathcal{L}(C^{k + \alpha}(\Tone), C^{l + \alpha}(\Tone))$, and $Tf_j \in C^{l + r}(\Tone)$ for $j \in \mathbb{N}$, by assumption. Therefore, we have $Tf \in \overline{C^{l + r}(\Tone)}^{\| \cdot \|_{C^{l + \alpha}}} = h^{l + \alpha}(\Tone)$ and the lemma is proved.
\end{proof}

\begin{proof}[Proof of Theorem \ref{Thm:GenerationWithConstant}]
Fix $\alpha \in \mathbb{R}_+ \setminus \mathbb{Z}$, $\omega > 0$ and $b \in \mathbb{C}$ as indicated, in particular we assume that $b \in \Sigma \, (c_1,c_2) := \{ z \in \mathbb{C}: \re \, z \geq c_1 \} \cap \{ z \in \mathbb{C}: | z | \leq c_2 \}$. First, we realize the operator $- \mathcal{A}_b$ as a Fourier multiplier. Since
\[
\mathcal{A}_b \left( \sum_{k \in \mathbb{Z}} a_k e_k \right) 
= \sum_{k \in \mathbb{Z}}  b (i)^{2m} a_k (i k)^{2m} e_k = \sum_{k \in \mathbb{Z}}  b k^{2m} a_k e_k,
\]
we see that $-\mathcal{A}_b$ is associated with the multiplier symbol $(M_k)_k := (- b k^{2m})_k.$ 

\noindent {\bf Claim 1:} \emph{$(\lambda + \mathcal{A}_b) \in \mathcal{L}_{isom} (h^{2m + \alpha}(\Tone), h^{\alpha}(\Tone))$ for $\re \, \lambda \geq \omega$, i.e. 
	\[
	\rho(-\mathcal{A}_b) \supset \{ \lambda \in \mathbb{C}: \re \, \lambda \geq \omega \}.
	\]
	Moreover, the set $\{ \| (\lambda + \mathcal{A}_b)^{-1} \|_{\mathcal{L}(h^{\alpha},h^{2m + \alpha})}: \re \, \lambda \geq \omega \}$ is uniformly bounded by some $M_1 = M_1(\omega, c_1, c_2, m) < \infty$.}
	
First notice that $(\lambda + \mathcal{A}_b) \in \mathcal{L}(C^{2m + \sigma}(\Tone),C^{\sigma}(\Tone))$ is a natural consequence of the embedding $C^{2m + \sigma}(\Tone) \hookrightarrow C^{\sigma}(\Tone)$, for arbitrary $\sigma \in \mathbb{R}_+$. In particular, we see that
\[
\| (\lambda + \mathcal{A}_b)f \|_{C^{\sigma}} \leq |\lambda| \| f \|_{C^{\sigma}} + |b| \, \| f^{(2m)} \|_{C^{\sigma}} \leq (c(\sigma) \, |\lambda|  + c_2 )\| f \|_{C^{2m + \sigma}},
\]
where $c(\sigma) > 0$ is the embedding constant, i.e. $\| f \|_{C^{\sigma}} \leq c(\sigma) \, \| f \|_{C^{2m + \sigma}}$ for all $f \in C^{2m + \sigma}(\Tone)$. Now, we focus on showing continuous invertibility of the operator $(\lambda + \mathcal{A}_b)$. We will demonstrate invertibility in the classic H$\ddo$lder spaces, then apply Lemma~\ref{Lem:Ctoh} to get the stated result.

We use Theorem \ref{Thm:FourierMultipliers} and the identification $B^{\sigma}_{\infty,\infty}(\Tone) = C^{\sigma}(\Tone),$ for $\sigma \in \mathbb{R}_+ \setminus \mathbb{Z}$. In particular, let $\re \, \lambda \geq \omega$ and consider the symbol $\left( \tilde{M}_k(\lambda) \right)_k := \left( \frac{1}{\lambda + bk^{2m}} \right)_k,$ which we will show  
satisfies \eqref{Eqn:MultiplierAssumptions}, with $r = 2 m + \sigma$ and $s = \sigma$. Then $r - s = 2 m$ and we have,
\begin{align*}
	|k|^{2m}|\tilde{M}_k(\lambda)| = \frac{k^{2m}}{|\lambda + b k^{2m}|} & \leq \frac{k^{2m}}{\re \, b \, k^{2m}} \leq \frac{1}{\re \,b} \quad \text{for } k \in \mathbb{Z} \setminus \{ 0 \}\\
	\Longrightarrow \qquad  s_1 := \sup_{k \in \mathbb{Z} \setminus \{ 0 \}}& |k|^{r - s} |\tilde{M}_k(\lambda)| \leq \frac{1}{c_1} < \infty,
\end{align*}
and
\begin{align*}
	|k|^{2m + 1}|\tilde{M}_{k+1}(\lambda) - \tilde{M}_k(\lambda)| &= |k|^{2m + 1} \left| \frac{1}{\lambda + b (k + 1)^{2m}} - \frac{1}{\lambda + bk^{2m}} \right| \\
	&= \frac{|k|^{2m}}{|\lambda + b(k + 1)^{2m}|} \frac{|k|^{2m}}{|\lambda + bk^{2m}|} \frac{|b| |(k+1)^{2m} - k^{2m}|}{|k|^{2m - 1}}\\
	&\leq \frac{|k|^{2m}}{|\lambda + b(k + 1)^{2m}|} \frac{|b|}{\re \, b} \frac{|(k+1)^{2m} - k^{2m}|}{|k|^{2m - 1}}
\end{align*}
If $k = -1$, then this last term is equal $\frac{1}{|\lambda|}$, which is majorized by $\frac{1}{\omega}$. For all other $k \in \mathbb{Z} \setminus \{0 \}$, we eliminate dependence on $\lambda$, as in claim 1, so that we have 
\begin{align*}
	 s_2 := \sup_{k \in \mathbb{Z} \setminus \{ 0 \}}& |k|^{r - s + 1} \Big| \tilde{M}_{k + 1}(\lambda) - \tilde{M}_k(\lambda) \Big|\\
	&\hspace{-4.5em}\leq \bigg(\frac{1}{\omega} \vee \frac{c_2}{(c_1)^2} \bigg) \, \sup_{k \in \mathbb{Z} \setminus \{ -1 \}} \Bigg( \frac{|k|^{2m}}{|k + 1|^{2m}} \sum_{j=0}^{2m-1} \binom{2m}{j} |k|^{j - 2m + 1} \Bigg) < \infty.
\end{align*}
Hence, by Theorem~\ref{Thm:FourierMultipliers} we have $R(\lambda) \in \mathcal{L}(B^{r}_{p,q}(\Tone), B^{r + 2m}_{p,q}(\Tone))$ for any $1 \leq p, q \leq \infty$ and $r \in \mathbb{R}_+$, where $R(\lambda)$ is the operator associated with the symbol $\left( \tilde{M}_k(\lambda) \right)_k$. Taking $p = q = \infty$ and $r = \sigma$, we see $R(\lambda) \in \mathcal{L}(C^{\sigma}(\Tone), C^{2m + \sigma}(\Tone))$. Meanwhile, it holds that
\[
R(\lambda)(\lambda + \mathcal{A}_b)f = f \quad \text{and} \quad (\lambda + \mathcal{A}_b)R(\lambda)g = g, \quad \text{for} \quad f \in C^{2m + \sigma}(\Tone), \; g \in C^{\sigma}(\Tone),
\]
which demonstrates that $R(\lambda) = (\lambda + \mathcal{A}_b)^{-1}$ and
\begin{equation}
(\lambda + \mathcal{A}_b) \in \mathcal{L}_{isom}(C^{2m + \sigma}(\Tone), C^{\sigma}(\Tone)) \qquad  \text{for} \quad  \re \, \lambda \geq \omega, \, \sigma \in \mathbb{R}_+ \setminus \mathbb{Z}.
\end{equation} 
Now, it is clear that Claim 1 follows from Lemma \ref{Lem:Ctoh}.

Meanwhile, for any $\re \, \lambda \geq \omega$, notice that $s_1$, $s_2$ and so, by Theorem~\ref{Thm:FourierMultipliers}, the operator norm $\| (\lambda + \mathcal{A}_b)^{-1} \|_{\mathcal{L}(h^{\alpha},h^{2m + \alpha})}$ can be bounded by terms depending only on the constants $\omega, c_1, c_2$ and $m$.
In particular, there exists some $M_1 = M_1(\omega, c_1, c_2, m) < \infty$ such that $\| (\lambda + \mathcal{A}_b)^{-1} \|_{\mathcal{L}(h^{\alpha},h^{2m + \alpha})} \leq M_1$ for all $\re \, \lambda \geq \omega$ and for all $b \in \Sigma \, (c_1,c_2)$.

\noindent {\bf Claim 2:} \emph{$\lambda (\lambda + \mathcal{A}_b)^{-1} \in \mathcal{L}(h^{\alpha}(\Tone))$ for $\re \, \lambda \geq \omega,$ and there is an upper bound $M_2 = M_2(\omega, c_1, c_2, m) < \infty$ for the set $\left\{ | \lambda | \|(\lambda + \mathcal{A}_b)^{-1} \|_{\mathcal{L}(h^{\alpha}(\Tone))} : \re \, \lambda \geq \omega \right\}$.}

Fix $\re \, \lambda \geq \omega$ and notice that the operator $\lambda (\lambda + \mathcal{A}_b)^{-1}$ has the associated multiplier symbol $\left( \frac{ \lambda }{\lambda + b k^{2m}} \right)_k$. We established in Claim 1 that $(\lambda + \mathcal{A}_b)^{-1}$ is a well-defined operator mapping $h^{\alpha}(\Tone)$ into $h^{2m + \alpha}(\Tone)$. Now, by the embedding property, Theorem \ref{Thm:hAlphaCAlphaClosure}(a), we can also consider the mapping properties of $(\lambda + \mathcal{A}_b)^{-1}$ as an operator from $h^{\alpha}(\Tone)$ into itself. Again, we make use of Lemma \ref{Lem:Ctoh} and Theorem \ref{Thm:FourierMultipliers}, where now we are taking $r = s = \sigma$ and $p = q = \infty$. Moreover, we show that $s_1$ and $s_2$ can be bounded independent of $\re \, \lambda \geq \omega$.

Notice that we can find $\vartheta = \vartheta(c_1, c_2) \in (0,\frac{\pi}{2})$ such that $\Sigma \, (c_1,c_2) := \{ z \in \mathbb{C}: \re \, z \geq c_1 \} \cap \{ z \in \mathbb{C}: |z| \leq c_2 \} \subset S_{\vartheta} := \{ z \in \mathbb{C}: |\text{arg} \, z| < \vartheta \}$. 
Moreover, there exists a constant $C(\vartheta)$ such that $|\lambda + z| \geq |\lambda| / C(\vartheta) $ for all $z \in S_{\vartheta} \cup \{ 0 \}$, $\re \, \lambda > 0$, since $\vartheta < \frac{\pi}{2}$. In particular, we have 
\[
s_1 = \sup_{k \in \mathbb{Z} \setminus \{ 0 \}} \frac{|\lambda|}{|\lambda + b \, k^{2m}|} \leq C(\vartheta) \qquad \text{for all} \quad \re \, \lambda \geq \omega.
\] 
Now, considering $s_2$, we have the bound
\begin{align*}
|k| \bigg|& \frac{|\lambda|}{ \lambda + b (k + 1)^{2m}} - \frac{|\lambda|}{ \lambda + b k^{2m}}\bigg|\\
&= \frac{|\lambda|}{|\lambda + b (k+1)^{2m}|} \frac{k^{2m}}{| \lambda + bk^{2m}|}\frac{|b||(k + 1)^{2m} - k^{2m}|}{|k|^{2m - 1}}\\
&\leq C(\vartheta) \, \frac{k^{2m}}{\re \, b \, k^{2m}} \, \frac{|b|((k+1)^{2m} - k^{2m})}{|k|^{2m - 1}}\\ 
&\leq C(\vartheta) \, \frac{c_2}{c_1} \, \frac{(k+1)^{2m} - k^{2m}}{|k|^{2m - 1}},
\end{align*}
for $k \in \mathbb{Z} \setminus \{ 0 \}$. Hence, 
\[
s_2 \leq \left( C(\vartheta) \, \frac{c_2}{c_1} \right)  \, \sup_{k \in \mathbb{N}} \left( \sum_{j=0}^{2m-1} \binom{2m}{j} k^{j - 2m + 1} \right) < \infty,
\]
again uniformly in $\re \, \lambda \geq \omega$. Now we see that $\lambda (\lambda + \mathcal{A}_b)^{-1} \in \mathcal{L}(C^{\sigma}(\Tone), C^{2m + \sigma}(\Tone))$ holds by application of Theorem \ref{Thm:FourierMultipliers}, for $\lambda \geq \omega$, $\sigma \in \mathbb{R}_+ \setminus \mathbb{Z}$. Hence, the claim holds by Lemma~\ref{Lem:Ctoh} and we fix a constant $M_2 = M_2(\omega, c_1, c_2, m) < \infty$ such that $\|(\lambda + \mathcal{A}_b)^{-1}\|_{\mathcal{L}(h^{\alpha})} \leq M_2 / |\lambda|$ holds uniformly for $\re \, \lambda \geq \omega$ and $b \in \! \Sigma \, (c_1,c_2)$.

By claims 1 and 2, we see that $- \mathcal{A}_b$ satisfies the conditions necessary to generate an analytic semigroup, see Amann \cite[Theorem 1.2.2]{AM95} for instance. Moreover, if we choose $\kappa = \kappa(c_1, c_2, \omega) \geq 2 \big(M_1 \vee M_2 \big) \vee \big( 1 \vee c_2 \big)$ it holds that
\[
\kappa^{-1} \leq \frac{\| (\lambda + \mathcal{A}_b) f \|_{h^{\alpha}(\Tone)}}{|\lambda|\| f \|_{h^{\alpha}(\Tone)} + \| f \|_{h^{2m + \alpha}(\Tone)}} \leq \kappa, \qquad f \in h^{2m + \alpha}(\Tone) \setminus \{ 0 \}, \; \re \, \lambda \geq \omega. 
\]
Hence, we see that $\mathcal{A}_b \in \mathcal{H}(h^{\alpha}(\Tone),h^{2m + \alpha}(\Tone), \kappa, \omega)$, as claimed.

\end{proof}

\section{Partition and Generation Result}\label{Section:PartitionAndGeneration}

Now that we have generation results for the operator with constant coefficients, we can extend these results to variable coefficients through the following partition and perturbation argument. Here we consider the operator
\begin{equation}\label{Eqn:OperatorDefined}
\mathcal{A}_p := \mathcal{A}_p(\cdot,D) := b(\cdot)D^{2m}, \qquad \text{for} \quad b \in \mathbb{C}^{\Tone},
\end{equation}
and we assume that $\mathcal{A}_p$ satisfies the conditions of uniform ellipticity \eqref{Eqn:UniformEllipticity}.
We will show that, under minimal regularity assumptions on the coefficient function $b$, 
$\mathcal{A}_p$ generates an analytic semigroup on $h^{\alpha}(\Tone)$ with domain $h^{2m + \alpha}(\Tone)$. 

For the following localization argument, we make use of the fact that $\Tone$ is isomorphic to the (additive) quotient group $\mathbb{R} / 2 \pi \mathbb{Z}$. In particular, for $x \in \Tone$, we consider the associated coset $[x] \in \mathbb{R} / 2 \pi \mathbb{Z}$, $[x] := \{ x + 2 \pi k: k \in \mathbb{Z} \}$. Note that the element $x \in \Tone$ is the unique member of the coset $[x]$ contained in the interval $[-\pi, \pi]$; except in the notable case $x = \{\pi, - \pi \}$, where the points $\pi$ and $-\pi$ are both members of the coset $[\pi]$ and they are identified in $\Tone$. Moreover, for $x \in \Tone$ we see that the inverse element $-[x] \in \mathbb{R} / 2 \pi \mathbb{Z}$ corresponds to $- x \in \Tone$. Then, for $z \in \Tone$, define the translation operator $T_z(y) := y - z,$ where $y - z \in \Tone$ is the unique element in $\Tone$ associated with the coset $[y] - [z] \in \mathbb{R} / 2 \pi \mathbb{Z}$. Note that the metric $\dT$ is invariant under translations on $\Tone$, i.e. $\dT(T_z(x),T_z(y)) = \dT(x,y)$ for any $x,y,z \in \Tone$.


\subsection{Localized Coefficient}

We begin by localizing the function $b$ to open sets of the form $\mathbb{B}_{\Tone}(z, \varepsilon)$, for $z \in \Tone$ and $\varepsilon \in (0,\frac12)$. We define cut-off functions and `local retractions' which work together to accomplish this goal. For the cut-off functions, choose $X \in C^1 (\Tone)$ such that 
\begin{equation*}\label{Eqn:CutOffXConditions}
\text{supp} \, X \subset (-1, 1) \quad \text{and} \quad X|_{[- \frac12, \frac12]} \equiv 1.
\end{equation*}
Then, define $X_z := X \circ T_z$ -- the cut-off function centered at $z \in \Tone$ -- and notice that $X_z \in C^1(\Tone)$ with supp$(X_z) \subset \mathbb{B}_{\Tone} (z,1)$ for every $z \in \Tone$.

For our `local retractions' we define $r_{\varepsilon} : [-1,1] \rightarrow [-\varepsilon, \varepsilon]$, for $\varepsilon \in (0,\frac12)$, as
\begin{equation}\label{Eqn:LocalRetractionDefined}
r_{\varepsilon}(x) := \begin{cases} x & \text{if $x \in [-\varepsilon, \varepsilon]$},\\ 
\varepsilon & \text{if $x \in (\varepsilon, 1]$},\\
- \varepsilon & \text{if $x \in [- 1, - \varepsilon)$}.  \end{cases}
\end{equation}
Then, for $z \in \Tone$ arbitrary, we define $r_{z, \varepsilon} := T_{-z} \circ r_{\varepsilon} \circ T_z$, the local retraction centered at $z$, which maps the closed neighborhood $\overline{\mathbb{B}}_{\Tone}(z, 1)$ to $\overline{\mathbb{B}}_{\Tone}(z, \varepsilon)$. 

\begin{prop}\label{Prop:RisLipschitz}
For $\varepsilon \in (0, \frac12)$, $r_{\varepsilon}$ is Lipschitz continuous from $[-1,1]$ to $[- \varepsilon,  \varepsilon]$, with Lipschitz constant 1. Consequently, $r_{z, \varepsilon}$ is Lipschitz continuous (with respect to the metric $\dT$) from $\overline{\mathbb{B}}_{\Tone}(z,1)$ to $\overline{\mathbb{B}}_{\Tone}(z,  \varepsilon)$ for all $z \in \Tone$, $\varepsilon \in (0, \frac12)$.
\end{prop}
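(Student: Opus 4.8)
The plan is to prove the one-dimensional statement for $r_{\varepsilon}$ first by an elementary argument, and then transport it to $\Tone$ using that translations are $\dT$-isometries together with the observation that $\dT$ coincides with the Euclidean metric on balls of radius at most $\pi$. For $r_{\varepsilon}$ itself, I would observe that on $[-1,1]$ one has $r_{\varepsilon}(x) = (x \wedge \varepsilon) \vee (-\varepsilon)$, i.e. $r_{\varepsilon}$ is the truncation (clamp) of the identity onto $[-\varepsilon,\varepsilon]$. Since $t \mapsto t \wedge \varepsilon$ and $t \mapsto t \vee (-\varepsilon)$ are each $1$-Lipschitz on $\mathbb{R}$ and a composition of $1$-Lipschitz maps is $1$-Lipschitz, the bound $|r_{\varepsilon}(x) - r_{\varepsilon}(y)| \le |x - y|$ follows immediately; alternatively one checks this directly on the finitely many cases determined by the positions of $x,y$ relative to $[-1,-\varepsilon)$, $[-\varepsilon,\varepsilon]$, $(\varepsilon,1]$, where in each case either both values are clamped to the same constant (left side $0$) or monotonicity of $r_{\varepsilon}$ yields the estimate. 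Since $\varepsilon < \tfrac12 < 1$, the image is exactly $[-\varepsilon,\varepsilon]$, so $r_{\varepsilon}$ is $1$-Lipschitz from $[-1,1]$ onto $[-\varepsilon,\varepsilon]$, and no Lipschitz constant smaller than $1$ works (it is the identity near $0$).

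For the consequence, fix $z \in \Tone$ and $\varepsilon \in (0,\tfrac12)$. Because $1 < \pi$, for $y \in \Tone$ one has $\dT(0,y) = |y| \wedge (2\pi - |y|) = |y|$ whenever $|y| \le 1$, so $\overline{\mathbb{B}}_{\Tone}(0,1)$ is precisely the set $[-1,1] \subset \Tone$ and $\dT$ restricted to it agrees with $|\cdot - \cdot|$; the same holds, with $\varepsilon$ replacing $1$, for $\overline{\mathbb{B}}_{\Tone}(0,\varepsilon) = [-\varepsilon,\varepsilon]$. Recalling that $T_z$ is a $\dT$-isometry of $\Tone$ carrying $\overline{\mathbb{B}}_{\Tone}(z,1)$ onto $\overline{\mathbb{B}}_{\Tone}(0,1)$ and that $T_{-z} = (T_z)^{-1}$ carries $\overline{\mathbb{B}}_{\Tone}(0,\varepsilon)$ onto $\overline{\mathbb{B}}_{\Tone}(z,\varepsilon)$, the composition $r_{z,\varepsilon} = T_{-z} \circ r_{\varepsilon} \circ T_z$ is well-defined as a map $\overline{\mathbb{B}}_{\Tone}(z,1) \to \overline{\mathbb{B}}_{\Tone}(z,\varepsilon)$. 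Then for $x,y \in \overline{\mathbb{B}}_{\Tone}(z,1)$, writing $x' := T_z x$ and $y' := T_z y$ (so $x',y' \in [-1,1]$),
\[
\dT\big(r_{z,\varepsilon}(x),\, r_{z,\varepsilon}(y)\big)
= \dT\big(r_{\varepsilon}(x'),\, r_{\varepsilon}(y')\big)
= |r_{\varepsilon}(x') - r_{\varepsilon}(y')|
\le |x' - y'|
= \dT(x',y')
= \dT(x,y),
\]
where the first equality uses that $T_{-z}$ preserves $\dT$, the second that $r_{\varepsilon}(x'),r_{\varepsilon}(y') \in [-\varepsilon,\varepsilon]$ where $\dT$ is Euclidean, the inequality is the one-dimensional claim, and the last two equalities use that $x',y' \in [-1,1]$ (where $\dT$ is Euclidean) and that $T_z$ preserves $\dT$. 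This gives the asserted Lipschitz bound with constant $1$.

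The only genuine subtlety — and so the main, albeit minor, obstacle — is the bookkeeping that identifies $\dT$ with $|\cdot - \cdot|$ on the relevant small balls, which is what allows $r_{\varepsilon}$, a map defined on the interval $[-1,1] \subset \mathbb{R}$, to be legitimately pre- and post-composed with the torus translations; once that identification is recorded, the Lipschitz estimate is an immediate consequence of translation invariance of $\dT$ and the elementary bound for the clamp function.
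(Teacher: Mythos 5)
Your proposal is correct and follows essentially the same route as the paper's proof: verify the $1$-Lipschitz bound for $r_{\varepsilon}$ on $[-1,1]$ (your clamp-composition observation is a tidy packaging of the paper's case check), note that $\dT$ coincides with the Euclidean distance on $[-1,1]$, and then transfer the estimate via the translation invariance of $\dT$ under $T_z$. No gaps; the bookkeeping you flag about identifying $\dT$ with $|\cdot-\cdot|$ on the small balls is exactly the point the paper also records.
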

\begin{proof}
By considering cases for points $x, y \in \Tone$, the first claim is easily verified. Furthermore, notice that $\dT(x,y) = |x - y|$ for $x, y \in [-1,1]$, so that $r_{\varepsilon}$ is Lipschitz in the metric $\dT$ on $[-1,1]$. Then the second claim follows from invariance of the metric $\dT$ under translations $T_z$, for $z \in \Tone$.
\end{proof}

Now, given a function $b \in \mathbb{C}^{\Tone}$, we combine these `local retractions' and `cut-off' functions to define the functions
\[
b_{z,\varepsilon}(x) := \begin{cases} X_z(x) \left[ b \circ r_{z,\varepsilon}(x) - b(z) \right] & \text{ if $x \in \mathbb{B}_{\Tone}(z,1)$,}\\
0 & \text{ otherwise, }  \end{cases} \qquad z \in \Tone, \, \varepsilon \in (0,{\textstyle \frac12}),
\]
which essentially compare the local behavior of $b$ against a fixed value $b(z)$.
Before we make use of these `localized coefficients', we establish the following results regarding their regularity.

\begin{lem}\label{Lem:PerturbedBhAlpha} Let $b \in h^{\alpha}(\Tone)$ for $\alpha \in (0,1)$. Then the following results hold:\hspace{1em}
\begin{enumerate}
	\vspace{.3 em}
	\item $b_{z, \varepsilon} \in h^{\alpha}(\Tone)$ for $\varepsilon \in (0,\frac12), \; z \in \Tone$,
	\vspace{.5 em}
	\item $\displaystyle \lim_{\varepsilon \rightarrow 0^+} \sup_{z \in \Tone} \| b_{z, \varepsilon} \|_{h^{\alpha}} = 0$.
\end{enumerate}
\end{lem}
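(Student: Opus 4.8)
The plan is to analyze $b_{z,\varepsilon}$ by decomposing the $h^\alpha$-norm into the sup-norm part and the H\"older seminorm part, and to exploit the two structural features of $b_{z,\varepsilon}$: it is a product of the fixed $C^1$ cut-off $X_z$ with the "difference function" $g_{z,\varepsilon} := (b\circ r_{z,\varepsilon} - b(z))$, and the latter is small in sup-norm on the support of $X_z$ because $r_{z,\varepsilon}$ confines its argument to $\overline{\mathbb{B}}_{\Tone}(z,\varepsilon)$. For part (a), the key observations are: $b\circ r_{z,\varepsilon} \in C^\alpha(\Tone)$ since $r_{z,\varepsilon}$ is Lipschitz (Proposition~\ref{Prop:RisLipschitz}) and $b \in h^\alpha(\Tone) \subset C^\alpha(\Tone)$ — composition of a $C^\alpha$ function with a Lipschitz map is again $C^\alpha$; constants are in $h^\alpha$; $X_z \in C^1(\Tone) \subset h^\alpha(\Tone)$ since $C^1 \hookrightarrow h^\alpha$ for $\alpha \in (0,1)$; and $h^\alpha(\Tone)$ is a Banach algebra (stated in Section~\ref{Section:PeriodicFunctions}). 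One still has to check that the piecewise definition (the function is literally $0$ outside $\mathbb{B}_{\Tone}(z,1)$) glues to a genuine element of $h^\alpha(\Tone)$: this is fine because $X_z$ vanishes on a neighborhood of the boundary of its support, so $b_{z,\varepsilon}$ agrees near that boundary with the globally-defined $h^\alpha$ function $X_z \cdot (\widetilde{b\circ r_{z,\varepsilon}} - b(z))$ suitably extended, and $b\circ r_{z,\varepsilon}$ is bounded; I would phrase this by noting $b_{z,\varepsilon}$ coincides with a product of two $h^\alpha(\Tone)$ functions globally once $b\circ r_{z,\varepsilon}$ is extended to all of $\Tone$ by the constant value it already takes on $\partial\overline{\mathbb{B}}_{\Tone}(z,\varepsilon)$-fibers outside $\mathbb{B}_{\Tone}(z,1)$, which is harmless since $X_z$ kills it there.

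For part (b), I would estimate $\|b_{z,\varepsilon}\|_{h^\alpha} = \|b_{z,\varepsilon}\|_{C(\Tone)} + [b_{z,\varepsilon}]_{\alpha,\Tone}$ with bounds uniform in $z$. For the sup-norm: for $x \in \mathbb{B}_{\Tone}(z,1)$ we have $|b_{z,\varepsilon}(x)| \le |X_z(x)|\,|b(r_{z,\varepsilon}(x)) - b(z)| \le \|X\|_{C(\Tone)}\,[b]_{\alpha,\Tone}\,\dT^\alpha(r_{z,\varepsilon}(x),z) \le \|X\|_{C(\Tone)}\,[b]_{\alpha,\Tone}\,\varepsilon^\alpha$, using that $r_{z,\varepsilon}$ maps into $\overline{\mathbb{B}}_{\Tone}(z,\varepsilon)$. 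This already gives $\sup_z \|b_{z,\varepsilon}\|_{C(\Tone)} \le C\varepsilon^\alpha \to 0$. For the H\"older seminorm I would use the Banach-algebra / Leibniz-type bound $[fg]_{\alpha} \le \|f\|_{C}[g]_\alpha + \|g\|_C[f]_\alpha$ applied to $f = X_z$, $g = g_{z,\varepsilon}$, so that
\[
[b_{z,\varepsilon}]_{\alpha,\Tone} \le \|X\|_{C(\Tone)}\,[g_{z,\varepsilon}]_{\alpha,\Tone} + \|g_{z,\varepsilon}\|_{C(\mathbb{B}_{\Tone}(z,1))}\,[X_z]_{\alpha,\Tone},
\]
where the second term is already $\le C\varepsilon^\alpha [X]_{\alpha,\Tone}$ by the sup bound above and translation-invariance of $\dT$. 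The remaining term $[g_{z,\varepsilon}]_{\alpha,\Tone} = [b\circ r_{z,\varepsilon}]_{\alpha,\Tone}$ is the crux: na\"ively it is only bounded by $[b]_{\alpha,\Tone}$ (Lipschitz constant of $r_{z,\varepsilon}$ is $1$), which does \emph{not} go to $0$. Here I would invoke that $b \in h^\alpha(\Tone)$, not merely $C^\alpha$: by \eqref{IntrisicLittleHolder}, given $\eta > 0$ there is $\delta > 0$ so that $|b(u)-b(v)| \le \eta\,\dT^\alpha(u,v)$ whenever $\dT(u,v) < \delta$. Since $r_{z,\varepsilon}$ is $1$-Lipschitz and has image of diameter $\le 2\varepsilon$, for $\varepsilon < \delta/2$ all pairs $r_{z,\varepsilon}(x), r_{z,\varepsilon}(y)$ lie within distance $< \delta$, hence $|g_{z,\varepsilon}(x) - g_{z,\varepsilon}(y)| \le \eta\,\dT^\alpha(r_{z,\varepsilon}(x),r_{z,\varepsilon}(y)) \le \eta\,\dT^\alpha(x,y)$, giving $[g_{z,\varepsilon}]_{\alpha,\Tone} \le \eta$ uniformly in $z$. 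Combining, $\sup_z \|b_{z,\varepsilon}\|_{h^\alpha} \le C\varepsilon^\alpha + C\eta$ for $\varepsilon$ small, and letting $\varepsilon \to 0^+$ then $\eta \to 0$ gives the claim.

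The main obstacle is precisely this last point: controlling $[b\circ r_{z,\varepsilon}]_{\alpha,\Tone}$ uniformly in $z$ and showing it is $o(1)$ as $\varepsilon \to 0$. The Lipschitz bound from Proposition~\ref{Prop:RisLipschitz} alone is insufficient, and the little-H\"older hypothesis on $b$ is exactly what rescues the argument — this is where the density/intrinsic characterization \eqref{IntrisicLittleHolder} of $h^\alpha(\Tone)$ must be used, and it is also why the hypothesis is $b \in h^\alpha$ rather than $b \in C^\alpha$. A minor technical care point throughout is the gluing across $\partial\,\mathrm{supp}(X_z)$ and handling pairs $x,y$ where one lies inside $\mathbb{B}_{\Tone}(z,1)$ and the other outside when estimating the seminorm; this is routine since $X_z$ (hence $b_{z,\varepsilon}$) vanishes to first order at the support boundary, but it should be mentioned. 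The uniformity in $z$ is automatic from translation-invariance of $\dT$ (noted after the definition of $T_z$) together with the fact that $X$ and $r_\varepsilon$ are fixed model functions.
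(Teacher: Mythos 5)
Your argument is correct and is essentially the paper's proof in a slightly different dress: the paper carries out the same Leibniz-type splitting $X_z(x)\big(b(r_{z,\varepsilon}(x))-b(r_{z,\varepsilon}(y))\big)+\big(X_z(x)-X_z(y)\big)\big(b(r_{z,\varepsilon}(y))-b(z)\big)$ as a two-case estimate (inside the ball, and one point inside/one outside), and your crux quantity (the small-scale modulus $\eta$ at scale $\delta>2\varepsilon$) is exactly its key fact that $\sup_{z\in\Tone}[\,b\,]_{\alpha,\overline{\mathbb{B}}(z,\varepsilon)}\to 0$ as $\varepsilon\to 0^+$, both being restatements of the intrinsic little-H\"older characterization \eqref{IntrisicLittleHolder}. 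The only point to tighten is in part (a): to invoke the Banach-algebra property of $h^{\alpha}(\Tone)$ you need the factor $b\circ r_{z,\varepsilon}-b(z)$ (suitably cut off, since any global extension of $r_{z,\varepsilon}$ jumps between $z+\varepsilon$ and $z-\varepsilon$ at the antipode of $z$, so it is not literally one constant there) to be little-H\"older and not merely $C^{\alpha}$, which holds for every fixed $\varepsilon$ by the same observation you use in (b), namely that composition with the $1$-Lipschitz map $r_{z,\varepsilon}$ preserves the vanishing small-scale modulus.
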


\begin{proof}

First notice, since $b \in h^{\alpha}(\Tone)$, it follows from the intrinsic characterization of little-H$\ddo$lder
spaces \eqref{IntrisicLittleHolder} that
for $\varepsilon \in (0,\frac12)$, there exists $C(\varepsilon) > 0$ such that 
\begin{equation}\label{GoesToZero}
\sup_{z \in \Tone} [ b \, ]_{\alpha, \overline{\mathbb{B}}(z, \varepsilon)} = C(\varepsilon) \longrightarrow 0 \quad \text{as} \; \varepsilon \rightarrow 0^+.
\end{equation}

\medskip
\noindent
Now, let $z \in \Tone$ be a fixed sample point and $\varepsilon \in (0,\frac12)$. To see that $b_{z,\varepsilon}$ has the necessary regularity, we make use of Proposition~\ref{Prop:IntrinsicDef}(c) and Proposition~\ref{Prop:RisLipschitz}. In particular, let $x, y \in \Tone$ and consider the following cases: 
\begin{itemize} \vspace{.5em}
	
	\item {$x,y \in \mathbb{B}_{\Tone}(z, 1)$:} Then $r_{z,\varepsilon}(x), r_{z,\varepsilon}(y) \in \overline{\mathbb{B}}_{\Tone}(z, \varepsilon)$ and
		\begin{align}
			\nonumber | b_{z,\varepsilon}(x&) - b_{z,\varepsilon}(y) | = \left| X_z(x) \big( b(r_{z,\varepsilon}(x)) - b(z) \big) - X_z(y) \big( b(r_{z,\varepsilon}(y)) - b(z) \big)  \right|\\
			\nonumber &\leq | X_z(x)||b(r_{z,\varepsilon}(x)) - b(r_{z,\varepsilon}(y))| + | X_z(x) - X_z(y) || b(r_{z,\varepsilon}(y)) - b(z) | \\
			\nonumber &\leq \Big( \| X_z \|_{C(\Tone)} \dT^{\alpha}(r_{z,\varepsilon}(x),r_{z,\varepsilon}(y)) + \| X_z' \|_{C(\Tone)} \, \dT(x,y) \, \dT^{\alpha}(r_{z,\varepsilon}(y),z) \Big) [ b \, ]_{\alpha, \overline{\mathbb{B}}(z, \varepsilon)}\label{Eqn:Case1}\\
			&\leq \Big( \| X_z \|_{C(\Tone)} + \| X_z' \|_{C(\Tone)} \; \dT^{1 - \alpha}(x,y) \; \varepsilon^{\alpha} \Big) [ b \, ]_{\alpha, \overline{\mathbb{B}}(z, \varepsilon)} \dT^{\alpha}(x,y)
		\end{align} \vspace{.5em}

	\item {$x \in \mathbb{B}_{\Tone}(z, 1), \, y \in \Tone \setminus \mathbb{B}_{\Tone}(z,1)$:} Then $X_z(y) = 0$ and
		\begin{align}
			\nonumber | b_{z,\varepsilon}(x) &- b_{z,\varepsilon}(y) | = | b_{z,\varepsilon}(x) | = | X_z(x) \big( b( r_{z,\varepsilon}(x)) - b(z) \big)|\\
			\nonumber &\leq |X_z(x) - X_z(y)| \; \dT^{\alpha}(r_{z,\varepsilon}(x),z) \; [ b \, ]_{\alpha, \overline{\mathbb{B}}(z, \varepsilon)}\\
			\nonumber &\leq  \| X_z' \|_{C(\Tone)} \; \dT(x,y) \; \varepsilon^{\alpha} \; [ b \, ]_{\alpha, \overline{\mathbb{B}}(z, \varepsilon)}\\
			&\leq \Big( \| X_z' \|_{C(\Tone)} \; \dT^{1 - \alpha}(x,y) \; \varepsilon^{\alpha}  \Big) \; [ b \, ]_{\alpha, \overline{\mathbb{B}}(z, \varepsilon)} \; \dT^{\alpha}(x,y)\label{Eqn:Case2}
		\end{align} \vspace{.5em}
		
\end{itemize}
Together with the trivial case $x, y \in \Tone \setminus \mathbb{B}_{\Tone} (z, 1)$ -- where $X_z(x) = X_z(y) = 0$ -- this is enough to see that $b_{z, \varepsilon} \in C^{\alpha}(\Tone)$ with the $\alpha$-H$\ddo$lder norm of $b_{z, \varepsilon}$ bounded as
\begin{equation*}\label{Eqn:bejAlphaNormBound}
[ b_{z, \varepsilon} ]_{\alpha, \Tone} \leq \big( \| X_z \|_{C(\Tone)} + \varepsilon^{\alpha} \; \pi^{1 - \alpha} \; \| X_z' \|_{C(\Tone)} \big) \; [ b \, ]_{\alpha, \overline{\mathbb{B}}(z, \varepsilon)}.
\end{equation*}
Furthermore, we can see that $\displaystyle \| b_{z,\varepsilon} \|_{C(\Tone)} \leq  \varepsilon^{\alpha} \; \| X_z \|_{C(\Tone)} \; [ b \, ]_{\alpha, \overline{\mathbb{B}}(z, \varepsilon)}$ so that the $C^{\alpha}$-norm of $b_{z, \varepsilon}$ is bounded as
\begin{equation}\label{Eqn:hAlphaNormBound}
\| b_{z,\varepsilon} \|_{C^{\alpha}} \leq \big( \left( 1 + \varepsilon^{\alpha} \right) \| X_z \|_{C(\Tone)} + \varepsilon^{\alpha} \, \pi^{1 - \alpha} \, \| X_z' \|_{C(\Tone)} \big) [ b ]_{\alpha, \overline{\mathbb{B}}(z, \varepsilon)}.
\end{equation}
Hence, by the property \eqref{GoesToZero} and the  
inequalities \eqref{Eqn:Case1} and \eqref{Eqn:Case2}, we see that 
\[
\lim_{\delta \rightarrow 0^+} \sup_{\substack{ x,y \in \Tone\\ 0 < \dT(x,y) < \delta}} \frac{|b_{z,\varepsilon}(x) - b_{z,\varepsilon}(y)|}{\dT^{\alpha}(x,y)} = 0,
\]
which demonstrates $b_{z,\varepsilon} \in h^{\alpha}(\Tone)$ as claimed in (a).
Now the second claim follows from \eqref{GoesToZero} and \eqref{Eqn:hAlphaNormBound}.

\end{proof}


\subsection{Partition and Generation Result}

For $\varepsilon \in (0, \frac12)$, let $n(\varepsilon) := \left\lceil  \frac{2 \pi}{\varepsilon}\right\rceil$, where $\left\lceil a \right\rceil$ denotes the smallest integer $n$ such that $n \geq a$, $a \in \mathbb{R}$. Now, let $\{ x_{\varepsilon, j} : j = 1, \ldots, n(\varepsilon) \} \subset \Tone$ be a collection of sample points from $\Tone$ so that $x_{\varepsilon, 1} = - \pi$ and $x_{\varepsilon, j} = x_{\varepsilon, (j-1)} + \varepsilon, \, j = 2, \ldots, n(\varepsilon) $. Further, define $\Omega_{\varepsilon} := \{ \mathbb{B}_{\Tone}(x_{\varepsilon, j}, \varepsilon): j = 1, \ldots, n(\varepsilon) \},$ which is a finite open cover for $\Tone$, and let $\Pi_{\varepsilon} := \{ \pi^2_{\varepsilon, j} \} \subset C^{\infty}(\Tone)$ be a resolution of unity subordinate to $\Omega_{\varepsilon}.$ In particular, $\Pi_{\varepsilon}$ is a collection of infinitely differentiable functions such that
\[
\text{supp} (\pi_{\varepsilon, j}) \subset \mathbb{B}_{\Tone}(x_{\varepsilon, j}, \varepsilon), \; j = 1, \ldots, n(\varepsilon), 
\qquad \text{and} \qquad
\sum_{j = 1}^{n(\varepsilon)} \pi_{\varepsilon, j}^2 (x) = 1, \; x \in \Tone.
\]

\noindent
Now we are prepared to prove the following result, which is a generalization of Theorem~\ref{Thm:GenerationWithConstant} to the case of non-constant coefficients. The method of the proof is motivated by results in \cite{AM01,AMHS94}.

\begin{lem}\label{MainResult}
Let $m \in \mathbb{N}$, $\alpha \in \mathbb{R}_+ \setminus \mathbb{Z}$ and consider the differential operator $\mathcal{A}_p := \mathcal{A}_p(\cdot,D) := b(\cdot) \,  D^{2m}$ with coefficient $b \in h^{\alpha}(\Tone)$. If $\mathcal{A}_p$ is uniformly elliptic, 
then $- \mathcal{A}_p$ generates a (strongly continuous) analytic semigroup on $h^{\alpha}(\Tone)$ with domain $h^{2m + \alpha}(\Tone)$.
i.e. $\mathcal{A}_p \in \mathcal{H}(h^{2m + \alpha}(\Tone),h^{\alpha}(\Tone))$.
\end{lem}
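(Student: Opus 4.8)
The plan is to bootstrap from the constant-coefficient case, Theorem~\ref{Thm:GenerationWithConstant}, by a localization (freezing-of-coefficients) argument built on the quadratic resolution of unity $\{\pi_{\varepsilon,j}^2\}$ subordinate to the cover $\Omega_\varepsilon$. Uniform ellipticity of $\mathcal{A}_p$ forces $b(\Tone)\subset\Sigma(c_1,c_2)$ for suitable $0<c_1\le c_2$, so each ``frozen'' operator $\mathcal{A}_{b_j}:=b(x_{\varepsilon,j})D^{2m}$ satisfies, by Theorem~\ref{Thm:GenerationWithConstant}, $\mathcal{A}_{b_j}\in\mathcal{H}\big(h^{2m+\alpha}(\Tone),h^{\alpha}(\Tone),\kappa_0,\omega\big)$ with $\kappa_0=\kappa_0(\omega,c_1,c_2,m)$ \emph{independent} of $j$ and of $\varepsilon$; in particular, for $\re\,\lambda\ge\omega$ the resolvents $(\lambda+\mathcal{A}_{b_j})^{-1}$ exist with $j$- and $\varepsilon$-uniform bounds $\|(\lambda+\mathcal{A}_{b_j})^{-1}\|_{\mathcal{L}(h^{\alpha},h^{2m+\alpha})}\le M_1$ and $|\lambda|\,\|(\lambda+\mathcal{A}_{b_j})^{-1}\|_{\mathcal{L}(h^{\alpha})}\le M_2$. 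The goal will be to produce constants $\kappa\ge1$ and $\omega'\ge\omega$ with $\mathcal{A}_p\in\mathcal{H}\big(h^{2m+\alpha}(\Tone),h^{\alpha}(\Tone),\kappa,\omega'\big)$, which by Amann's characterization recalled before Theorem~\ref{Thm:GenerationWithConstant} yields the assertion; strong continuity is then automatic, since $h^{2m+\alpha}(\Tone)\dhook h^{\alpha}(\Tone)$ by Proposition~\ref{Thm:hAlphaCAlphaClosure}(a).

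Concretely, I would fix $\varepsilon\in(0,\tfrac12)$, write $b_j:=b(x_{\varepsilon,j})$, and introduce the approximate resolvent
\[
R_\lambda:=\sum_{j=1}^{n(\varepsilon)}\pi_{\varepsilon,j}\,(\lambda+\mathcal{A}_{b_j})^{-1}\,\pi_{\varepsilon,j},\qquad \re\,\lambda\ge\omega,
\]
which is well defined in $\mathcal{L}\big(h^{\alpha}(\Tone),h^{2m+\alpha}(\Tone)\big)$ because $\pi_{\varepsilon,j}\in C^{\infty}(\Tone)\subset h^{\alpha}(\Tone)$ and $h^{\alpha}(\Tone)$ is a Banach algebra. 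Using $\sum_j\pi_{\varepsilon,j}^2=1$, the Leibniz rule, and the splitting $(\lambda+\mathcal{A}_p)v=(\lambda+\mathcal{A}_{b_j})v+(b-b_j)D^{2m}v$ with $v:=(\lambda+\mathcal{A}_{b_j})^{-1}\pi_{\varepsilon,j}g$, one computes $(\lambda+\mathcal{A}_p)R_\lambda=I+S_\lambda$ on $h^{\alpha}(\Tone)$, where $S_\lambda$ collects, summed over $j$, the frozen-coefficient errors $\pi_{\varepsilon,j}(b-b_j)D^{2m}v$ and the commutator terms $[\mathcal{A}_p,\pi_{\varepsilon,j}]v=b\,[D^{2m},\pi_{\varepsilon,j}]v$. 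On $\mathrm{supp}\,\pi_{\varepsilon,j}$ the cut-off $X_{x_{\varepsilon,j}}$ equals $1$ and $r_{x_{\varepsilon,j},\varepsilon}$ is the identity, so $\pi_{\varepsilon,j}(b-b_j)=\pi_{\varepsilon,j}\,b_{x_{\varepsilon,j},\varepsilon}$; invoking the elementary product bound $\|fh\|_{h^{\alpha}}\le\|f\|_{C(\Tone)}\|h\|_{h^{\alpha}}+[f]_{\alpha,\Tone}\|h\|_{C(\Tone)}+\|f\|_{C(\Tone)}\|h\|_{C(\Tone)}$ together with \eqref{Eqn:hAlphaNormBound} — whose decisive feature is the extra factor $\varepsilon^{\alpha}$ in the $C(\Tone)$-bound for $b_{z,\varepsilon}$, which cancels the $\varepsilon^{-\alpha}$ produced by the Hölder seminorms of $\pi_{\varepsilon,j}$ — and Lemma~\ref{Lem:PerturbedBhAlpha}(b), I expect the frozen-coefficient part of $S_\lambda$ to have $\mathcal{L}(h^{\alpha}(\Tone))$-norm $\le\tfrac14$ once $\varepsilon$ is small, \emph{uniformly} in $\re\,\lambda\ge\omega$. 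The commutators $[D^{2m},\pi_{\varepsilon,j}]$ are differential operators of order $\le 2m-1$ with $h^{\alpha}(\Tone)$-coefficients whose norms depend on $\varepsilon$ but not on $\lambda$; using the interpolation inequality $\|v\|_{h^{2m-1+\alpha}}\le\eta\|v\|_{h^{2m+\alpha}}+C_\eta\|v\|_{h^{\alpha}}$ from Proposition~\ref{Thm:hAlphaCAlphaClosure}(b) together with $\|v\|_{h^{2m+\alpha}}\le M_1\|\pi_{\varepsilon,j}g\|_{h^{\alpha}}$ and $\|v\|_{h^{\alpha}}\le (M_2/|\lambda|)\|\pi_{\varepsilon,j}g\|_{h^{\alpha}}$, one makes the commutator part of $S_\lambda$ also $\le\tfrac14$ by first choosing $\eta$ small and then — with $\varepsilon$ now fixed — enlarging $\omega$ to some $\omega'$. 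The passage from these local estimates to a bound for $S_\lambda$ uses only that the balls $\mathbb{B}_{\Tone}(x_{\varepsilon,j},\varepsilon)$ have overlap bounded independently of $\varepsilon$ (a one-dimensional fact about $(\Tone,\dT)$) and that each summand vanishes near the boundary of its supporting ball, so the assembly constant is $\varepsilon$-free. Thus $\|S_\lambda\|_{\mathcal{L}(h^{\alpha}(\Tone))}\le\tfrac12$ for $\re\,\lambda\ge\omega'$, and $R_\lambda(I+S_\lambda)^{-1}$ is a right inverse of $\lambda+\mathcal{A}_p$ by Neumann series.

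A symmetric computation gives $R_\lambda(\lambda+\mathcal{A}_p)=I+\widetilde S_\lambda$ on $h^{2m+\alpha}(\Tone)$ with $\|\widetilde S_\lambda\|_{\mathcal{L}(h^{2m+\alpha}(\Tone))}\le\tfrac12$, after shrinking $\varepsilon$ and enlarging $\omega'$ if necessary, so $\lambda+\mathcal{A}_p$ is also left invertible; hence $(\lambda+\mathcal{A}_p)\in\mathcal{L}_{isom}\big(h^{2m+\alpha}(\Tone),h^{\alpha}(\Tone)\big)$ for $\re\,\lambda\ge\omega'$, with $(\lambda+\mathcal{A}_p)^{-1}=R_\lambda(I+S_\lambda)^{-1}$. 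From the $\lambda$-uniform bounds on $R_\lambda$, $S_\lambda$ (and the $\mathcal{O}(1/|\lambda|)$ bound for the $h^{\alpha}$-to-$h^{\alpha}$ norm of its pieces, which comes from the $M_2$-estimate) one reads off $M_1',M_2'<\infty$ with $\|(\lambda+\mathcal{A}_p)^{-1}\|_{\mathcal{L}(h^{\alpha},h^{2m+\alpha})}\le M_1'$ and $|\lambda|\,\|(\lambda+\mathcal{A}_p)^{-1}\|_{\mathcal{L}(h^{\alpha})}\le M_2'$ for $\re\,\lambda\ge\omega'$; combined with the trivial estimate $\|(\lambda+\mathcal{A}_p)f\|_{h^{\alpha}}\le(c\,|\lambda|+\|b\|_{C(\Tone)})\|f\|_{h^{2m+\alpha}}$, these give the two-sided inequality defining $\mathcal{H}\big(h^{2m+\alpha}(\Tone),h^{\alpha}(\Tone),\kappa,\omega'\big)$ for $\kappa$ large, which finishes the argument.

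The hard part is precisely this \emph{uniform-in-$\varepsilon$} control of $S_\lambda$ and $\widetilde S_\lambda$: one must keep the two small parameters in the right order — $\varepsilon$ absorbing the genuinely $2m$-th order frozen-coefficient perturbation, and $1/|\lambda|$ (with the interpolation parameter) absorbing the lower-order commutators — because the constants contributed by the cut-offs $\pi_{\varepsilon,j}$ and by the covering geometry degenerate as $\varepsilon\to0$. This is exactly where the intrinsic, translation-invariant geometry of $(\Tone,\dT)$ developed in Section~\ref{Section:PeriodicFunctions} (bounded overlap of $\varepsilon$-balls, invariance of $\dT$ under the $T_z$) and the quantitative smallness of the localized coefficients from Lemma~\ref{Lem:PerturbedBhAlpha}, with the crucial $\varepsilon^{\alpha}$-gain in \eqref{Eqn:hAlphaNormBound}, enter; everything else is bookkeeping with the Banach-algebra and interpolation estimates in the little-Hölder scale.
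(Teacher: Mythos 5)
Your overall strategy—freeze the coefficient at the sample points $x_{\varepsilon,j}$, use Theorem~\ref{Thm:GenerationWithConstant} for uniform resolvent bounds, build a parametrix with the resolution of unity, and close via a Neumann series with the principal error absorbed by $\varepsilon$-smallness and the lower-order commutators by large $\re\,\lambda$—is the same family of argument as the paper's. But your specific parametrix $R_\lambda=\sum_j\pi_j(\lambda+\mathcal{A}_{b_j})^{-1}\pi_j$ has a genuine gap in the smallness claim for the principal error. The term you must control is $\pi_j(b-b_j)D^{2m}v_j$ with $v_j=(\lambda+\mathcal{A}_{b_j})^{-1}\pi_j g$, and the only available bound for the highest-order factor is $\|D^{2m}v_j\|_{h^{\alpha}}\le\|v_j\|_{h^{2m+\alpha}}\le M_1\|\pi_j g\|_{h^{\alpha}}$. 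The cancellation you invoke ($\varepsilon^{\alpha}$ from $\|b_{z,\varepsilon}\|_{C(\Tone)}$ against $\varepsilon^{-\alpha}$ from $[\pi_j]_{\alpha,\Tone}$) takes care of the prefactor $\|\pi_j(b-b_j)\|_{h^{\alpha}}\lesssim C(\varepsilon):=\sup_z[\,b\,]_{\alpha,\overline{\mathbb{B}}(z,\varepsilon)}$, but it does nothing about the \emph{second} $\varepsilon$-scale cut-off sitting inside $\|\pi_j g\|_{h^{\alpha}}$, which can be as large as $[\pi_j]_{\alpha,\Tone}\|g\|_{C(\Tone)}\sim\varepsilon^{-\alpha}\|g\|_{C(\Tone)}$. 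The net per-term bound is therefore of size $C(\varepsilon)\,\varepsilon^{-\alpha}M_1\|g\|_{h^{\alpha}}$, and for a general $b\in h^{\alpha}(\Tone)$ the rate $C(\varepsilon)\to0$ is arbitrarily slow, so $C(\varepsilon)\varepsilon^{-\alpha}$ need not be small; your claim that the frozen-coefficient part of $S_\lambda$ is $\le\tfrac14$ for small $\varepsilon$, \emph{uniformly in} $\re\,\lambda\ge\omega$, is not justified as written (the assembly/overlap issue you flag is not where the trouble is). A repair is possible but requires restructuring: either move both cut-offs to one side (e.g. $\sum_j\pi_j^2(\lambda+\mathcal{A}_{b_j})^{-1}$, so the resolvent is applied to $g$ itself and no $\varepsilon^{-\alpha}$ enters the $h^{2m+\alpha}$-bound), or split the error so that the piece carrying $[\pi_j]_{\alpha}$ only multiplies $\|D^{2m}v_j\|_{C(\Tone)}$, which can then be interpolated against the $M_2/|\lambda|$ estimate and absorbed by enlarging $\omega$—but then it is no longer a $\lambda$-uniform $\varepsilon$-smallness argument. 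The paper sidesteps this entirely: the frozen-coefficient error is never multiplied by the $\varepsilon$-scale cut-offs at the operator level; instead the \emph{globally defined} small perturbation $b_{x_j,\varepsilon_0}(\cdot)D^{2m}$ (small in $\mathcal{L}(h^{2m+\alpha},h^{\alpha})$ purely by Lemma~\ref{Lem:PerturbedBhAlpha}(b)) is absorbed into $\mathcal{A}_j=[b(x_j)+b_{j}(\cdot)]D^{2m}$ via the perturbation theorem \cite[Theorem 1.3.1(i)]{AM95}, and the $\pi_j$'s enter only through the lower-order commutators $B_j=[\pi_j,\mathcal{A}_p]$ inside the retraction--coretraction framework on $(h^{\sigma}(\Tone))^n$, whose ($\varepsilon_0$-dependent) norms are beaten by the $|\lambda|^{-1/2m}$ decay from interpolation.

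A second, smaller gap: you work with arbitrary $\alpha\in\mathbb{R}_+\setminus\mathbb{Z}$, but the smallness input you rely on, Lemma~\ref{Lem:PerturbedBhAlpha}, is stated (and is only true) for $\alpha\in(0,1)$; indeed for $\alpha>1$ the localized coefficient $b\circ r_{z,\varepsilon}$ is not even in $h^{\alpha}(\Tone)$ (the retraction is merely Lipschitz), and the analogous quantity $\pi_j(b-b(x_j))$ fails to be small in $h^{\alpha}$ because its derivative contains $\pi_j b'$. The paper handles this by first proving the case $\alpha\in(0,1)$ and then bootstrapping to higher $\alpha$ by differentiating the resolvent equation $(\lambda+\mathcal{A}_p)u=f$ and using the Banach-algebra structure; your proposal needs this reduction step (or an equivalent device) to cover the full range of $\alpha$ in the statement.
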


\begin{proof} 
Fix $\omega > 0$ and $b \in h^{\alpha}(\Tone).$ By assumption, there exist constants $c_1$ and $c_2$, with $c_2 \geq c_1 > 0$ such that $b(\Tone) \subset \{ z \in \mathbb{C}: \re \, z \geq c_1 \} \cap \{ z \in \mathbb{C}: |z| \leq c_2 \}$.

(i) First we demonstrate that it suffices to prove the result for $\alpha \in (0,1)$. Suppose that the claim holds for $\alpha \in (0,1)$ and let $\beta := \alpha + 1$. In particular, we assume $b \in h^{\beta}(\Tone)$ and $\mathcal{A}_p = b(\cdot)D^{2m} \in \mathcal{H}(h^{2m + \alpha}(\Tone),h^{\alpha}(\Tone))$. It follows that $(\lambda + \mathcal{A}_p): h^{2m + \alpha}(\Tone) \rightarrow h^{\alpha}(\Tone)$ is invertible for $\re \, \lambda \geq \omega$ and we have the resolvent estimates 
\begin{equation}\label{Eqn:fResolvent}
\|(\lambda + \mathcal{A}_p)^{-1} \|_{\mathcal{L}(h^{\alpha},h^{2m + \alpha})} \leq \kappa,  \qquad
|\lambda| \|(\lambda + \mathcal{A}_p)^{-1} \|_{\mathcal{L}(h^{\alpha})} \leq \kappa,
\end{equation}
for $\re \, \lambda \geq \omega$, for some $\omega > 0$ and some $\kappa \geq 1$.
Now, fix $\lambda \in \mathbb{C}$ with $\re \, \lambda \geq \omega$ and consider $f \in h^{\beta}(\Tone)$. Then $f \in h^{\alpha}(\Tone),$ by Proposition~\ref{Thm:hAlphaCAlphaClosure}(a), and we define $u := (\lambda + \mathcal{A}_p)^{-1} f \in h^{2m + \alpha}(\Tone)$. Then $u$ satisfies the equation $(\lambda + \mathcal{A}_p)u = f$ and, differentiating this equation, we see that 
\begin{equation*}
(\lambda + \mathcal{A}_p)u' = f' - b' \, u^{(2m)},
\end{equation*}
where, a priori, we know that $u' \in h^{2m - 1 + \alpha}(\Tone)$. However, notice that\\ $f', b', u^{(2m)} \in h^{\alpha}(\Tone)$ and $b' \, u^{(2m)} \in h^{\alpha}(\Tone)$, since $h^{\alpha}(\Tone)$ is a Banach algebra, so that $u' = (\lambda + \mathcal{A}_p)^{-1} \big( f' - b' \, u^{(2m)} \big) \in h^{2m + \alpha}(\Tone)$. Hence, we see that $u \in h^{2m + \beta}(\Tone)$ and 
\begin{align*}
\| u \|_{h^{2m + \beta}(\Tone)} &= \sum_{k=0}^{2m + 1} \| u^{(k)} \|_{C(\Tone)} + [ u^{(2m + 1)} \,]_{\alpha, \Tone} = \| u' \|_{h^{2m + \alpha}(\Tone)} + \| u \|_{C(\Tone)}\\
&\leq \kappa \big( \|f' - b' \, u^{(2m)}\|_{h^{\alpha}(\Tone)} + \| u \|_{C(\Tone)} \big)\\
&\leq \kappa \big( \| f' \|_{h^{\alpha}(\Tone)} + (1 \vee \| b' \|_{h^{\alpha}(\Tone)}) \| u \|_{h^{2m + \alpha}(\Tone)} \big)\\
&\leq \kappa \big( \| f' \|_{h^{\alpha}(\Tone)} + \kappa (1 \vee \| b' \|_{h^{\alpha}(\Tone)}) \| f \|_{h^{\alpha}(\Tone)} \big)\\
&\leq K(\kappa, b) \big( \| f' \|_{C(\Tone)} + [ f' \, ]_{\alpha, \Tone} + \| f \|_{C(\Tone)} + [ f \, ]_{\alpha, \Tone} \big)\\
&\leq K(\kappa, b) \big( (1 + \pi^{1 - \alpha}) \| f' \|_{C(\Tone)} + \| f \|_{C(\Tone)} + [ f' \, ]_{\alpha, \Tone} \big) \leq \tilde{K} \| f \|_{h^{\beta}(\Tone)}.
\end{align*}
So that $\|(\lambda + \mathcal{A}_p)^{-1} \|_{\mathcal{L}(h^{\beta},h^{2m + \beta})} \leq \tilde{K}$ for $\re \, \lambda \geq \omega$. Meanwhile, in a similar fashion, we see that
\begin{align*}
|\lambda| \| u \|_{h^{\beta}(\Tone)} = |\lambda| \big( \| u \|_{C(\Tone)} + \| u' \|_{C(\Tone)} + [ u' \, ]_{\alpha, \Tone} \big) \leq \tilde{K} \| f \|_{h^{\beta}(\Tone)},
\end{align*}
holds for $\re \, \lambda \geq \omega$. Hence, it follows that $| \lambda | \|(\lambda + \mathcal{A}_p)^{-1} \|_{\mathcal{L}(h^{\beta})} \leq \tilde{K}$ for $\re \, \lambda \geq \omega$ and so the claim holds for $\beta = \alpha + 1$. Then, we extend the result to any $\beta > 1$, $\beta \notin \mathbb{Z}$, by induction on $\alpha$.

(ii) Now, we demonstrate the claim for $\alpha \in (0,1)$. By uniform ellipcticity of $\mathcal{A}_p$, it follows from Theorem~\ref{Thm:GenerationWithConstant} that there exists some constant $\kappa = \kappa(\omega, c_1, c_2) \geq 1$ such that $\mathcal{A}_p(x_0) := b(x_0) D^{2m} \in \mathcal{H}(h^{2m + \alpha}(\Tone), h^{\alpha}(\Tone), \kappa, \omega)$ for any fixed $x_0 \in \Tone.$ Now, fix $\eta$ so that $0 < \eta < 1 / \kappa$. By Lemma~\ref{Lem:PerturbedBhAlpha}(b), there exists $\varepsilon_0 > 0$ with associated sampling set $\{x_j\} := \{ x_{\varepsilon_0,j} \}$ and partition $\Omega := \Omega_{\varepsilon_0} = \{ \mathbb{B}_{\Tone}(x_j, \varepsilon_0) \}$, $j = 1, \ldots, n := n(\varepsilon_0)$, such that 
\begin{equation}\label{Eqn:PerturbedBBound}
\sup_{j = 1, \ldots, n} \| b_{j} \|_{h^{\alpha}} < \eta, \qquad \text{where} \quad b_{j} := b_{x_j,\varepsilon_0} \,.
\end{equation}
Moreover, by Lemma~\ref{Lem:PerturbedBhAlpha}(a) and the fact that $h^{\alpha}(\Tone)$ is a Banach Algebra, the operator $b_{j}(\cdot) D^{2m}$ is in $\mathcal{L}(h^{2m + \alpha}(\Tone), h^{\alpha}(\Tone))$ with $\| b_{j}(\cdot) D^{2m} \|_{\mathcal{L}(h^{2m + \alpha}, h^{\alpha})} \leq \| b_{j} \|_{h^{\alpha}}$, for $j = 1, \ldots, n$. Hence, by \cite[Theorem 1.3.1(i)]{AM95} and \eqref{Eqn:PerturbedBBound} we can see that perturbations of $\mathcal{A}_p(x_j)$ remain in the class $\mathcal{H}$, namely
\begin{equation*}\label{Eqn:PerturbedOperatorType}
\mathcal{A}_j := \left[ b(x_j) + b_{j}(\cdot) \right] D^{2m} \in \mathcal{H}\left( h^{2m + \alpha}(\Tone), h^{\alpha}(\Tone),\frac{\kappa}{1 - \kappa \eta}, \omega \right), \qquad \! j = 1, \ldots, n.
\end{equation*}
In particular, this implies that $\{ \lambda \in \mathbb{C}: \re \, \lambda \geq \omega \} \subset \rho(- \mathcal{A}_j )$ and the resolvent estimates 
\begin{equation}\label{Eqn:AjResolventEstimates}
\begin{split}
|\lambda| \| (\lambda + \mathcal{A}_j)^{-1} \|_{\mathcal{L}(h^{\alpha})} \leq \frac{\kappa}{1 - \kappa \eta},\\
\| (\lambda + \mathcal{A}_j)^{-1} \|_{\mathcal{L}(h^{\alpha},h^{2m + \alpha})} \leq \frac{\kappa}{1 - \kappa \eta},
\end{split}
\end{equation} 
hold uniformly for $\re \, \lambda \geq \omega$ and $j = 1, \ldots, n$.

Let $\Pi := \Pi_{\varepsilon_0} = \{ \pi^2_j \}$ be a resolution of unity subordinate to $\Omega$, where 
we also insist that $\| \pi_j \|_{h^{\alpha}}, \, \| \pi_j \|_{h^{2m + \alpha}} \leq M$ uniformly in $j$, for some $M = M (\varepsilon_0) \geq 1$. Now, define the composite little-H$\ddo$lder spaces
\begin{equation*}\label{Eqn:hAlphanDefined}
\left( h^{\sigma}(\Tone) \right)^n := \left\{ ( f_j )_{j \in \mathbb{N}} \in \ell^{\infty} \left( h^{\sigma}(\Tone) \right) : f_j = 0 \text{ for } j > n \right\}, \qquad \sigma \in \mathbb{R}_+ \setminus \mathbb{Z}. 
\end{equation*}
Then, it is easy to see that $\left( h^{\sigma}(\Tone) \right)^n$ is a Banach space, with the norm topology inherited from $\ell^{\infty} \left( h^{\sigma}(\Tone) \right)$. Moreover, we have the following retraction and coretraction
\begin{align}
R:& \, (h^{\sigma}(\Tone))^n \rightarrow h^{\sigma}(\Tone) \qquad \text{where } R\left( (f_j)_j \right) := \sum_{j = 1}^n \pi_j f_j\\
R^C&: h^{\sigma}(\Tone) \rightarrow (h^{\sigma}(\Tone))^n \qquad \text{where } R^C \left( u \right) := \left( \pi_j u \right)_j.
\end{align}
With finiteness of the partition $\Omega$ and the properties of the resolution of unity $\Pi$, we easily see that $R \in \mathcal{L}\left( (h^{\sigma}(\Tone))^n, h^{\sigma}(\Tone) \right)$ and $R^C \in \mathcal{L} \left( h^{\sigma}(\Tone), (h^{\sigma}(\Tone))^n \right)$ with
$R \circ R^C  = id_{h^{\sigma}(\Tone)}$ and 
\begin{equation}\label{Eqn:RetractNormBounds}
\| R \|_{\mathcal{L}\left( (h^{\sigma}(\Tone))^n, h^{\sigma}(\Tone) \right)} \leq n M, \qquad \| R^C \|_{\mathcal{L} \left( h^{\sigma}(\Tone), (h^{\sigma}(\Tone))^n \right)} \leq M, \qquad \sigma \in \{ \alpha, 2m + \alpha \}.
\end{equation}

We will make use of $R$ and $R^C$ together with the spaces $\left( h^{\sigma} (\Tone) \right)^n$ to construct a left and right inverse for $(\lambda + \mathcal{A}_p)$, for $\re \, \lambda \geq \omega_0 \geq \omega$ sufficiently large. Toward this goal, we define the following operators:
\begin{itemize}
	
	\item $\displaystyle \Lambda : \left( h^{2m + \alpha}(\Tone) \right)^n \rightarrow \hAlphaN$ defined by $\Lambda (f_j)_j := ( \mathcal{A}_j f_j)_j.$ Then $\Lambda \in \mathcal{L} \left( \left( h^{2m + \alpha}(\Tone) \right)^n, \hAlphaN \right)$ with $\displaystyle \| \Lambda \| \leq \sup_{j = 1, \ldots, n} \| \mathcal{A}_j \|_{\mathcal{L} \left(h^{2m + \alpha},h^{\alpha} \right)}$.

	\item $\displaystyle B_j := \pi_j \mathcal{A}_j - \mathcal{A}_j \pi_j = [\pi_j, \mathcal{A}_p]$ the commutator of $\pi_j$ and $\mathcal{A}_p$, $j = 1, \ldots, n$. The second expression for $B_j$ follows from the fact that\\ supp$(\pi_j) \subset \mathbb{B}_{\Tone}(x_j, \varepsilon_0)$ and $b_{j}(x) = b(x) - b(x_j)$ for $x \in \mathbb{B}_{\Tone}(x_j, \varepsilon_0)$, so that $\mathcal{A}_p$ and $\mathcal{A}_j$ coincide on supp$(\pi_j)$. Moreover, the highest order terms are eliminated in $B_j$ so that we have $B_j \in \mathcal{L} \left( h^{(2m-1) + \alpha}(\Tone), h^{\alpha}(\Tone) \right)$ with $\| B_j \| \leq C(m) \| \pi_j \|_{h^{2m + \alpha}} \leq C(m) M$.
	
	\item $\displaystyle \mathcal{B} : h^{(2m-1) + \alpha}(\Tone) \rightarrow \hAlphaN$ defined by $\mathcal{B}f := \left( B_j f \right)_j$. Then\\ $\mathcal{B} \in \mathcal{L} \left( h^{(2m-1) + \alpha}(\Tone), \hAlphaN \right)$ with\\ $\displaystyle \| \mathcal{B} \| \leq \sup_{j = 1, \ldots, n} \| B_j \|_{\mathcal{L} \left( h^{(2m-1) + \alpha}, h^{\alpha} \right)} \leq C(m) M$.

	\item $\displaystyle \mathcal{D} : (h^{(2m-1) + \alpha}(\Tone))^n \rightarrow h^{\alpha}(\Tone)$ defined by $\displaystyle \mathcal{D}(f_j)_j := \sum_{j=1}^n B_j f_j$. Then\\ 
	$\mathcal{D} \in \mathcal{L}((h^{(2m-1) + \alpha}(\Tone))^n, h^{\alpha}(\Tone))$  with $\| \mathcal{D} \|
	\leq n\,C(m)M$.
	
	\item $\displaystyle C_{j,k}(\lambda):= B_j \circ \pi_k \circ \left( \lambda + \mathcal{A}_k \right)^{-1}$, $j,k = 1, \ldots, n$, $\re \, \lambda \geq \omega$. We easily see that $C_{j,k} \in \mathcal{L}(h^{\alpha}(\Tone))$. Moreover, since $B_j$ maps $h^{(2m-1) + \alpha}(\Tone)$ to $h^{\alpha}(\Tone)$ we can consider the mapping $(\lambda + \mathcal{A}_k)^{-1}$ from $h^{\alpha}(\Tone)$ to $h^{(2m-1) + \alpha}(\Tone)$. In this way, we take advantage of the interpolation result for little-H$\ddo$lder spaces, Proposition~\ref{Thm:hAlphaCAlphaClosure}(b), in conjunction with the resolvent estimates \eqref{Eqn:AjResolventEstimates} on $\mathcal{A}_k$, to see that 
\begin{align}\label{Eqn:BoundIntoh3Alpha}
\nonumber \| (\lambda + \mathcal{A}_k)^{-1} \|&_{\mathcal{L}(h^{\alpha},h^{(2m-1) + \alpha})} \leq \| (\lambda + \mathcal{A}_k)^{-1} \|_{\mathcal{L}(h^{\alpha},h^{2m + \alpha})}^{1 - 1/2m} \| (\lambda + \mathcal{A}_k)^{-1} \|_{\mathcal{L}(h^{\alpha})}^{1/2m}\\ 
&\nonumber \leq \left( \frac{\kappa}{1 - \kappa \eta} \right)^{1 - 1/2m} \left( \frac{\kappa}{1 - \kappa \eta} \right)^{1/2m} | \lambda |^{-1/2m}\\
&\leq \left( \frac{\kappa}{1 - \kappa \eta} \right) | \lambda |^{-1/2m} = \tilde{c} | \lambda |^{-1/2m}.
\end{align} 
Here, we take advantage of the fact that the continuous interpolation method used in Propostion~\ref{Thm:hAlphaCAlphaClosure}(b) is exact. Hence, the $C_{j,k}$ operator norms are bounded as
\begin{equation}\label{Eqn:CjkBound}
\| C_{j,k} \|_{\mathcal{L}(h^{\alpha}(\Tone))} \leq \tilde{c} \, C(m) M^2 | \lambda |^{-1/2m} \qquad j,k = 1, \ldots, n, \; \re \, \lambda \geq \omega.
\end{equation}

	\item $\displaystyle \mathcal{C}(\lambda): \hAlphaN \rightarrow \hAlphaN$ defined $\displaystyle \mathcal{C}(f_j)_j := \left( B_j \sum_{k = 1}^n \pi_k (\lambda + \mathcal{A}_k)^{-1} f_k \right)_{j}$, for $\re \, \lambda \geq \omega$. Notice that $\text{supp} \, \pi_k \subset \mathbb{B}_{\Tone}(x_k, \varepsilon_0)$ and $\text{supp} \, B_j \subset \mathbb{B}_{\Tone}(x_j, \varepsilon_0)$ for $j,k = 1, \ldots, n$, so $C_{j,k}(\lambda) = 0$ for $1 < |j - k| < n-1$. Hence, by \eqref{Eqn:CjkBound}, we can choose $\omega_1 > 0$ large enough to ensure that $\| \mathcal{C}(\lambda) \|_{\mathcal{L}(\hAlphaN)} \leq 1/2$ for $\re \, \lambda \geq \omega_1$.
\end{itemize}

\noindent {\bf Claim 1:} \emph{ For $\re \, \lambda \geq \omega_1$, $\left( \lambda + \Lambda + \mathcal{B}R \right): \left( h^{2m + \alpha}(\Tone) \right)^n \rightarrow \left( h^{\alpha}(\Tone) \right)^n$ is invertible and $L(\lambda) := R \left( \lambda + \Lambda + \mathcal{B}R \right)^{-1} R^C$ is a left inverse for $(\lambda + \mathcal{A}_p)$.}

From the definition and discussion of $\mathcal{C}(\lambda)$ above, we can choose $\omega_1 \geq \omega$ large enough so that $\| \mathcal{C}(\lambda) \|_{\mathcal{L}(\hAlphaN)} \leq 1/2$ for $\re \, \lambda \geq \omega_1$. Hence, by the Neumann series, we see that $\left( id_{\hAlphaN} + \mathcal{C}(\lambda) \right)$ is invertible on $\left( h^{\alpha}(\Tone) \right)^n$ for $\re \, \lambda \geq \omega_1$ and $\| ( id_{\hAlphaN} + \mathcal{C}(\lambda))^{-1} \|_{\mathcal{L}(\hAlphaN)} \leq 2$. Now, for any $(f_j)_j \in \left( h^{2m + \alpha}(\Tone) \right)^n$, $\re \, \lambda \geq \omega_1$, we have
\begin{align*}
	\mathcal{B}R (f_j)_j &= \mathcal{B} \Big( \sum_{k = 1}^n \pi_k f_k \Big) = \Big( B_j \sum_{k = 1}^n \pi_k f_k \Big)_j\\
	 &= \bigg( B_j \sum_{k = 1}^n \pi_k (\lambda + \mathcal{A}_k)^{-1} (\lambda + \mathcal{A}_k) f_k \bigg)_j = \mathcal{C}(\lambda) \Big( (\lambda + \mathcal{A}_j) f_j \Big)_j \\
	\Longrightarrow (\lambda + \Lambda +& \mathcal{B}R)(f_j)_j = (\lambda + \Lambda) (f_j)_j + \mathcal{C}(\lambda) \Big( (\lambda + \mathcal{A}_j) f_j \Big)_j\\
	 & \hspace{4em}= (id_{(h^{\alpha}(\Tone))^n} + \mathcal{C}(\lambda))(\lambda + \Lambda) (f_j)_j .
\end{align*}
Hence, invertibility of $(\lambda + \Lambda + \mathcal{B}R)$ follows from invertibility of $(id_{(h^{\alpha}(\Tone))^n} + \mathcal{C}(\lambda))$ and invertibility of $(\lambda + \Lambda)$, both of which hold if $\re \, \lambda \geq \omega_1 \geq \omega$.
Furthermore, we see that $(\lambda + \Lambda + \mathcal{B}R)^{-1} = (\lambda + \Lambda)^{-1}(id_{(h^{\alpha}(\Tone))^n} + \mathcal{C}(\lambda))^{-1}$, $\re \, \lambda \geq \omega_1$. 

Now, we apply \eqref{Eqn:AjResolventEstimates} to see that 
\[
\| ( \lambda + \Lambda )^{-1} \|_{\mathcal{L}( \hAlphaN )} \leq \left( \frac{\kappa}{1 - \kappa \eta} \right) | \lambda |^{-1},
\]
and so, with \eqref{Eqn:RetractNormBounds}, we get the bound
\begin{equation}\label{Eqn:FullResolventEstimate}
\| L(\lambda) \|_{\mathcal{L}(h^{\alpha})} = \| R(\lambda + \Lambda + \mathcal{B}R)^{-1}R^C \| \leq \left( \frac{2 \kappa }{1 - \kappa \eta} \right)n M^2 | \lambda |^{-1}.
\end{equation}

Finally, to see that $L(\lambda)$ is indeed a left inverse for $(\lambda + \mathcal{A}_p)$, when $\re \, \lambda \geq \omega_1$. Let $u \in h^{2m + \alpha}(\Tone)$ and $\pi_j \in \Pi$, then we see that
\[
\pi_j(\lambda + \mathcal{A}_p)u = (\lambda + \mathcal{A}_j) \pi_j u + B_j u = (\lambda + \mathcal{A}_j) \pi_j u + B_j R R^C u.
\]
Hence, it follows, by exhibiting all components for $j = 1, \ldots, n$, that
\[
R^C (\lambda + \mathcal{A}_p) u = (\lambda + \Lambda + \mathcal{B}R) R^C u \quad \text{and so} \quad L(\lambda)(\lambda + \mathcal{A}_p) = id_{h^{2m + \alpha}(\Tone)}.
\]

\noindent{\bf Claim 2:} \emph{$\left( \lambda + \Lambda - R^C \mathcal{D} \right): \left( h^{2m + \alpha}(\Tone) \right)^n \rightarrow \left( h^{\alpha}(\Tone) \right)^n$ is invertible for $\re \, \lambda \geq \omega_2 \geq \omega$ with $\omega_2$ sufficiently large. Moreover, $R(\lambda) := R \left( \lambda + \Lambda - R^C \mathcal{D} \right)^{-1} R^C$ is a right inverse for $(\lambda + \mathcal{A}_p)$ and $L(\lambda) = R(\lambda) = (\lambda + \mathcal{A}_p)^{-1}$.}

We make use of the same observations that led to the invertibility of $(\lambda + \Lambda + \mathcal{B} R)$ in the previous claim. Notice that
\[
(\lambda + \Lambda - R^C \mathcal{D}) = (id_{\hAlphaN} - R^C \mathcal{D} (\lambda + \Lambda)^{-1})(\lambda + \Lambda), \quad \re \, \lambda \geq \omega,
\]
so it suffices to show that $(id_{\hAlphaN} + R^C \mathcal{D} (\lambda + \Lambda)^{-1})$ is invertible in $\mathcal{L}(\hAlphaN)$, for $\re \, \lambda$ sufficiently large. However, this follows by the Neumann series, taking into account the fact that $\mathcal{D} \in \mathcal{L}((h^{(2m-1) + \alpha}(\Tone))^n, h^{\alpha}(\Tone))$, so that \eqref{Eqn:BoundIntoh3Alpha} implies
\[
\| R^C \mathcal{D} (\lambda + \Lambda)^{-1} \|_{\mathcal{L}(\hAlphaN)} \leq \tilde{c} \, n C(m) M^2 | \lambda |^{-1/2m}.
\]
Hence, we can choose $\omega_2 \geq \omega$ large enough that $\| R^C \mathcal{D} (\lambda + \Lambda)^{-1} \|_{\mathcal{L}(\hAlphaN)} \leq 1/2$ for $\re \, \lambda \geq \omega_2$, which implies invertibility of $(\lambda + \Lambda - R^C \mathcal{D})$. Furthermore, to see that $R(\lambda)$ is a right inverse for $(\lambda + \mathcal{A}_p)$, 
let $(f_j)_j \in (h^{2m + \alpha}(\Tone))^n$ and notice that
\begin{align*}
(\lambda + \mathcal{A}_p)R(f_j)_j &= 
\sum_{j = 1}^n (\lambda + \mathcal{A}_p) \pi_j f_j = \sum_{j = 1}^n \left( \pi_j (\lambda + \mathcal{A}_j) f_j - B_j f_j \right)\\
&= \sum_{j = 1}^n \pi_j (\lambda + \mathcal{A}_j) f_j - \sum_{j = 1}^n B_j f_j = R (\lambda + \Lambda) (f_j)_j - \mathcal{D}(f_j)_j\\ 
&= R (\lambda + \Lambda) (f_j)_j - R R^C \mathcal{D}(f_j)_j = R( \lambda + \Lambda - R^C \mathcal{D}) (f_j)_j \,.
\end{align*}
Hence, $(\lambda + \mathcal{A}_p) R(\lambda) = id_{h^{\alpha}(\Tone)}$ and $R(\lambda)$ is a right inverse for $(\lambda + \mathcal{A}_p)$.

Finally, let $\omega_0 = \omega_1 \vee \omega_2$, so that $L(\lambda)$ and $R(\lambda)$ are both defined for $\re \, \lambda \geq \omega_0$. Then $L(\lambda) f = L(\lambda) \big[ (\lambda + \mathcal{A}_p) R(\lambda) \big] f = \big[ L(\lambda) (\lambda + \mathcal{A}_p) \big] R(\lambda) f = R(\lambda) f,$ for $f \in h^{\alpha}(\Tone)$. Hence, $(\lambda + \mathcal{A}_p)$ is invertible for $\re \, \lambda \geq \omega_0$ and $\mathcal{A}_p \in \mathcal{H}(h^{2m + \alpha}(\Tone), h^{\alpha}(\Tone))$ follows from the resolvent estimate \eqref{Eqn:FullResolventEstimate}.

\end{proof}

With this generation result for the principal operator $\mathcal{A}_p$ established, we return to the full elliptic operator $\mathcal{A}$. Making use of perturbation results for generators of analytic semigroups, we prove that -$\mathcal{A}$ generates an analytic semigroup in the scale of little-H$\ddo$lder spaces. 

\begin{thm}\label{FullResult}
Let $m \in \mathbb{N}, \; \alpha \in \mathbb{R}_+ \setminus \mathbb{Z}, b_k \in h^{\alpha}(\Tone), \; \text{for} \; k = 0, \ldots, 2m$, and suppose the operator $\mathcal{A} := \mathcal{A}(\cdot,D) := \sum_{k = 0}^{2m} b_k(\cdot) D^k$ is uniformly elliptic. Then
\[
\mathcal{A} \in \mathcal{H} ( h^{2m + \alpha}(\Tone), h^{\alpha}(\Tone)).
\]

\end{thm}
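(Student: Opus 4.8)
The approach is to regard $\mathcal{A}$ as a lower-order perturbation of its principal part, for which Lemma~\ref{MainResult} already supplies the generation property, and to absorb that perturbation through a Neumann series once $\re\,\lambda$ is large. Write $\mathcal{A} = \mathcal{A}_p + \mathcal{A}_\ell$ with $\mathcal{A}_p := b_{2m}(\cdot)D^{2m}$ and $\mathcal{A}_\ell := \sum_{k=0}^{2m-1} b_k(\cdot)D^k$. Since the principal symbol of $\mathcal{A}$ coincides with that of $\mathcal{A}_p$, uniform ellipticity of $\mathcal{A}$ (condition \eqref{Eqn:UniformEllipticity}) is exactly uniform ellipticity of $\mathcal{A}_p$, so Lemma~\ref{MainResult} yields $\omega > 0$ and $\kappa \ge 1$ with $\{\re\,\lambda \ge \omega\} \subset \rho(-\mathcal{A}_p)$ and
\[
\|(\lambda + \mathcal{A}_p)^{-1}\|_{\mathcal{L}(h^{\alpha}, h^{2m+\alpha})} \le \kappa, \qquad |\lambda|\,\|(\lambda + \mathcal{A}_p)^{-1}\|_{\mathcal{L}(h^{\alpha})} \le \kappa, \qquad \re\,\lambda \ge \omega.
\]
At the same time $\mathcal{A}_\ell$ is a genuine lower-order term: for $u \in h^{(2m-1)+\alpha}(\Tone)$ and $0 \le k \le 2m-1$ one has $u^{(k)} \in h^{(2m-1-k)+\alpha}(\Tone) \hookrightarrow h^{\alpha}(\Tone)$ by Proposition~\ref{Thm:hAlphaCAlphaClosure}(a), and since $h^{\alpha}(\Tone)$ is a Banach algebra, $b_k u^{(k)} \in h^{\alpha}(\Tone)$; summing over $k$ gives $\mathcal{A}_\ell \in \mathcal{L}(h^{(2m-1)+\alpha}(\Tone), h^{\alpha}(\Tone))$ with norm bounded by a constant $C_\ell$ depending only on $m$ and $\max_{0 \le k \le 2m}\|b_k\|_{h^{\alpha}}$.

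Next I would interpolate the two resolvent bounds for $\mathcal{A}_p$ using the exact continuous interpolation functor of Proposition~\ref{Thm:hAlphaCAlphaClosure}(b) at exponent $\theta = 1 - 1/2m$, so that $(h^{\alpha}(\Tone), h^{2m+\alpha}(\Tone))_{\theta} = h^{(2m-1)+\alpha}(\Tone)$ — legitimate since $\alpha \notin \mathbb{Z}$ forces $(2m-1)+\alpha \notin \mathbb{Z}$ — obtaining, exactly as in \eqref{Eqn:BoundIntoh3Alpha},
\[
\|(\lambda + \mathcal{A}_p)^{-1}\|_{\mathcal{L}(h^{\alpha}, h^{(2m-1)+\alpha})} \le \kappa\,|\lambda|^{-1/2m}, \qquad \re\,\lambda \ge \omega.
\]
Composing, $\mathcal{A}_\ell(\lambda + \mathcal{A}_p)^{-1} \in \mathcal{L}(h^{\alpha}(\Tone))$ with norm at most $C_\ell\,\kappa\,|\lambda|^{-1/2m}$, so one can choose $\omega_0 \ge \omega$ with $\|\mathcal{A}_\ell(\lambda + \mathcal{A}_p)^{-1}\|_{\mathcal{L}(h^{\alpha})} \le 1/2$ for $\re\,\lambda \ge \omega_0$. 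By the Neumann series $I + \mathcal{A}_\ell(\lambda + \mathcal{A}_p)^{-1}$ is then invertible in $\mathcal{L}(h^{\alpha}(\Tone))$ with inverse of norm at most $2$, and the factorization $\lambda + \mathcal{A} = \big(I + \mathcal{A}_\ell(\lambda + \mathcal{A}_p)^{-1}\big)(\lambda + \mathcal{A}_p)$ shows $(\lambda + \mathcal{A}) \in \mathcal{L}_{isom}(h^{2m+\alpha}(\Tone), h^{\alpha}(\Tone))$ for $\re\,\lambda \ge \omega_0$, with
\[
(\lambda + \mathcal{A})^{-1} = (\lambda + \mathcal{A}_p)^{-1}\big(I + \mathcal{A}_\ell(\lambda + \mathcal{A}_p)^{-1}\big)^{-1}.
\]
Reading off $\|(\lambda + \mathcal{A})^{-1}\|_{\mathcal{L}(h^{\alpha}, h^{2m+\alpha})} \le 2\kappa$ and $|\lambda|\,\|(\lambda + \mathcal{A})^{-1}\|_{\mathcal{L}(h^{\alpha})} \le 2\kappa$ for $\re\,\lambda \ge \omega_0$, and manipulating these exactly as at the end of the proof of Theorem~\ref{Thm:GenerationWithConstant} to recover the lower estimate, produces $\tilde\kappa$ with $\mathcal{A} \in \mathcal{H}(h^{2m+\alpha}(\Tone), h^{\alpha}(\Tone), \tilde\kappa, \omega_0)$, whence $\mathcal{A} \in \mathcal{H}(h^{2m+\alpha}(\Tone), h^{\alpha}(\Tone))$ by \cite[Theorem 1.2.2]{AM95}. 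Alternatively the whole perturbation step can be invoked directly from the lower-order perturbation result \cite[Theorem 1.3.1]{AM95}, the interpolation estimate above being precisely what verifies its hypotheses.

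I do not expect a genuine obstacle here: the statement is essentially a corollary of Lemma~\ref{MainResult}. The only points deserving care are checking that Proposition~\ref{Thm:hAlphaCAlphaClosure}(b) is applicable — the intermediate smoothness $(2m-1)+\alpha$ must be non-integral, which it is — and using that the continuous interpolation functor there is exact, which is what makes $(\lambda + \mathcal{A}_p)^{-1}$ pick up the sharp decay $|\lambda|^{-1/2m}$ needed to dominate $\mathcal{A}_\ell$ for $\re\,\lambda$ large; without exactness one would only get the weaker decay governed by the nonexact interpolation inequality, which is still enough here but slightly less clean.
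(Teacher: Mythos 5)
Your argument is correct, and it reaches the theorem by a slightly different mechanism than the paper. The paper also splits off the principal part $\mathcal{A}_p=b_{2m}D^{2m}$ (handled by Lemma~\ref{MainResult}), but then adds the lower-order terms \emph{one at a time}: for each $b_kD^{k}$, $k\le 2m-1$, it uses the interpolation inequality $\|f\|_{h^{2m-1+\alpha}}\le c\,\|f\|_{h^{\alpha}}^{1/2m}\|f\|_{h^{2m+\alpha}}^{(2m-1)/2m}$ together with Young's inequality to produce a relative bound $\|B_1f\|_{h^{\alpha}}\le \tilde\varepsilon\|f\|_{h^{2m+\alpha}}+C(\varepsilon)\|f\|_{h^{\alpha}}$ with $\tilde\varepsilon<1/\kappa$, and then invokes Amann's perturbation theorem \cite[Theorem 1.3.1(ii)]{AM95} to stay in the class $\mathcal{H}$, iterating over the remaining coefficients. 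You instead absorb the \emph{entire} lower-order part $\mathcal{A}_\ell$ at once, by extracting the decay $\|(\lambda+\mathcal{A}_p)^{-1}\|_{\mathcal{L}(h^{\alpha},h^{(2m-1)+\alpha})}\le \kappa|\lambda|^{-1/2m}$ from exact interpolation of the two resolvent bounds and running a Neumann series on the factorization $\lambda+\mathcal{A}=(I+\mathcal{A}_\ell(\lambda+\mathcal{A}_p)^{-1})(\lambda+\mathcal{A}_p)$ for $\re\,\lambda$ large; this is precisely the device the paper itself uses inside the proof of Lemma~\ref{MainResult} (the bound \eqref{Eqn:BoundIntoh3Alpha} and the $\mathcal{C}(\lambda)$, $R^C\mathcal{D}(\lambda+\Lambda)^{-1}$ estimates), just redeployed at the level of the full operator. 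The trade-off is minor: the paper's relative-bound route keeps the smallness uniform in $\lambda$ and delegates the resolvent bookkeeping to the cited perturbation theorem, while your route is self-contained but must enlarge the half-plane to $\re\,\lambda\ge\omega_0$; your closing remark that one could instead verify the hypotheses of \cite[Theorem 1.3.1]{AM95} is in fact exactly the paper's proof. All the points needing care — that uniform ellipticity of $\mathcal{A}$ is a condition on $b_{2m}$ alone, that $(2m-1)+\alpha\notin\mathbb{Z}$ so Proposition~\ref{Thm:hAlphaCAlphaClosure}(b) applies, that $h^{\alpha}(\Tone)$ is a Banach algebra so $\mathcal{A}_\ell\in\mathcal{L}(h^{(2m-1)+\alpha},h^{\alpha})$, and that the two resolvent bounds plus boundedness of the coefficients yield membership in some $\mathcal{H}(h^{2m+\alpha},h^{\alpha},\tilde\kappa,\omega_0)$ — are correctly handled.
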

\begin{proof}
By Lemma~\ref{MainResult} and \cite[Theorem I.1.2.2]{AM95} we can find $\omega > 0$ and $\kappa \geq 1$ such that $\mathcal{A}_p := \mathcal{A}_p(\cdot,D) := b_{\, 2m}(\cdot)D^{2m} \in \mathcal{H}(h^{2m + \alpha}(\Tone),h^{\alpha}(\Tone), \kappa, \omega)$. Now, fix $\eta$ so that $0 < \eta < 1/ \kappa$ and consider the operator $B_1 := B_1(\cdot,D) := b_{\, 2m - 1}(\cdot)D^{2m - 1}.$ For any $f \in h^{2m + a}(\Tone)$, we make use of the interpolation inequality, c.f. \cite[Proposition I.2.2.1]{AM95}, and Young's inequality to see that
\begin{align*}
\| B_1 \, f \|_{h^{\alpha}(\Tone)} &\leq \| b_{\, 2m - 1} \|_{h^{\alpha}(\Tone)} \; \| f^{2m - 1} \|_{h^{\alpha}(\Tone)} \leq \| b_{\, 2m - 1} \|_{h^{\alpha}(\Tone)} \; \| f \|_{h^{2m - 1 + \alpha}(\Tone)}\\
&\leq c \| b_{\, 2m - 1} \|_{h^{\alpha}(\Tone)} \; \big( \| f \|_{h^{\alpha}(\Tone)}^{\frac{1}{2m}} \; \| f \|_{h^{2m + \alpha}(\Tone)}^{\frac{2m - 1}{2m}} \big)\\
&\leq c \| b_{\, 2m - 1} \|_{h^{\alpha}(\Tone)} \; \big( \varepsilon \, \| f \|_{h^{2m + \alpha}(\Tone)} + \tilde{c} \, \varepsilon^{1 - 2m} \; \| f \|_{h^{\alpha}(\Tone)} \big),
\end{align*}
where the last inequality holds for arbitrary $\varepsilon > 0$. Now, if we choose $\varepsilon > 0$ such that $\tilde{\varepsilon} := c \varepsilon \| b_{\, 2m - 1} \|_{h^{\alpha}(\Tone)} < \eta$, it follows from \cite[Theorem 1.3.1(ii)]{AM95} that 
\[
\mathcal{A}_p + B_1 \in \mathcal{H} \left( h^{2m + \alpha}(\Tone), \; h^{\alpha}(\Tone), \; \frac{\kappa}{1 - \kappa \tilde{\varepsilon}}, \; \omega \; \vee \; \frac{ c \tilde{c} \, \varepsilon^{1 - 2m} \; \| b_{\, 2m - 1} \|_{h^{\alpha}(\Tone)}}{\tilde{\varepsilon}} \right).
\]
Now the theorem follows by repeating this argument for the remaining lower-order terms of the operator $\mathcal{A}$.
\end{proof}

\begin{rem}
Notice that the results of Theorem~\ref{FullResult} also hold in the setting of classic H$\ddo$lder spaces $C^{\sigma}(\Tone),$
though one must still take coefficients from the little-H$\ddo$lder spaces to preserve smallness of localized coefficients, c.f. Lemma~\ref{Lem:PerturbedBhAlpha}(b).
One notable difference when considering these analogous results in the classic H$\ddo$lder setting is that 
the semigroups generated are no longer strongly continuous, due to a lack of dense embeddings in this setting.
For the methods leading to maximal regularity that follow, strong continuity of semigroups is necessary, 
so the results in the little-H$\ddo$lder setting are required for our purposes.
\end{rem}
\section{Maximal Regularity and The Inhomogeneous Problem}

Now we return to the task of finding solutions to the inhomogeneous problem 
\begin{equation}\label{Eqn:CP2}
\begin{cases}
\partial_t u(t,x) + \mathcal{A}(x,D)u(t,x) = f(t,x), & \text{$t > 0, \, x \in \mathbb{R}$}\\
u(0,x) = u_0(x), & \text{$x \in \mathbb{R}$},
\end{cases}
\end{equation} 
Given an interval $J := [0, T]$ 
and $\dot{J} := J \setminus \{ 0 \},$ we say that $u:[t \mapsto u(t,\cdot)]$ is a \emph{classical solution} to \eqref{Eqn:CP2} if
\[
u \in C^1(\dot{J}, C(\Tone)) \cap C(\dot{J},C^{2m}(\Tone)) \cap C(J, C(\Tone)),
\]
and $u$ satisfies \eqref{Eqn:CP2}. Following results of DaPrato, Grisvard and Angenent, we will show how the analytic semigroup generation result of Theorem~\ref{FullResult} leads to existence and uniqueness of solutions to \eqref{Eqn:CP2}, with maximal regularity of solutions. We begin by defining function spaces which define the temporal regularity of solutions (i.e. mapping the interval $J$ into the little-H$\ddo$lder spaces), then we define a class of maximal regularity and use properties of maximal regularity to demonstrate existence and uniqueness of solutions.


\subsection{Function Spaces and Maximal Regularity}

Addressing temporal regularity of solutions, let $\mu \in (0,1], \, J := [0,T]$, for some $T > 0$, and let $E$ be a Banach space. Following the notation of \cite{CS01}, we define spaces of functions defined on $\dot{J} := J \setminus \{ 0 \}$ with prescribed singularity at 0. Namely, define
\begin{equation}\label{Eqn:SingularContinuity}
\begin{aligned}
&BUC_{1 - \mu}(J,E) := \bigg\{ u \in C(\dot{J}, E): [t \mapsto t^{1 - \mu} u(t)] \in BUC(\dot{J},E) \; \text{and} \\ 
& \hspace{9em} \lim_{t \rightarrow 0^+} t^{1 - \mu} \| u(t) \|_E = 0 \bigg\}, \quad \mu \in (0,1)\\
&\| u \|_{C_{1 - \mu}} := \sup_{t \in J} t^{1 - \mu} \| u(t) \|_E,
\end{aligned}
\end{equation}
where $BUC$ denotes the space consisting of bounded, uniformly continuous functions. It is easy to verify that $BUC_{1 - \mu}(J,E)$ is a Banach space when equipped with the norm $\| \cdot \|_{C_{1 - \mu}}$. Moreover, we define the subspace
\[
BUC_{1 - \mu}^1(J,E) := \left\{ u \in C^1(\dot{J},E) : u, \dot{u} \in BUC_{1 - \mu}(J, E) \right\}, \quad \mu \in (0,1)
\]
and we set
\[
BUC_0 (J,E) := BUC(J,E) \qquad BUC^1_0(J,E) := BUC^1(J,E).
\]

Now, fix $\mu \in (0,1]$ and consider the spaces 
\[
\begin{split}
&\mathbb{E}_0(J) := BUC_{1 - \mu}(J,h^{\alpha}(\Tone)), \qquad \alpha \in \mathbb{R}_+ \setminus \mathbb{Z}\\
&\mathbb{E}_1(J) := BUC^1_{1 - \mu}(J,h^{\alpha}(\Tone)) \cap BUC_{1-\mu}(J,h^{2m + \alpha}(\Tone)),
\end{split}
\] 
where $\mathbb{E}_1(J)$ is a Banach space with the norm 
\[
\| u \|_{\mathbb{E}_1(J)} := \sup_{t \in \dot{J}} t^{1 - \mu} \Big( \| \dot{u}(t) \|_{h^{\alpha}} + \| u(t) \|_{h^{2m + \alpha}} \Big).
\]
It follows that the trace operator $\gamma: \mathbb{E}_1(J) \rightarrow h^{\alpha}(\Tone)$, defined by $\gamma v := v(0)$, is well-defined and we denote by $\gamma \mathbb{E}_1$ the image of $\gamma$ in $h^{\alpha}(\Tone)$, which is a Banach space when equipped with the norm
\[
\| f \|_{\gamma \mathbb{E}_1} := \inf \Big\{ \|v\|_{\mathbb{E}_1(J)}: v \in \mathbb{E}_1(J) \, \text{and} \, \gamma v = f \Big\}.
\]
By \cite[Theorem III.2.3.1]{AM95} and Proposition~\ref{Thm:hAlphaCAlphaClosure}(b) we see that 
\[
\begin{split}
\gamma \mathbb{E}_1 = (h^{\alpha}(\Tone), h^{2m + \alpha}(\Tone))_{\mu} = h^{2m \mu + \alpha}(\Tone), \qquad &\mu \in (0,1)\\
\gamma \mathbb{E}_1 := h^{2m + \alpha}(\Tone)\qquad &\mu = 1,
\end{split}
\]
where $(\cdot,\cdot)_{\eta}$ denotes the continuous interpolation functor of DaPrato and Grisvard, c.f. \cite{AM95,LUN95}, and the interpolation space characterization holds when $2m \mu + \alpha \notin \mathbb{Z}$ (up to equivalent norms).

For $B \in \mathcal{L}(h^{2m + \alpha}(\Tone),h^{\alpha}(\Tone)),$ closed on $h^{\alpha}(\Tone)$, we say that $\big( \mathbb{E}_0(J), \mathbb{E}_1(J) \big)$ is a \emph{pair of (continuous) maximal regularity} for $B$ 
if 
\[
\left( \frac{d}{dt} + B, \, \gamma \right) \in \mathcal{L}_{isom}(\mathbb{E}_1(J), \mathbb{E}_0(J) \times \gamma \mathbb{E}_1),
\]
$\mu \in (0,1], \, \alpha \in \mathbb{R}_+ \setminus \mathbb{Z}$ and $J = [0,T]$ for some $T > 0$.
In particular, we see that $\big( \mathbb{E}_0(J), \mathbb{E}_1(J) \big)$ is a pair of maximal regularity for $B$ if and only if for every $(f, u_0) \in \mathbb{E}_0(J) \times \gamma \mathbb{E}_1$, there exists a unique solution $u \in \mathbb{E}_1(J)$ to the inhomogeneous Cauchy problem with operator $B$.


\subsection{Maximal Regularity and Generation of Analytic Semigroups}

Our goal is to show that $\big( \mathbb{E}_0(J), \mathbb{E}_1(J) \big)$ is a pair of maximal regularity for $\mathcal{A}$ for arbitrary $\alpha \in \mathbb{R}_+ \setminus \mathbb{Z}$ and $J=[0,T]$, given minimal regularity assumptions on the coefficients $b_k$. In particular, fix $m \in \mathbb{N}$ and coefficients $b_k \in h^{\alpha}(\Tone)$, $k = 0, \ldots, 2m$ such that $\mathcal{A} := \mathcal{A}(\cdot,D) := \sum_{k \leq 2m} b_k(\cdot) D^k$ satisfies the \emph{uniform ellipticity} conditions \eqref{Eqn:UniformEllipticity}. The tool we are going to use to prove this maximal regularity result is the following theorem of DaPrato, Grisvard and Angenent, which was originally proved by DaPrato and Grisvard \cite{DPG79} in the case $\mu = 1$ and then generalized to $\mu \in (0,1)$ by Angenent \cite{AN90}.

\begin{thm}[DaPrato-Grisvard-Angenent]\label{Thm:DaPratoGrisvard}
Fix $\eta \in (0,1)$, $\mu \in (0,1]$ and $J := [0,T]$ for $T > 0$. Suppose that $(E_0,E_1)$ is a pair of densely embedded Banach spaces and consider an operator $A \in \mathcal{H}(E_1,E_0)$. Now, set 
\begin{equation*}
\begin{split}
E_2 := E_2(A) := (D(A^2), \| \cdot \|_2) \quad \text{equipped with the norm} \quad \| \cdot \|_2 := \| A \cdot \|_1 + \| \cdot \|_1,\\
\begin{aligned}
&E_{\eta} := (E_0, E_1)_{\eta}, \qquad \qquad E_{1 + \eta} := E_{1 + \eta}(A) := (E_1, E_2(A))_{\eta}, \qquad \qquad\\
&\qquad A_{\eta} := \text{ the maximal $E_{\eta}$-realization of $A$.}
\end{aligned}
\end{split}
\end{equation*}
It follows that
\[
\big( \mathbb{E}_{\eta}(J), \mathbb{E}_{1 + \eta}(J) \big) := \Big( BUC_{1 - \mu}(J,E_{\eta}), BUC_{1 - \mu}^1(J, E_{\eta}) \cap BUC_{1 - \mu}(J, E_{1 + \eta}) \Big),
\]
is a pair of maximal regularity for $A_{\eta}$.
\end{thm}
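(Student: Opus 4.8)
The plan is to reduce the statement to the classical continuous maximal regularity theorems of DaPrato--Grisvard \cite{DPG79} (for $\mu = 1$) and Angenent \cite{AN90} (for $\mu \in (0,1)$), which assert: whenever $-B$ generates a strongly continuous analytic semigroup on a Banach space $F_0$ with dense domain $F_1 := D(B) \dhook F_0$, the pair $\big(BUC_{1-\mu}(J,F_0),\, BUC^1_{1-\mu}(J,F_0) \cap BUC_{1-\mu}(J,F_1)\big)$ is a pair of maximal regularity for $B$, with trace space $(F_0,F_1)_\mu$ when $\mu < 1$ and $F_1$ when $\mu = 1$. It then suffices to verify, with $F_0 = E_\eta$, $F_1 = E_{1+\eta}$ and $B = A_\eta$, the three hypotheses: (a) $A_\eta \in \mathcal{H}(E_{1+\eta},E_\eta)$ with $D(A_\eta) = E_{1+\eta}$ up to equivalent norms; (b) $E_{1+\eta} \dhook E_\eta$ densely; and (c) the semigroup generated by $-A_\eta$ is strongly continuous.

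The substantive point is (a). From $A \in \mathcal{H}(E_1,E_0)$, after shifting $A$ by a large $\omega$ (which changes neither membership in $\mathcal{H}$ nor the interpolation spaces) one has that $A$ restricts to a member of $\mathcal{H}(E_2,E_1)$, where $E_2 = D(A^2)$ carries the graph norm $\|A\cdot\|_1 + \|\cdot\|_1$ and $D(A^2) = \{x \in E_1 : Ax \in E_1\}$ since $D(A) = E_1$; this is just the fact that an analytic semigroup restricts to an analytic semigroup on the domain of its generator. The continuous interpolation functor $(\cdot,\cdot)_\eta = (\cdot,\cdot)^0_{\eta,\infty}$ is an admissible interpolation functor that is compatible with passing to the part of an operator in the interpolation space; applying it to $A \in \mathcal{H}(E_1,E_0) \cap \mathcal{H}(E_2,E_1)$ yields that the maximal $E_\eta$-realization $A_\eta$ of $A$ lies in $\mathcal{H}\big((E_1,E_2)_\eta,(E_0,E_1)_\eta\big) = \mathcal{H}(E_{1+\eta},E_\eta)$, with $D(A_\eta) = (E_1,E_2)_\eta = E_{1+\eta}$. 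This interpolation--extrapolation principle, together with the identification of the domain of a realized operator, is precisely what forces $E_2 = D(A^2)$ into the statement; I would cite \cite{AM95,LUN95} for it.

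For (b), density follows from $E_1 \dhook E_0$: by construction $E_1$ is dense in $E_\eta = (E_0,E_1)_\eta$ (the continuous interpolation space is the closure of $E_1$ in the $(E_0,E_1)_{\eta,\infty}$-norm), and $E_1 \subseteq E_{1+\eta} \subseteq E_\eta$, so $E_{1+\eta}$ is dense in $E_\eta$ as well. Then (c) is automatic, since a densely defined generator of an analytic semigroup generates a strongly continuous one. With (a)--(c) in hand, \cite{DPG79} ($\mu = 1$) and \cite{AN90} ($\mu \in (0,1)$) give exactly the assertion that $\big(\mathbb{E}_\eta(J),\mathbb{E}_{1+\eta}(J)\big)$ is a pair of maximal regularity for $A_\eta$, and they simultaneously identify $\gamma\mathbb{E}_{1+\eta}$. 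The main obstacle is not the abstract parabolic problem itself, which is handled verbatim by those references, but the interpolation--extrapolation bookkeeping behind (a): one must carefully track that $A_\eta$ remains in the class $\mathcal{H}$ and that its domain is the interpolation space $(E_1,D(A^2))_\eta$, rather than some other fractional-power space.
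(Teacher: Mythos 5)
You should first note that the paper does not prove this statement at all: Theorem~\ref{Thm:DaPratoGrisvard} is quoted as a known result, with the attribution given immediately before it (DaPrato--Grisvard \cite{DPG79} for $\mu=1$, Angenent \cite{AN90} for $\mu\in(0,1)$), and the only ``proof-like'' content in the paper is the remark afterwards that $A_\eta\in\mathcal{H}(E_{1+\eta},E_\eta)$, cf.\ \cite[Section III.3.2]{AM95}. Measured against that, your proposal contains a genuine gap: the ``classical'' statement you attribute to \cite{DPG79,AN90} --- that whenever $-B$ generates a strongly continuous analytic semigroup on $F_0$ with dense domain $F_1=D(B)$, the pair $\big(BUC_{1-\mu}(J,F_0),\,BUC^1_{1-\mu}(J,F_0)\cap BUC_{1-\mu}(J,F_1)\big)$ is a pair of maximal regularity for $B$ --- is false. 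Continuous maximal regularity is not a consequence of generation plus density: already for $\mu=1$, $C(J;X)$-maximal regularity for an unbounded generator forces $X$ to contain a copy of $c_0$ (Baillon), and the standard examples ($-\Delta$ on $L^p$ or on spaces of continuous functions) fail it; the weighted case $\mu\in(0,1)$ inherits the same obstruction, since the weight is only active near $t=0$. What DaPrato--Grisvard and Angenent actually prove is precisely the statement of the theorem as written: maximal regularity holds for the realization $A_\eta$ in the \emph{continuous interpolation} spaces $E_\eta=(E_0,E_1)_\eta$, $E_{1+\eta}=(E_1,E_2)_\eta$, and the interpolation structure is used essentially in the proof (via the characterization of $(E_0,E_1)_\eta$ through the decay $t^{1-\eta}\|Ae^{-tA}x\|_0\to 0$ and the resulting estimates on the variation-of-parameters formula). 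So your reduction either rests on a false premise or, if one substitutes the correct classical theorem for your premise, becomes circular: conditions (a)--(c) are exactly the trivial part, and the analytic core of the theorem is never addressed.

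The peripheral bookkeeping you do is essentially sound and is indeed standard: $A$ restricts to an element of $\mathcal{H}(E_2,E_1)$, the continuous interpolation functor carries this to $A_\eta\in\mathcal{H}(E_{1+\eta},E_\eta)$ with $D(A_\eta)=E_{1+\eta}$ up to equivalent norms (this is the content of \cite[Section III.3.2]{AM95}, which the paper cites), $E_1$ is dense in $E_\eta$ for the continuous interpolation functor, and density yields strong continuity of the semigroup. But none of this yields the isomorphism $\left(\tfrac{d}{dt}+A_\eta,\gamma\right)\in\mathcal{L}_{isom}\big(\mathbb{E}_{1+\eta}(J),\mathbb{E}_\eta(J)\times\gamma\mathbb{E}_{1+\eta}\big)$. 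A correct write-up should either cite \cite{DPG79,AN90} (or \cite{AM95,LUN95}) for exactly the interpolation-space statement, as the paper does, or actually prove the maximal regularity estimate for the inhomogeneous problem in $BUC_{1-\mu}(J,E_\eta)$ using the semigroup characterization of the continuous interpolation spaces; as it stands, that step is missing.
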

It is also a well-known result that $A_{\eta} \in \mathcal{H}(E_{1 + \eta}, E_{\eta}),$ c.f. \cite[Section III.3.2]{AM95}.

Due to the continuous interpolation spaces constructed in the theorem, we see that we cannot directly derive maximal regularity results for $\mathcal{A}$ in $h^{\alpha}(\Tone)$. In particular, when applying Theorem~\ref{Thm:DaPratoGrisvard}, the derived maximal regularity results are necessarily in a little-H$\ddo$lder space with slightly larger exponent than where we assume analytic semigroup generation results. Moreover, it is in general quite difficult to characterize the operator-dependent space $E_2(\mathcal{A})$, which is in turn dependent upon the regularity conditions imposed on the coefficients $b_k$. However, we are able to take advantage of flexibility in Theorem~\ref{FullResult}, with respect to the regularity exponents, in order to work around these difficulties and prove the following result.

\begin{thm}\label{Thm:ExistenceAndUniqueness}
Fix $\alpha \in \mathbb{R}_+ \setminus \mathbb{Z}$, $m \in \mathbb{N}$, $\mu \in (0,1]$ and $J = [0,T]$, for $T > 0$ arbitrary. Suppose the operator $\mathcal{A} := \mathcal{A}(\cdot,D) = \sum_{k \leq 2m} b_k(\cdot) D^k$, with coefficients $b_k \in h^{\alpha}(\Tone)$ is uniformly elliptic, as in \eqref{Eqn:UniformEllipticity}. Then
\[
\left( \frac{d}{dt} + \mathcal{A}, \, \gamma \right) \in \mathcal{L}_{isom}(\mathbb{E}_1(J), \mathbb{E}_0(J) \times \gamma \mathbb{E}_1).
\]
In particular, given any pair $(f, u_0) \in BU\!C_{1 - \mu}(J, h^{\alpha}(\Tone)) \times \gamma \mathbb{E}_1$, there exists a unique solution $u \in BU\!C_{1 - \mu}^1 (J, h^{\alpha}(\Tone)) \cap BU\!C_{1 - \mu}(J, h^{2m + \alpha}(\Tone))$ to the inhomogeneous Cauchy problem \eqref{Eqn:CP2} on $J$.
\end{thm}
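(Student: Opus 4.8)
The plan is to reduce Theorem~\ref{Thm:ExistenceAndUniqueness} to the DaPrato--Grisvard--Angenent theorem (Theorem~\ref{Thm:DaPratoGrisvard}), applied not at the exponent $\alpha$ itself but at a slightly smaller exponent $\beta$, chosen so that the continuous interpolation space $(h^{\beta}(\Tone), h^{2m+\beta}(\Tone))_{\eta}$ lands exactly on $h^{\alpha}(\Tone)$. Concretely, fix $\eta \in \left(0, \min\{1, \alpha/(2m)\}\right)$ such that $\beta := \alpha - 2m\eta \notin \mathbb{Z}$; this is possible since only finitely many $\eta$ in the admissible interval are excluded and $\alpha \notin \mathbb{Z}$. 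Then $\beta \in \mathbb{R}_+ \setminus \mathbb{Z}$ and $2m + \beta \notin \mathbb{Z}$, so $h^{\beta}(\Tone)$ and $h^{2m+\beta}(\Tone)$ are well defined, and $h^{2m+\beta}(\Tone) \dhook h^{\beta}(\Tone)$ by Proposition~\ref{Thm:hAlphaCAlphaClosure}(a); this furnishes the densely embedded pair $(E_0, E_1) := (h^{\beta}(\Tone), h^{2m+\beta}(\Tone))$ required by Theorem~\ref{Thm:DaPratoGrisvard}.

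Since $b_k \in h^{\alpha}(\Tone) \hookrightarrow h^{\beta}(\Tone)$ for each $k$ (Proposition~\ref{Thm:hAlphaCAlphaClosure}(a), as $\beta < \alpha$), Theorem~\ref{FullResult} applied at exponent $\beta$ gives $\mathcal{A} \in \mathcal{H}(h^{2m+\beta}(\Tone), h^{\beta}(\Tone)) = \mathcal{H}(E_1, E_0)$. I then invoke Theorem~\ref{Thm:DaPratoGrisvard} with this $A := \mathcal{A}$, the chosen $\eta$, and the given $\mu$ and $J$. By Proposition~\ref{Thm:hAlphaCAlphaClosure}(b), $E_{\eta} = (h^{\beta}, h^{2m+\beta})_{\eta} = h^{2m\eta + \beta}(\Tone) = h^{\alpha}(\Tone)$, using $2m\eta + \beta = \alpha \notin \mathbb{Z}$. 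Hence $\mathbb{E}_{\eta}(J) = BUC_{1-\mu}(J, h^{\alpha}(\Tone)) = \mathbb{E}_0(J)$, and the theorem asserts that $\left(\frac{d}{dt} + A_{\eta},\, \gamma\right) \in \mathcal{L}_{isom}\!\left(\mathbb{E}_{1+\eta}(J),\, \mathbb{E}_0(J) \times \gamma\mathbb{E}_{1+\eta}\right)$, where $A_{\eta}$ is the maximal $h^{\alpha}(\Tone)$-realization of $\mathcal{A}$ and $E_{1+\eta} = D(A_{\eta})$.

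The crux of the argument — and the only step I expect to be a real obstacle — is identifying $E_{1+\eta} = D(A_{\eta})$ with $h^{2m+\alpha}(\Tone)$, so that $\mathbb{E}_{1+\eta}(J)$ and $\mathbb{E}_1(J)$ coincide (and likewise $\gamma\mathbb{E}_{1+\eta} = \gamma\mathbb{E}_1$) and $A_{\eta} = \mathcal{A}$ as operators on $h^{\alpha}(\Tone)$. By definition $D(A_{\eta}) = \{u \in h^{2m+\beta}(\Tone) : \mathcal{A}u \in h^{\alpha}(\Tone)\}$. The inclusion $h^{2m+\alpha}(\Tone) \subset D(A_{\eta})$ is immediate, since $h^{2m+\alpha} \hookrightarrow h^{2m+\beta}$ and $\mathcal{A}$ maps $h^{2m+\alpha}(\Tone)$ into $h^{\alpha}(\Tone)$ ($h^{\alpha}(\Tone)$ being a Banach algebra). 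For the reverse inclusion I run an elliptic bootstrap: let $u \in h^{2m+\beta}(\Tone)$ with $g := \mathcal{A}u \in h^{\alpha}(\Tone)$, and pick $\lambda$ with $\re\,\lambda$ exceeding the spectral bounds provided by Theorem~\ref{FullResult} at both exponents $\alpha$ and $\beta$. Because $\eta \le 1$ forces $2m + \beta \ge \alpha$, we have $u \in h^{2m+\beta}(\Tone) \hookrightarrow h^{\alpha}(\Tone)$, hence $\lambda u + g \in h^{\alpha}(\Tone)$; setting $w := (\lambda + \mathcal{A})^{-1}(\lambda u + g) \in h^{2m+\alpha}(\Tone)$ (the inverse taken in $\mathcal{L}(h^{\alpha}, h^{2m+\alpha})$), one checks $(\lambda + \mathcal{A})w = \lambda u + g = (\lambda + \mathcal{A})u$ in $h^{\beta}(\Tone)$, and injectivity of $\lambda + \mathcal{A}$ on $h^{2m+\beta}(\Tone)$ yields $u = w \in h^{2m+\alpha}(\Tone)$. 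Thus $D(A_{\eta}) = h^{2m+\alpha}(\Tone)$ and $A_{\eta} = \mathcal{A}$.

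Combining these identifications, $\mathbb{E}_{1+\eta}(J) = BUC^1_{1-\mu}(J, h^{\alpha}(\Tone)) \cap BUC_{1-\mu}(J, h^{2m+\alpha}(\Tone)) = \mathbb{E}_1(J)$ and $\gamma\mathbb{E}_{1+\eta} = \gamma\mathbb{E}_1$, so Theorem~\ref{Thm:DaPratoGrisvard} delivers $\left(\frac{d}{dt} + \mathcal{A},\, \gamma\right) \in \mathcal{L}_{isom}\!\left(\mathbb{E}_1(J),\, \mathbb{E}_0(J) \times \gamma\mathbb{E}_1\right)$, which is the assertion. The final ``in particular'' statement is then just the unpacking of this isomorphism: given $(f, u_0) \in \mathbb{E}_0(J) \times \gamma\mathbb{E}_1 = BUC_{1-\mu}(J, h^{\alpha}(\Tone)) \times \gamma\mathbb{E}_1$, the unique preimage $u \in \mathbb{E}_1(J) = BUC^1_{1-\mu}(J, h^{\alpha}(\Tone)) \cap BUC_{1-\mu}(J, h^{2m+\alpha}(\Tone))$ satisfies $\dot u(t) + \mathcal{A}u(t) = f(t)$ in $h^{\alpha}(\Tone)$ for $t \in \dot J$ and $u(0) = u_0$, i.e.\ $u$ solves \eqref{Eqn:CP2} on $J$ and is the unique such solution.
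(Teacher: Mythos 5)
Your proposal is correct and follows essentially the same route as the paper: apply the DaPrato--Grisvard--Angenent theorem at a lowered exponent $\beta = \alpha - 2m\eta$, identify $E_{\eta} = h^{\alpha}(\Tone)$ via Proposition~\ref{Thm:hAlphaCAlphaClosure}(b), and then use Theorem~\ref{FullResult} at level $\alpha$ together with the remark after Theorem~\ref{Thm:DaPratoGrisvard} to identify $E_{1+\eta}$ with $h^{2m+\alpha}(\Tone)$. The only (cosmetic) difference is in that last identification: you run an explicit bootstrap through $D(A_{\eta})$ using injectivity of $\lambda + \mathcal{A}$ on $h^{2m+\beta}(\Tone)$, while the paper composes the two isomorphisms $(\omega + \mathcal{A})$ on $h^{2m+\alpha}(\Tone)$ and $E_{1+\eta}$ — the same underlying argument, which yours spells out in more detail.
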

 
\begin{proof}
Fix $\beta \in \mathbb{R}_+ \setminus \mathbb{Z}$ such that $\beta < \alpha < 2m + \beta$ and fix $\eta := \frac{\alpha - \beta}{2m}$, then we see that $\eta \in (0,1)$ and $2m \eta + \beta = \alpha$. $\mathcal{A}$ is trivially realized as an operator from $h^{2m + \beta}(\Tone)$ to $h^{\beta}(\Tone)$ by Proposition~\ref{Thm:hAlphaCAlphaClosure}(a), 
so that, by Theorem~\ref{FullResult}, we know $\mathcal{A} \in \mathcal{H}(h^{2m + \beta}(\Tone),h^{\beta}(\Tone))$. Now we construct the spaces $E_2, E_{\eta}$ and $E_{1 + \eta}$ as in Theorem~\ref{Thm:DaPratoGrisvard}, and we apply Proposition~\ref{Thm:hAlphaCAlphaClosure}(b) when possible. Namely, we set
\[
E_2 := \big\{ f \in h^{\beta}(\Tone): \mathcal{A}f \in h^{2m + \beta}(\Tone) \big\},
\]
equipped with the graph norm $\| \cdot \|_2 := \|\mathcal{A} \cdot \|_{h^{2m + \beta}} + \| \cdot \|_{h^{2m + \beta}}$,
\begin{align*}
E_{\eta} &:= (h^{\beta}(\Tone), h^{2m + \beta}(\Tone))_{\eta} = h^{\alpha}(\Tone)\\
E_{1 + \eta} &:= (h^{2m + \beta}(\Tone), E_2)_{\eta};
\end{align*}
notice that, a priori, we cannot conclude $E_{1 + \eta}$ coincides with a little-H$\ddo$lder space without a proper characterization of $E_2$. However, by uniform ellipticity of $\mathcal{A}$, with coefficients $b_k$ in $h^{\alpha}(\Tone)$, we know that $\mathcal{A} \in \mathcal{H}(h^{2m + \alpha}(\Tone), h^{\alpha}(\Tone)),$ by Theorem~\ref{FullResult} again. Meanwhile, by the remark following Theorem~\ref{Thm:DaPratoGrisvard}, we see that $\mathcal{A} \in \mathcal{H}(E_{1 + \eta}, h^{\alpha}(\Tone))$. Hence, we can find $\omega > 0$ sufficiently large so that 
\[
(\omega + \mathcal{A}) \in \mathcal{L}_{isom}(h^{2m + \alpha}(\Tone), h^{\alpha}(\Tone)) \cap \mathcal{L}_{isom}(E_{1 + \eta}, h^{\alpha}(\Tone)).
\]
However, it follows that $(\omega + \mathcal{A})^{-1} \circ (\omega + \mathcal{A}): h^{2m + \alpha}(\Tone) \rightarrow E_{1 + \eta}$ is an isometric isomorphism, by commutativity. So, $h^{2m + \alpha}(\Tone)$ and $E_{1 + \eta}$ coincide, up to equivalent norms, and
it follows that $\big( \mathbb{E}_0(J), \mathbb{E}_1(J) \big)$ is a pair of (continuous) maximal regularity for $\mathcal{A}$, by Theorem~\ref{Thm:DaPratoGrisvard}, which demonstrates the claim.
\end{proof}

\section{Vector-Valued Setting}\label{Section:VectorSetting}

For the remainder of the paper, let $E = (E, | \cdot |)$ denote an arbitrary (non-trivial) Banach space
over $\mathbb{C}$. Again, consider the inhomogeneous problem \eqref{Eqn:CP} with periodicity enforced.
However, suppose that one is given vector-valued functions, $u_0, f(t, \cdot): \Tone \rightarrow E$, and operator-valued
coefficients, $b_k : \Tone \rightarrow \mathcal{L}(E)$. It turns out that, with only minor modifications and appropriate 
alterations to definitions, the preceding results continue to hold in this more general setting. 
In this section, we highlight the necessary changes to the preceding theory and state results in this vector-valued setting.

\subsection{Vector-Valued Function Spaces}

Following common conventions, we denote by $C(\Tone, E)$, $C^{\theta}(\Tone, E)$, and $h^{\theta}(\Tone, E)$, 
the classes of regular $E$-valued functions analogous to the scalar-valued cases defined in Section~\ref{SubSec:Regularity},
the definitions of which remain essentially unchanged.
Moreover, one will note that Proposition~\ref{Thm:hAlphaCAlphaClosure} is a simplified version
of \cite[Proposition 0.2.1 and Theorem 1.2.17]{LUN95}, which were already stated in the vector-valued setting, so there is no
trouble in getting these same results for $E$-valued functions. In order to give an adequate definition of $E$-valued Besov spaces, 
however, one will need the concept of vector-valued distributions. 

Taking $\mathcal{D}(\Tone)$ to be the smooth $\mathbb{C}$-valued functions over $\Tone$, as before,
we define the space of $E$-valued distributions $\mathcal{D}'(\Tone, E) := \mathcal{L}(\mathcal{D}(\Tone), E)$ and 
we equip $\mathcal{D}'(\Tone,E)$ with the weak-star topology over $\mathcal{D}(\Tone)$. One can see that the same 
definitions of Fourier coefficients and results on Fourier series representations continue to hold, c.f. \cite{AB04}.
In particular, one can see that, for every $f \in \mathcal{D}'(\Tone, E)$, it holds that
\[
f = \sum_{k \in \mathbb{Z}} e_k \otimes \hat{f}(k) \qquad \text{(convergence in $\mathcal{D}'(\Tone, E)$)},
\]
where $e_k \in \mathcal{D}(\Tone)$ has the same definition as before and $e_k \otimes y$ denotes the function
$[x \mapsto e^{ikx}y]: \Tone \rightarrow E$ for $y \in E$ given.
Then, we define the $E$-valued periodic Besov spaces $B^s_{p,q}(\Tone, E)$ as before,
by making use of collections of dyadic decompositions $\Phi(\mathbb{R})$, and we derive analogous results to those discussed
in the scalar setting. 

\subsection{Operator-Valued Fourier Multipliers}

Now, with vector-valued Besov\\ spaces established, we consider Fourier multiplier results in this setting. As discussed in \cite{AB04}, 
the \emph{Fourier type} of the underlying Banach space $E$ will affect the statement of the Fourier multiplier result. To be clear,
we are given a sequence $\left( M_k \right)_k \subset \mathcal{L}(E)$ and consider the associated (formal) operator
\[
T : \sum_{k \in \mathbb{Z}} e_k \otimes \hat{f}(k) \longmapsto \sum_{k \in \mathbb{Z}} e_k \otimes M_k \hat{f}(k).
\]
The following multiplier theorem will work for the general case where $E$ is a Banach space with arbitrary Fourier type. 
We note that Lemma~\ref{Lem:FourierTypeBound} does not hold in this general case.

\begin{thm}\label{Thm:BanachFourierMultipliers}
Let $r, s \in \mathbb{R}_+$ and $1 \leq p, q \leq \infty$. Suppose that $(M_k)_{k \in \mathbb{Z}} \subset \mathcal{L}(E)$ is a sequence such that
\begin{equation}\label{Eqn:BanachMultiplierAssumptions}
\begin{split}
s_1 &:= \sup_{k \in \mathbb{Z} \setminus \{ 0 \}} |k|^{r - s} \| M_k \| < \infty,\\
s_2 &:= \sup_{k \in \mathbb{Z} \setminus \{ 0 \}} |k|^{r - s + 1} \| M_{k + 1} - M_k \| < \infty,\\
s_3 &:= \sup_{k \in \mathbb{Z} \setminus \{ 0 \}} |k|^{r - s + 2} \| M_{k + 1} - 2 M_k + M_{k - 1} \| < \infty.
\end{split}
\end{equation}
Then the Fourier multiplier with symbol $( M_k )_{k \in \mathbb{Z}}$ is a continuous mapping from $B^{s}_{p,q}(\Tone, E)$ to $B^{r}_{p,q}(\Tone, E)$, namely
\[
T: \left[ \sum_{k \in \mathbb{Z}} e_k \otimes \hat{f}(k) \longmapsto \sum_{k \in \mathbb{Z}} e_k \otimes M_k \hat{f}(k) \right] \in \mathcal{L} \left( B^{s}_{p,q}(\Tone, E), B^{r}_{p,q}(\Tone, E) \right).
\]
\end{thm}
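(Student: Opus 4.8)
The plan is to follow the architecture of the proof of Theorem~\ref{Thm:FourierMultipliers} essentially verbatim, the one substantive change being that the scalar Fourier-type-$2$ transference estimate of Lemma~\ref{Lem:FourierTypeBound} is unavailable for an arbitrary Banach space $E$ and must be replaced by the operator-valued transference lemma of Arendt and Bu, \cite[Lemma~4.4]{AB04}. For a general Banach space this lemma requires a \emph{second-order} control of the symbol: for $1 \le p \le \infty$ and $m \in C_c(\mathbb{R}, \mathcal{L}(E)) \cap \mathcal{F}L_1(\mathbb{R}, \mathcal{L}(E))$ one has
\[
\left\| \sum_{k \in \mathbb{Z}} e_k \otimes m(k) \hat{f}(k) \right\|_p \le C_p \, \eta(m) \left\| \sum_{k \in \mathbb{Z}} e_k \otimes \hat{f}(k) \right\|_p
\]
for every trigonometric polynomial $f \in L_p(\Tone, E)$, where now $\eta(m) := \inf \{ \, \| m(a \cdot) \|_{W^2_1(\mathbb{R}, \mathcal{L}(E))} : a > 0 \, \}$ involves the Sobolev space $W^2_1$ of $\mathcal{L}(E)$-valued functions with two derivatives in $L_1$, in place of the scalar $W^1_2$. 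This is precisely the price of allowing arbitrary Fourier type, and the three summands of the $W^2_1$-norm are exactly what force the three hypotheses $s_1, s_2, s_3$ in \eqref{Eqn:BanachMultiplierAssumptions}.

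With this substitution available, fix $(M_k)_k \subset \mathcal{L}(E)$ satisfying \eqref{Eqn:BanachMultiplierAssumptions}, parameters $r, s \in \mathbb{R}_+$, $1 \le p, q \le \infty$, and a system $\varphi = \{ \varphi_j \}_{j \ge 0} \in \Phi(\mathbb{R})$. As in the scalar proof --- and using the same device as Matioc \cite[Theorem~2.2.1]{MAT09} to absorb the (possibly nonzero) exponent gap $r - s$ --- it suffices to exhibit a constant $C$, depending only on $s_1, s_2, s_3$ and $|r-s|$, with
\[
\left\| \sum_{k \in \mathbb{Z}} e_k \otimes \bigl( 2^{(r-s)j} M_k \bigr) \varphi_j(k) \hat{f}(k) \right\|_p \le C \left\| \sum_{k \in \mathbb{Z}} e_k \otimes \varphi_j(k) \hat{f}(k) \right\|_p
\]
uniformly in $j \ge 0$ and $f \in B^s_{p,q}(\Tone, E)$; raising to the $q$-th power and summing against the weights $2^{rjq}$ then delivers $T \in \mathcal{L}(B^s_{p,q}(\Tone, E), B^r_{p,q}(\Tone, E))$ with norm controlled by $s_1, s_2, s_3$. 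To get this bound one interpolates the prescribed data into functions $m_j : \mathbb{R} \to \mathcal{L}(E)$ supported in the dyadic band $\{ 2^{j-2} \le |x| \le 2^{j+2} \}$ (and $\{ |x| \le 2 \}$ for $j = 0$), with $m_j(k) = 2^{(r-s)j} M_k$ for integers $2^{j-1} \le |k| \le 2^{j+1}$ (resp.\ $m_0(k) = M_k$ for $|k| \le 1$); the only real departure from the scalar argument is that the interpolation between consecutive integers must now be carried out in a $C^2$ (for instance piecewise-quadratic, or mollified) fashion, so that $\| m_j \|_{W^2_1}$ is finite.

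Since then $m_j \in C_c(\mathbb{R}, \mathcal{L}(E)) \cap \mathcal{F}L_1(\mathbb{R}, \mathcal{L}(E))$ and $\sum_k e_k \otimes \varphi_j(k) \hat{f}(k)$ is a trigonometric polynomial by compactness of $\text{supp} \, \varphi_j$, the transference lemma gives the displayed estimate with $C = C_p \, \eta(m_j) \le C_p \, \| m_j(2^j \cdot) \|_{W^2_1}$. After the rescaling $x \mapsto 2^j x$ the support of $m_j(2^j \cdot)$ lies in $[ \frac{1}{4}, 4 ]$, and the $L_1$-norms of $m_j(2^j \cdot)$, of its first derivative, and of its second derivative are dominated, respectively, by $\sup |m_j|$, by the scaled first differences $\sup_k 2^{(r-s+1)j} \| M_{k+1} - M_k \|$, and by the scaled second differences $\sup_k 2^{(r-s+2)j} \| M_{k+1} - 2 M_k + M_{k-1} \|$ over the relevant dyadic block; eliminating the powers of $2^j$ against $|k|^{r-s}$, $|k|^{r-s+1}$, $|k|^{r-s+2}$ exactly as in the scalar proof bounds these by $2^{|r-s|} s_1$, $2^{|r-s+1|} s_2$, $2^{|r-s+2|} s_3$ (with the $j = 0$ term handled separately, as before). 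Hence $\sup_{j \ge 0} \| m_j(2^j \cdot) \|_{W^2_1}$ is finite and controlled by $s_1, s_2, s_3$ alone, and the theorem follows.

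\textbf{Main obstacle.} The genuinely new ingredient --- everything else being a mechanical transcription of the scalar case --- is the construction of the auxiliary symbols $m_j$ with a \emph{uniformly} controlled \emph{second} Sobolev norm: the piecewise-affine interpolant used for Theorem~\ref{Thm:FourierMultipliers} has a distributional (measure-valued) second derivative, so one must pass to a $C^2$ (or smooth) interpolant and verify that the second-difference hypothesis $s_3$ does translate, after rescaling, into the required bound on $\| m_j'' \|_{L_1}$. This is exactly the technical mechanism of \cite[Lemma~4.4 and Theorem~4.5(i)]{AB04}, here adapted to accommodate the exponent gap $r \ne s$.
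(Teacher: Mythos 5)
Your proposal is correct and follows essentially the same route the paper intends, namely restating the scalar argument of Theorem~\ref{Thm:FourierMultipliers} (following \cite[Theorem 2.2.1]{MAT09} and \cite[Theorem 4.5(i)]{AB04}) in the $E$-valued setting, with Lemma~\ref{Lem:FourierTypeBound} replaced by a transference estimate valid for trivial Fourier type and with the interpolants $m_j$ smoothed so that their second derivatives lie in $L_1$ and are controlled by $s_3$. Your identification of the $W^2_1$-type bound (equivalently $\| \check{m} \|_{L_1} \lesssim \| m \|_{L_1} + \| m'' \|_{L_1}$) as the precise reason the extra hypothesis $s_3$ appears is exactly the intended mechanism.
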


A proof of Theorem~\ref{Thm:BanachFourierMultipliers} follows from \cite[Theorem 2.2.1]{MAT09}, by restating the proof in the $E$-valued setting.
On the other hand, if we find that the Fourier type of $E$ is in the interval $(1,2]$, then Lemma~\ref{Lem:FourierTypeBound} is known to hold
and we have the analogous statement to Theorem~\ref{Thm:FourierMultipliers}, without the necessity of checking the term $s_3$. Note that
this sharper case includes the situation $E$ a Hilbert space, where the Fourier type is exactly $2$.

\subsection{Ellipticity With Operator-Valued Coefficients}

Now, fix a collection $\{ b_k : k = 0, \ldots, 2m \} \subset h^{\alpha}(\Tone, \mathcal{L}(E))$ of operator-valued coefficient functions and consider the differential operator $\mathcal{A}$, acting on $h^{2m + \alpha}(\Tone, E)$, defined by
\[
\mathcal{A}u(x) := \mathcal{A}(x,D) u(x) := \sum_{k = 0}^{2m} b_k(x) \, (D^k u)(x) = \sum_{k = 0}^{2m} i^k \, b_k(x) \, u^{(k)}(x), \qquad x \in \Tone.
\]
By the embedding property Proposition~\ref{Thm:hAlphaCAlphaClosure}(a) and the regularity assumptions on $b_k$ and $u$, it follows immediately that $\mathcal{A}$ maps $h^{2m + \alpha}(\Tone)$ into $h^{\alpha}(\Tone)$. Now, denote by $\sigma \mathcal{A}: \Tone \times \mathbb{R} \rightarrow \mathcal{L}(E)$ the \emph{principal symbol} of $\mathcal{A}$, defined by $\sigma \mathcal{A}(x,\xi) := \xi^{2m} b_{2m}(x).$ We say that 
$\mathcal{A}$ is a \emph{normally elliptic} operator on $\Tone$ if there exist constants $c_1 \ge 1$ and $\theta \in (\pi / 2, \pi)$ so that the properties
\begin{equation}\label{Eqn:NormallyElliptic}
\begin{split}
\rho(- \sigma \mathcal{A}(x, \xi)) &\supset \Sigma_{\theta} := \{ z \in \mathbb{C}: | \arg z | \le \theta \} \cup \{ 0 \}\\
(1 + |\lambda|)& \| (\lambda + \sigma \mathcal{A}(x,\xi))^{-1} \| \le c_1, \quad \lambda \in \Sigma_{\theta},
\end{split}
\end{equation}
hold for all $x \in \Tone$ and $|\xi| = 1$. This definition coincides with the definition of normally elliptic operators presented in \cite[Section 3]{AM01} and one will note that this definition generalizes the notion of uniform ellipticity, as in \eqref{Eqn:UniformEllipticity}.
Moreover, as mentioned by Amann in \cite{AM01}, in the case that $E$ is finite-dimensional, this definition of normal ellipticity is equivalent to the condition that there exist $0 < r < R$ such that 
\[
\sigma(\sigma \mathcal{A}(x,\xi)) \subset \{z \in \mathbb{C}: \re \, z \ge r \} \cap \{ z \in \mathbb{C}: |z| \le R \}, \qquad \text{for } x \in \Tone, \; |\xi| = 1.
\]

\begin{thm}\label{Thm:BanachGenerationWithConstant}
Let $E$ be a Banach space, $m \in \mathbb{N}$, $\alpha \in \mathbb{R}_+ \setminus \mathbb{Z}$ and consider the differential operator $\mathcal{A}_b := b \,  D^{2m}$ with constant coefficient $b \in \mathcal{L}(E)$. If $\mathcal{A}_b$ is normally elliptic, with constant $c_1 > 0$,
and $c_2 \geq c_1 > 0$ is chosen so that $\| b \| \leq c_2$, then $- \mathcal{A}_b$ generates a (strongly continuous) analytic semigroup on $h^{\alpha}(\Tone, E)$ with domain $h^{2m + \alpha}(\Tone, E)$. Moreover, for any $\omega > 0$, there exists $\kappa = \kappa(\omega, c_1, c_2, m)$ such that 
\[
\mathcal{A}_b \in \mathcal{H} \big(h^{2m + \alpha},h^{\alpha},\kappa(\omega, c_1, c_2, m), \omega \big).
\]
\end{thm}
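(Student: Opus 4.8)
The plan is to follow the proof of Theorem~\ref{Thm:GenerationWithConstant} almost verbatim, replacing the scalar Fourier multiplier result with its operator-valued counterpart Theorem~\ref{Thm:BanachFourierMultipliers}; the only genuinely new work is the verification of the extra second-difference condition $s_3$ in \eqref{Eqn:BanachMultiplierAssumptions}. First I would realize $-\mathcal{A}_b$ as the Fourier multiplier with symbol $(-k^{2m}b)_{k \in \mathbb{Z}}$, observe (as in the scalar case, using the embedding $C^{2m+\sigma}(\Tone,E) \hookrightarrow C^\sigma(\Tone,E)$) that $(\lambda + \mathcal{A}_b) \in \mathcal{L}(C^{2m+\sigma}(\Tone,E), C^\sigma(\Tone,E))$, and then exhibit its inverse as the multiplier $R(\lambda)$ with symbol $\tilde M_k(\lambda) := (\lambda + k^{2m}b)^{-1}$ for $k \neq 0$ and $\tilde M_0(\lambda) := \lambda^{-1}$. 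Here normal ellipticity \eqref{Eqn:NormallyElliptic} enters: since $\theta > \pi/2$, the sector $\Sigma_\theta$ contains the open right half-plane, so for $\re\lambda \geq \omega > 0$ and $k \neq 0$ the element $\lambda/k^{2m}$ lies in $\Sigma_\theta$, hence $\lambda + k^{2m}b = k^{2m}\big((\lambda/k^{2m}) + b\big)$ is boundedly invertible with $\|\tilde M_k(\lambda)\| \leq c_1/(k^{2m} + |\lambda|)$.

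Next I would check conditions \eqref{Eqn:BanachMultiplierAssumptions} for this symbol with $r = 2m + \sigma$, $s = \sigma$ (so $r - s = 2m$). The bound on $s_1$ is immediate from the resolvent estimate above. For $s_2$ I would use the resolvent identity $\tilde M_{k+1} - \tilde M_k = \tilde M_{k+1}\big[(k^{2m} - (k+1)^{2m})b\big]\tilde M_k$, together with $|(k+1)^{2m} - k^{2m}| = O(k^{2m-1})$ and $\|b\| \leq c_2$, to get $\|\tilde M_{k+1} - \tilde M_k\| = O(c_1^2 c_2\, k^{-2m-1})$. For $s_3$ the cleanest route is to regard $g(t) := (\lambda + t^{2m}b)^{-1}$ as a smooth $\mathcal{L}(E)$-valued function on $[1,\infty)$, write $\tilde M_{k+1} - 2\tilde M_k + \tilde M_{k-1}$ as a double integral of $g''$ over $[k-1,k+1]$, and estimate $\|g''(t)\|$ by differentiating the resolvent twice: $g''$ is a sum of terms of the form $t^{2m-2}\, g\,b\,g$ and $t^{4m-2}\, g\,b\,g\,b\,g$ (with combinatorial constants depending only on $m$), so that $\|g(t)\| \leq c_1/(t^{2m} + |\lambda|) \leq c_1 t^{-2m}$ and $\|b\| \leq c_2$ give $\|g''(t)\| \leq C(c_1,c_2,m)\, t^{-2m-2}$. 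Thus $|k|^{2m+2}\|\tilde M_{k+1} - 2\tilde M_k + \tilde M_{k-1}\|$ is uniformly bounded for $|k| \geq 2$, and the finitely many remaining $k$ (including those involving $\tilde M_0 = \lambda^{-1}$) are controlled directly by $1/\omega$-type bounds. Applying Theorem~\ref{Thm:BanachFourierMultipliers} with $p = q = \infty$ and using $B^\sigma_{\infty,\infty}(\Tone,E) = C^\sigma(\Tone,E)$ gives $R(\lambda) \in \mathcal{L}(C^\sigma(\Tone,E), C^{2m+\sigma}(\Tone,E))$ for every $\sigma \in \mathbb{R}_+ \setminus \mathbb{Z}$; a direct check on trigonometric polynomials (which are dense) shows $R(\lambda) = (\lambda + \mathcal{A}_b)^{-1}$, and the $E$-valued analog of Lemma~\ref{Lem:Ctoh} transfers invertibility to $\mathcal{L}(h^\alpha(\Tone,E), h^{2m+\alpha}(\Tone,E))$.

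For the resolvent decay I would repeat the argument with the symbol $\lambda\, \tilde M_k(\lambda)$ and exponents $r = s = \sigma$, checking $s_1, s_2, s_3$ exactly as above but carrying one extra factor of $\lambda$; the elementary inequalities $|\lambda|/(k^{2m} + |\lambda|) \leq 1$ and $|\lambda|/(t^{2m}+|\lambda|)^{j+1} \leq t^{-2mj}$ for $j = 1,2$ absorb that factor and again produce bounds depending only on $\omega, c_1, c_2, m$. Combining both families of estimates, $-\mathcal{A}_b$ satisfies the hypotheses of \cite[Theorem I.1.2.2]{AM95}: it generates an analytic semigroup on $h^\alpha(\Tone,E)$, strongly continuous because $h^{2m+\alpha}(\Tone,E) \dhook h^\alpha(\Tone,E)$, and for any $\omega > 0$ one obtains $\kappa = \kappa(\omega,c_1,c_2,m)$ with $\mathcal{A}_b \in \mathcal{H}(h^{2m+\alpha},h^\alpha,\kappa,\omega)$ by taking $\kappa$ larger than twice the resolvent-bound constants and $1 \vee c_2$, exactly as in the proof of Theorem~\ref{Thm:GenerationWithConstant}. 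The main obstacle is the verification of $s_3$: this term is absent in the scalar setting, so one must produce new estimates for the second difference of the operator-valued resolvent symbol, taking care both of the noncommutativity of the operator products in $g''$ and of the finitely many small-$|k|$ terms where the smooth-function bound degenerates. Everything else is a routine transcription of the scalar proof into the $E$-valued language, with $|\cdot|$ replaced by operator norms and the condition $\re b \geq c_1$ replaced by the sectoriality furnished by \eqref{Eqn:NormallyElliptic}.
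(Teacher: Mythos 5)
Your proposal is correct and follows essentially the same route as the paper: realize $-\mathcal{A}_b$ as the multiplier with symbol $(-k^{2m}b)_k$, use normal ellipticity (via $\|(\lambda+k^{2m}b)^{-1}\| \le c_1/(k^{2m}+|\lambda|)$ for $\re\,\lambda \ge \omega$) to verify $s_1,s_2,s_3$ for the symbols $(\lambda+k^{2m}b)^{-1}$ and $\lambda(\lambda+k^{2m}b)^{-1}$, apply Theorem~\ref{Thm:BanachFourierMultipliers} with $p=q=\infty$, transfer to the little-H\"older scale, and conclude via \cite[Theorem I.1.2.2]{AM95}. The only variation is cosmetic: you bound the second difference $s_3$ by an integral-remainder estimate on $g''$ for $g(t)=(\lambda+t^{2m}b)^{-1}$, whereas the paper expands the second difference algebraically into a product of three resolvents, and both yield bounds depending only on $\omega, c_1, c_2, m$, with the small-$|k|$ terms handled separately in either case.
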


\begin{proof}
The proof of this result follows the same method used to prove Theorem~\ref{Thm:GenerationWithConstant},
however, in this vector-valued setting, we must derive bounds for the term $s_3$ before applying our Fourier multiplier result,
Theorem~\ref{Thm:BanachFourierMultipliers} in this case.
Fix $\alpha \in \mathbb{R}_+ \setminus \mathbb{Z}$, $\omega > 0$ and $b \in \mathcal{L}(E)$ as indicated.
Notice that $-\mathcal{A}_b$ is now associated with the operator-valued multiplier symbol
$\left( M_k \right)_k := \left( -k^{2m} b \right)_k \subset \mathcal{L}(E)$.
Now, we can make formally identical claims to those stated in the scalar-valued setting.

\noindent {\bf Claim 1:} \emph{ $(\lambda + \mathcal{A}_b) \in \mathcal{L}_{isom} (h^{2m + \alpha}(\Tone, E), h^{\alpha}(\Tone, E))$ for $\re \, \lambda \geq \omega$, i.e. 
\[
\rho(-\mathcal{A}_b) \supset \{ \lambda \in \mathbb{C}: \re \, \lambda \geq \omega \}.
\]
Moreover, the set $\{ \| (\lambda + \mathcal{A}_b)^{-1} \|_{\mathcal{L}(h^{\alpha},h^{2m + \alpha})}: \re \, \lambda \geq \omega \}$ is uniformly bounded by some $M_1 = M_1(\omega, c_1, c_2, m) < \infty$.}

Let $\lambda$ be fixed with $\re \, \lambda \ge \omega$. 
The fact that $\mathcal{A}_b \in \mathcal{L}(C^{2m + \sigma}(\Tone), C^{\sigma}(\Tone))$ follows in the same way as the scalar case
with $\| \lambda + \mathcal{A}_b \| \le (c(\sigma)|\lambda| + c_2)$. 
Now consider the symbol $\left( \tilde{M}_k(\lambda) \right)_k := \Big( (\lambda + k^{2m}b)^{-1} \Big)_k$,
where the condition of normal ellipticity guarantees that $\re \, \lambda \ge 0$ is sufficient to see that
$\lambda \in \rho(\sigma\mathcal{A}_b(x,\xi))$. Moreover, in the constant coefficient case, it follows
that $\sigma\mathcal{A}_b(x,\xi) \equiv b$ for $|\xi|=1$. Now, the second condition of normal ellipticity, \eqref{Eqn:NormallyElliptic},
gives adequate flexibility to see that $\frac{\lambda}{k^{2m}} \in \rho(b)$ and we conclude that 
$\tilde{M}_k(\lambda) \in \mathcal{L}(E)$ is well-defined, for $k \in \mathbb{Z}$. Further, notice that
\[
\tilde{M}_k(\lambda) := \left( \lambda + k^{2m}b \right)^{-1} = k^{-2m} \left( \frac{\lambda}{k^{2m}} + b \right)^{-1} \qquad k \not= 0,
\]
and we make use of the second expression in verifying the conditions of the Fourier multiplier theorem.

Using the resolvent bounds given in the normal ellipticity definition, we see that, concerning the symbol
$\left( \tilde{M}_k(\lambda) \right)_k$, we have
\[
s_1 \le c_1 < \infty, \qquad
s_2 \le \left( \frac{c_1 c_2}{\omega} \vee c_2 \right) \sup_{k \in \mathbb{Z} \setminus \{ -1 \}} \left( \frac{|k||(k + 1)^{2m} - k^{2m}|}{|k+1|^{2m}} \right) < \infty.
\]
Meanwhile, notice that, for $k \not= \pm 1$, 
\begin{align*}
|k&|^{2m + 2} \left\| (\lambda + (k + 1)^{2m} b)^{-1} - 2(\lambda + k^{2m}b)^{-1} + (\lambda + (k - 1)^{-1}b)^{-1} \right\|\\
&\le \left\| \left( \frac{\lambda}{(k + 1)^{2m}} + b \right)^{-1} \right\| 
		 \left\| \left( \frac{\lambda}{k^{2m}} + b \right)^{-1} \right\| 
		 \left\| \left( \frac{\lambda}{(k - 1)^{2m}} + b \right)^{-1} \right\|\\
&\hspace{1.5em}  	 \left( \frac{|k|^2}{|k + 1|^{2m} |k - 1|^{2m}} \right)
		 \bigg[ \left\| -\lambda \left( (k + 1)^{2m} - 2k^{2m} + (k - 1)^{2m} \right) b \right\|\\  
&\hspace{2.5em} + \left\| \left( (k + 1)^{2m} (k^{2m} - (k - 1)^{2m}) + (k - 1)^{2m} ( (k + 1)^{2m} - k^{2m}) \right) b^2 \right\| \bigg]\\
&\le \frac{c_1^3 c_2}{(1 + | \lambda |)^3} \Big( |\lambda| + c_2 \Big) \mathcal{K}_1(k) \le c_1^3 c_2 (1 + c_2) \mathcal{K}_1(k),
\end{align*}
where $\mathcal{K}_1$ is a bounded function in $k$. Similarly, in case $k = \pm 1$, we see that 
\[
\left\| (\lambda)^{-1} - 2(\lambda + b)^{-1} + (\lambda + 2^{2m}b)^{-1} \right\| \le 2^{2m} c_1^2 c_2 (1 + c_2).
\]
Hence, it follows that $s_3 < \infty$, and bounded by terms which only depend upon $\omega, m,  c_1,$ and $c_2$ and we 
can apply Theorem~\ref{Thm:BanachFourierMultipliers} to prove the claim. We again see that the operator $R(\lambda)$ 
associated with the symbol $\left( \tilde{M}_k(\lambda) \right)_k$ coincides with the inverse of $(\lambda + \mathcal{A}_b)$.

\noindent {\bf Claim 2:} \emph{ $\lambda (\lambda + \mathcal{A}_b)^{-1} \in \mathcal{L}(h^{\alpha}(\Tone, E))$ for $\re \, \lambda \geq \omega,$ and there is an upper bound $M_2 = M_2(\omega, c_1, c_2, m) < \infty$ for the set $\left\{ | \lambda | \|(\lambda + \mathcal{A}_b)^{-1} \|_{\mathcal{L}(h^{\alpha})} : \re \, \lambda \geq \omega \right\}$.}

This claim is verified by applying the same techniques as above to the symbol $\left( \lambda (\lambda + k^{2m}b)^{-1} \right)_k$.
Working through the details, one verifies that the $s_i$ terms, $i = 1, 2, 3,$ are bounded exactly the same as in Claim 1 above.
Hence, the desired result holds, and the proof of the theorem proceeds exactly as in the scalar-valued setting.

\end{proof}

\subsection{Semigroup Generation and Maximal Regularity}

We conclude the paper with statements of our main results in the setting of vector-valued functions. Their proofs are obtained
by direct application of the methods employed in the scalar-valued setting, with only minor changes of notation and definitions,
which have already been addressed in the preceding parts of this section.

\begin{thm}\label{BanachFullResult}
Let $E$ be a Banach space, $m \in \mathbb{N}, \; \alpha \in \mathbb{R}_+ \setminus \mathbb{Z}, b_k \in h^{\alpha}(\Tone, \mathcal{L}(E)),$ for $k = 0, \ldots, 2m$, and suppose the operator $\mathcal{A} := \mathcal{A}(\cdot,D) := \sum_{k = 0}^{2m} b_k(\cdot) D^k$ is normally elliptic. Then
\[
\mathcal{A} \in \mathcal{H} ( h^{2m + \alpha}(\Tone, E), h^{\alpha}(\Tone, E)).
\]
\end{thm}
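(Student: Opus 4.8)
The plan is to transcribe the scalar-valued development of Section~\ref{Section:PartitionAndGeneration} into the $E$-valued setting, using the already-established constant-coefficient result Theorem~\ref{Thm:BanachGenerationWithConstant} in place of Theorem~\ref{Thm:GenerationWithConstant}. Thus the argument has two stages: first prove the vector-valued analog of Lemma~\ref{MainResult} for the principal part $\mathcal{A}_p := b_{\,2m}(\cdot)D^{2m}$ with $b_{\,2m}\in h^{\alpha}(\Tone,\mathcal{L}(E))$ normally elliptic, obtaining $\mathcal{A}_p\in\mathcal{H}(h^{2m+\alpha}(\Tone,E),h^{\alpha}(\Tone,E))$; then peel off the lower-order terms $b_k(\cdot)D^k$, $k<2m$, by a relative-boundedness perturbation exactly as in the proof of Theorem~\ref{FullResult}.

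For the first stage I would check that every component of the localization/partition argument survives the replacement of $\mathbb{C}$ by $\mathcal{L}(E)$. The geometric lemmas (Proposition~\ref{Prop:IntrinsicDef}, Proposition~\ref{Prop:RisLipschitz}) and the coefficient-localization estimate Lemma~\ref{Lem:PerturbedBhAlpha} use only the metric structure of $\Tone$, completeness of the target, and submultiplicativity of the operator norm, so they and the resulting uniform smallness $\sup_{j}\|b_{x_j,\varepsilon_0}\|_{h^{\alpha}(\Tone,\mathcal{L}(E))}\to 0$ as $\varepsilon_0\to0^+$ hold verbatim with $|\cdot|$ replaced by $\|\cdot\|_{\mathcal{L}(E)}$. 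Since pointwise multiplication by an element of $h^{\alpha}(\Tone,\mathcal{L}(E))$ is a bounded map $h^{2m+\alpha}(\Tone,E)\to h^{\alpha}(\Tone,E)$ with norm dominated by the $h^{\alpha}$-norm of the coefficient --- the module analog of the Banach-algebra estimate used in the scalar proof --- the localized operator $\mathcal{A}_j:=[b_{\,2m}(x_j)+b_{x_j,\varepsilon_0}(\cdot)]D^{2m}$ is, for $\varepsilon_0$ small and a fixed $\eta\in(0,1/\kappa)$, a small perturbation of the constant-coefficient operator $b_{\,2m}(x_j)D^{2m}$; the latter lies in $\mathcal{H}(h^{2m+\alpha}(\Tone,E),h^{\alpha}(\Tone,E),\kappa,\omega)$ with $\kappa,\omega$ independent of $j$ by Theorem~\ref{Thm:BanachGenerationWithConstant}, because $b_{\,2m}(x_j)=\sigma\mathcal{A}_p(x_j,1)$ inherits the resolvent bound \eqref{Eqn:NormallyElliptic}. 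Hence $\mathcal{A}_j\in\mathcal{H}(h^{2m+\alpha}(\Tone,E),h^{\alpha}(\Tone,E),\kappa/(1-\kappa\eta),\omega)$ by \cite[Theorem 1.3.1(i)]{AM95}. The remaining machinery --- the retraction/coretraction pair built from a resolution of unity, the operators $\Lambda$, $\mathcal{B}$, $\mathcal{D}$, $\mathcal{C}(\lambda)$, the identity $B_j=[\pi_j,\mathcal{A}_p]$ together with the order reduction $B_j\in\mathcal{L}(h^{2m-1+\alpha},h^{\alpha})$ (which persists since $\pi_j$ is $\mathbb{C}$-valued and all top-order contributions still cancel), and the two Neumann-series arguments producing a left and a right inverse of $\lambda+\mathcal{A}_p$ for $\re\lambda$ large --- is purely functional-analytic and carries over unchanged; the only analytic input beyond Theorem~\ref{Thm:BanachGenerationWithConstant} is the exact interpolation identity, i.e. the $E$-valued form of Proposition~\ref{Thm:hAlphaCAlphaClosure}(b), available because that proposition is a special case of Lunardi's vector-valued results. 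The reduction to $\alpha\in(0,1)$ by differentiating the resolvent equation and inducting on $\alpha$ (step (i) of the proof of Lemma~\ref{MainResult}) likewise uses only the module property and transfers directly.

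For the second stage I would argue as in the proof of Theorem~\ref{FullResult}. Starting from $\mathcal{A}_p\in\mathcal{H}(h^{2m+\alpha}(\Tone,E),h^{\alpha}(\Tone,E),\kappa,\omega)$, fix $0<\eta<1/\kappa$ and add the terms $b_k(\cdot)D^k$ for $k=2m-1,\dots,0$ one at a time: the module bound $\|b_k(\cdot)D^kf\|_{h^{\alpha}}\le\|b_k\|_{h^{\alpha}}\,\|f\|_{h^{k+\alpha}}$, the interpolation inequality \cite[Proposition I.2.2.1]{AM95} between $h^{\alpha}(\Tone,E)$ and $h^{2m+\alpha}(\Tone,E)$, and Young's inequality show each $b_k(\cdot)D^k$ is relatively bounded with arbitrarily small relative bound, so \cite[Theorem 1.3.1(ii)]{AM95} keeps the running sum in the class $\mathcal{H}$; after $2m$ steps one obtains $\mathcal{A}\in\mathcal{H}(h^{2m+\alpha}(\Tone,E),h^{\alpha}(\Tone,E))$, as claimed.

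The step that will demand the most care --- rather than any new idea --- is confirming that the localization of Section~\ref{Section:PartitionAndGeneration} is genuinely insensitive to the passage from scalar to operator-valued coefficients: one must verify that Lemma~\ref{Lem:PerturbedBhAlpha} and the uniform smallness of the localized coefficients hold with $\|\cdot\|_{\mathcal{L}(E)}$ in place of $|\cdot|$ (the proofs invoke only the triangle inequality and the translation-invariant metric $\dT$, so this goes through, but it should be recorded), and that $[\pi_j,\mathcal{A}_p]$ still loses one order when $b_{\,2m}$ is $\mathcal{L}(E)$-valued (immediate, since $\pi_j$ is $\mathbb{C}$-valued and commutes with $b_{\,2m}$). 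No genuinely new phenomenon appears, because the operator-valued Fourier-multiplier subtleties --- in particular the extra second-difference condition $s_3$ of Theorem~\ref{Thm:BanachFourierMultipliers} --- were already absorbed into the proof of Theorem~\ref{Thm:BanachGenerationWithConstant}.
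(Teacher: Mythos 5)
Your proposal is correct and takes essentially the same route the paper intends: the paper proves only the constant-coefficient vector-valued case (Theorem~\ref{Thm:BanachGenerationWithConstant}) in detail and then obtains Theorem~\ref{BanachFullResult} by transcribing the scalar localization/partition argument of Lemma~\ref{MainResult} and the lower-order perturbation step of Theorem~\ref{FullResult} to the $E$-valued setting, which is exactly your two-stage plan. The points you single out for verification --- the module property of multiplication by $h^{\alpha}(\Tone,\mathcal{L}(E))$-coefficients, the uniformity in $j$ of the constants from normal ellipticity, the order reduction in the commutators $[\pi_j,\mathcal{A}_p]$, and the vector-valued interpolation results --- are precisely the ``minor changes of notation and definitions'' the paper alludes to, and they go through as you describe.
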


\noindent Now, fix $\mu \in (0,1]$ and define the spaces 
\[
\begin{split}
&\mathbb{E}_0(J) := BUC_{1 - \mu}(J,h^{\alpha}(\Tone, E)), \qquad \alpha \in \mathbb{R}_+ \setminus \mathbb{Z}\\
&\mathbb{E}_1(J) := BUC^1_{1 - \mu}(J,h^{\alpha}(\Tone, E)) \cap BUC_{1-\mu}(J,h^{2m + \alpha}(\Tone, E)).
\end{split}
\] 
Then we get the maximal regularity result.

\begin{thm}\label{Thm:BanachExistenceAndUniqueness}
Fix a Banach space $E$, $\alpha \in \mathbb{R}_+ \setminus \mathbb{Z}$, $m \in \mathbb{N}$, $\mu \in (0,1]$ and $J = [0,T]$, for $T > 0$ arbitrary. Suppose the operator $\mathcal{A} := \mathcal{A}(\cdot,D) = \sum_{k \leq 2m} b_k(\cdot) D^k$, with coefficients $b_k \in h^{\alpha}(\Tone, \mathcal{L}(E))$ is normally elliptic, as in \eqref{Eqn:NormallyElliptic}. Then
\[
\left( \frac{d}{dt} + \mathcal{A}, \, \gamma \right) \in \mathcal{L}_{isom}(\mathbb{E}_1(J), \mathbb{E}_0(J) \times \gamma \mathbb{E}_1).
\]
In particular, given any pair $(f, u_0) \in BU\!C_{1 - \mu}(J, h^{\alpha}(\Tone, E)) \times \gamma \mathbb{E}_1$, there exists a unique solution $u \in BU\!C_{1 - \mu}^1 (J, h^{\alpha}(\Tone, E)) \cap BU\!C_{1 - \mu}(J, h^{2m + \alpha}(\Tone, E))$ to the inhomogeneous Cauchy problem \eqref{Eqn:CP} on $J$.
\end{thm}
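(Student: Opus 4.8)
The plan is to reduce the statement to the abstract maximal regularity theorem of DaPrato--Grisvard--Angenent (Theorem~\ref{Thm:DaPratoGrisvard}) in exactly the way the proof of Theorem~\ref{Thm:ExistenceAndUniqueness} does, the only new ingredient being the vector-valued generation result Theorem~\ref{BanachFullResult} in place of Theorem~\ref{FullResult}. First I would fix an auxiliary exponent $\beta \in \mathbb{R}_+ \setminus \mathbb{Z}$ with $\beta < \alpha < 2m + \beta$ and set $\eta := (\alpha - \beta)/2m$, so that $\eta \in (0,1)$ and $2m\eta + \beta = \alpha \notin \mathbb{Z}$. Since $b_k \in h^{\alpha}(\Tone, \mathcal{L}(E)) \hookrightarrow h^{\beta}(\Tone, \mathcal{L}(E))$ by the vector-valued form of Proposition~\ref{Thm:hAlphaCAlphaClosure}(a), the operator $\mathcal{A}$ is realized as a normally elliptic operator at regularity level $\beta$, hence $\mathcal{A} \in \mathcal{H}(h^{2m + \beta}(\Tone, E), h^{\beta}(\Tone, E))$ by Theorem~\ref{BanachFullResult}.

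Next I would build the spaces appearing in Theorem~\ref{Thm:DaPratoGrisvard} with $E_0 := h^{\beta}(\Tone, E)$, $E_1 := h^{2m + \beta}(\Tone, E)$, and $A := \mathcal{A}$: set $E_2 := (D(\mathcal{A}^2), \|\cdot\|_2)$ with the graph norm, $E_\eta := (E_0, E_1)_\eta$, and $E_{1+\eta} := (E_1, E_2)_\eta$. The density $E_1 \dhook E_0$ needed to invoke Theorem~\ref{Thm:DaPratoGrisvard} is again Proposition~\ref{Thm:hAlphaCAlphaClosure}(a) in the vector-valued setting, and the interpolation identity Proposition~\ref{Thm:hAlphaCAlphaClosure}(b) gives $E_\eta = (h^\beta(\Tone, E), h^{2m+\beta}(\Tone, E))_\eta = h^{2m\eta + \beta}(\Tone, E) = h^\alpha(\Tone, E)$. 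Then Theorem~\ref{Thm:DaPratoGrisvard} yields that $\big( BUC_{1-\mu}(J, E_\eta), BUC^1_{1-\mu}(J, E_\eta) \cap BUC_{1-\mu}(J, E_{1+\eta}) \big)$ is a pair of maximal regularity for $\mathcal{A}_\eta$, and $\mathcal{A}_\eta \in \mathcal{H}(E_{1+\eta}, E_\eta)$ by the remark following that theorem.

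The remaining --- and only genuinely delicate --- point is to identify $E_{1+\eta}$ with $h^{2m+\alpha}(\Tone, E)$, since $E_2 = D(\mathcal{A}^2)$ depends on $\mathcal{A}$ and is not directly characterizable under our minimal regularity hypotheses on the $b_k$. Here I would argue exactly as in the scalar case: by Theorem~\ref{BanachFullResult} at regularity level $\alpha$ we also have $\mathcal{A} \in \mathcal{H}(h^{2m+\alpha}(\Tone, E), h^\alpha(\Tone, E))$, while $\mathcal{A} = \mathcal{A}_\eta \in \mathcal{H}(E_{1+\eta}, h^\alpha(\Tone, E))$ from the previous step; choosing $\omega > 0$ large enough that $\omega + \mathcal{A}$ is simultaneously an isomorphism $h^{2m+\alpha}(\Tone, E) \to h^\alpha(\Tone, E)$ and $E_{1+\eta} \to h^\alpha(\Tone, E)$, the composition $(\omega + \mathcal{A})^{-1} \circ (\omega + \mathcal{A})$ furnishes, by commutativity of $\mathcal{A}$ with itself, an isometric isomorphism of $h^{2m+\alpha}(\Tone, E)$ onto $E_{1+\eta}$, so the two coincide up to equivalent norms. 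Consequently $\big( \mathbb{E}_0(J), \mathbb{E}_1(J) \big)$ is a pair of continuous maximal regularity for $\mathcal{A}$, which is precisely the asserted isomorphism $\big( \frac{d}{dt} + \mathcal{A}, \gamma \big) \in \mathcal{L}_{isom}(\mathbb{E}_1(J), \mathbb{E}_0(J) \times \gamma\mathbb{E}_1)$, and existence and uniqueness of the stated solution to \eqref{Eqn:CP} follow. The main obstacle is thus not any single hard estimate but the bookkeeping needed to transfer Proposition~\ref{Thm:hAlphaCAlphaClosure} and the DaPrato--Grisvard--Angenent framework to $E$-valued functions; since the paper has already recorded that these carry over verbatim, the proof is essentially a restatement of the proof of Theorem~\ref{Thm:ExistenceAndUniqueness}.
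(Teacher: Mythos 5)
Your proposal is correct and follows exactly the route the paper takes: the paper proves this theorem by declaring that the scalar argument of Theorem~\ref{Thm:ExistenceAndUniqueness} carries over verbatim with $E$-valued spaces and Theorem~\ref{BanachFullResult} in place of Theorem~\ref{FullResult}, which is precisely the reduction (auxiliary exponent $\beta$, interpolation identification of $E_\eta$, and the $(\omega+\mathcal{A})$-isomorphism trick to identify $E_{1+\eta}$ with $h^{2m+\alpha}(\Tone,E)$) that you spell out.
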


\section*{Acknowledgements}

I would like to express my gratitude to my advisor, Prof. Gieri Simonett,
who has provided many helpful discussions, encouraging words and a critical eye
during the preparation and revisions of this paper. 
I would also like to thank Mathias Wilke and 
Jan Pr$\ddot{\rm u}$ss for their input and advice in the preparation of this paper.
Thank you also to the anonymous referees for helpful suggestions.


\begin{thebibliography}{99}

\bibitem{Am93} H. Amann,
{\em Nonhomogeneous linear and quasilinear elliptic and parabolic boundary value problems.}
 Function spaces, differential operators and nonlinear analysis (Friedrichroda, 1992), 
 9-Ð126, Teubner-Texte Math., 133, Teubner, Stuttgart, 1993,

\bibitem{AM95} H. Amann, 
{\em Linear and Quasilinear Parabolic Problems,} Monographs in Mathematics vol. 89.
Birkh\"auser Verlag, (1995).

\bibitem{AM97} H. Amann,
Operator-valued Fourier multipliers, vector-valued Besov spaces, and applications.
{\em Math. Nachr.} {\bf 186} (1997), 5--56.

\bibitem{AM01} H. Amann,
Elliptic operators with infinite-dimensional state spaces.
{\em J. Evol. Equ.} {\bf 1} (2001), no. 2, 143--188.

\bibitem{Am05} H. Amann,
 Quasilinear parabolic problems via maximal regularity. 
{\em Adv. Differential Equations} {\bf 10} (2005), no. 10, 1081-Ð1110. 

\bibitem{AMHS94} H. Amann, M. Hieber, G. Simonett,
Bounded $H_{\infty}$-Calculus for Elliptic Operators.
{\em Differential Integral Equations} {\bf 7} (1994), 613--653.

\bibitem{AN90} S.B. Angenent,
Nonlinear analytic semiflows.
{\em Proc. of the Royal Soc. of Edinburgh} {\bf 115A} (1990), 91--107.

\bibitem{AB02} W. Arendt, S. Bu,
The operator-valued Marcinkiewicz multiplier theorem and maximal regularity.
{\em Math. Z.} {\bf 240} (2002), 311--343.

\bibitem{AB04} W. Arendt, S. Bu,
Operator-valued Fourier multipliers on periodic Besov spaces and applications.
{\em Proc. of the Edinburgh Math. Soc.} {\bf 47} (2004), 15--33.



\bibitem{BBW98} A.J. Bernoff, A.L. Bertozzi, T.P. Witelski, 
Axisymmetric surface diffusion: dynamics and stability of self-similar pinchoff. 
{\em J. Statist. Phys.} {\bf 93} (1998), no. 3-4, 725--776. 


\bibitem{CS01} Ph. Cl\'ement, G. Simonett,
Maximal regularity in continuous interpolation spaces and quasilinear parabolic equations. 
{\em J. Evol. Equ.} {\bf 1 }  (2001),  no. 1, 39--67. 

\bibitem{DPG79} G. Da Prato, P. Grisvard,
Equations d'évolution abstraites non linéaires de type parabolique. 
{\em Ann. Mat. Pura Appl. (4)} {\bf 120 }  (1979), 329--396.

\bibitem{EDW82} R.E. Edwards,
{\em Fourier Series: A Modern Introduction. Volume 2.}
Springer-Verlag, (1982).

\bibitem{EM08} J. Escher, B.V. Matioc,
On periodic Stokesian Hele-Shaw flows with surface tension.
{\em European J. Appl. Math.} {\bf 19} (2008), 717--734.


\bibitem{EMS98} J. Escher, U.F. Mayer, G. Simonett,
The surface diffusion flow for immersed hypersurfaces.  
{\em SIAM J. Math. Anal.} {\bf 29}  (1998),  no. 6, 1419--1433.

\bibitem{FAT83} H.O. Fattorini,
The Cauchy Problem, Encyclopedia of Mathematics and its Applications, 18.
Wesley Publishing Co., (1983).

\bibitem{GW03} M. Girardi, L. Weis,
Operator-valued Fourier multiplier theorems on Besov spaces.
{\em Math. Nachr.} {\bf 251} (2003), 34--51.


\bibitem{KPW10} M. K\"ohne, J. Pr\"uss, M. Wilke, 
On quasilinear parabolic evolution equations in weighted $L_p$-spaces. 
{\em J. Evol. Equ. } {\bf 10} (2010), no. 2, 443Ð-463. 

\bibitem{MAT09} B.V. Matioc,
Viscous Fingering Patterns in Mathematical Fluid Dynamics via Bifurcation. 
(PhD Dissertation) Gottfried Wilhelm Leibniz Universit$\ddot{\rm a}$t Hannover, 2009.

\bibitem{LUN95} A. Lunardi,
{\em Analytic Semigroups and Optimal Regularity in Parabolic Problems.}
Birkh\"auser Verlag, (1995).

\bibitem{Pru03} J.~Pr\"uss,
Maximal regularity for evolution equations in $L_p$-spaces.
{\em Conf. Semin. Mat. Univ. Bari} {\bf 285} (2003), 1--39.

\bibitem{PSZ09} J. Pr\"uss, G. Simonett, R. Zacher,  
On convergence of solutions to equilibria for quasilinear parabolic problems. 
{\em J. Differential Equations} {\bf 246} (2009), no. 10, 3902Ð-3931.





\bibitem{ST87} H.J. Schmeisser, H. Triebel,
{\em Topics in Fourier Analysis and Function Spaces.}
John Wiley \& Sons, (1987).

\bibitem{Sim95}
 G. Simonett, Center manifolds for quasilinear reaction-diffusion systems.
 {\em Differential Integral Equations} {\bf 8} (1995), 753--796.





\end{thebibliography}
\end{document}